\titlespacing{\section}{0pt}{2ex}{1ex}
    \titlespacing{\subsection}{0pt}{1ex}{0ex}
    \titlespacing{\subsubsection}{0pt}{0.5ex}{0ex}
\xpatchcmd{\proof}{\topsep0\p@\@plus0\p@\relax}{}{}{}
\titlespacing{\section}{0pt}{1ex}{1ex}
\title{\bf Maximum information divergence from \\linear and toric models}
\author[1]{Yulia Alexandr}
\author[2]{Serkan Ho\c{s}ten}
\affil[1]{University of California, Berkeley}
\affil[2]{San Francisco State University}
\date{}
\begin{document}
\maketitle
\begin{abstract}
We study the problem of maximizing information divergence from a new perspective using logarithmic Voronoi polytopes. We show that for linear models, the maximum is always achieved at the boundary of the probability simplex.  For toric models, we present an algorithm that combines the combinatorics of the chamber complex with numerical algebraic geometry. We pay special attention to reducible models and models of maximum likelihood degree one.

    
\end{abstract}
\section{Introduction} \label{introduction}
Let $\M \subset \Delta_{n-1}$ be a statistical model 
where 
$$ \Delta_{n-1} \, = \, \left\{ p = (p_1, \ldots, p_n) \, : \, \sum_{i=1}^n p_i =1 \mbox{ and } p_i \geq 0, \, i=1, \ldots n \right\} $$
is the probability simplex of dimension $n-1$. Given two points $p,q\in\Delta_{n-1}$ with $\supp(p) \subseteq \supp(q)$, the information divergence or Kullback-Leibler divergence of $p$ and $q$ is defined as $$D(p\parallel q) \, := \, \sum_{i=1}^n p_i\log \left(\frac{p_i}{q_i} \right).$$ 
We use the convention that $0 \log 0 = 0 \log(0/0) = 0$ and $D(p \parallel q) = +\infty$ if $\supp(p) \not \subseteq \supp(q)$. For fixed $q$, the function $D(\cdot \parallel q)$
is strictly convex. The information divergence (or just divergence) from $p \in \Delta_{n-1}$ to $\M$
is 
$$ D_\M(p) \, := \, \min_{q \in \M} \, D(p \parallel q).$$ In this paper, we study $D(\M) = \max_{p \in \Delta_{n-1}} \, D_\M(p)$ and the points which achieve
$D(\M)$ when $\M$ is a linear or a discrete exponential (toric) model. 

\subsection{Related prior work}
The problem of determining $D(\E)$ and studying the maximizers of the divergence function from an exponential family $\E \subset \Delta_{n-1}$ was first posed by Ay \cite{Ay02} who computed the gradient of $D_\E(p)$. The exponential family $\M$  of probability distributions of independent random variables $X_i$, $i=1, \ldots, m$ with state spaces
$[d_i] := \{1, \ldots, d_i\}$ is known as an independence model. In this case, $D_\M(p)$ is the multi-information, and $D(\M) \leq \sum_{i=1}^{m-1} \log(d_i)$ where $2 \leq d_1 \leq d_2 \leq \cdots \leq d_m$~\cite{AK06}. In the same work, the structure of the global maximizers of the multi-information when the above bound is achieved was also determined. Subsequently, Mat\'{u}\v{s} has computed the optimality conditions
for $D_\E(p)$ for any exponential model $\E \subset \Delta_{n-1}$ \cite{Mat07}. We will use these conditions heavily.
Rauh's dissertation \cite{RauhThesis} as well
as his work in \cite{Rauh11} gave algorithms
to compute $D(\M)$ for a discrete exponential family $\M$. These algorithms have two components: a combinatorial step followed by an algebraic step, both of which can be challenging. Nevertheless, they were capable of computing the maximum multi-information to an independence model with $d_1=2$ and
$d_2=d_3=3$, the smallest case where the aforementioned bound is not attained. We will provide another algorithm in
the same spirit with combinatorial and algebraic steps. Finally, the literature contains results on
the maximum divergence from certain hierarchical models \cite{Mat09}, partition models \cite{Rauh13}, naive Bayes models and restricted Boltzmann machines \cite{MRA13}.

\subsection{Preliminaries and summary of results}
Let $\mathfrak{X}$ be a finite set of cardinality $n$
and let $A$ be a $d \times n$ matrix with entries in $\RR$. With respect to the reference measure $\omega(x), \, x \in \mathfrak{X}$, the exponential family $\E = \E_{\omega,A}$ consists of the positive probability
distributions in $\Delta_{n-1}$ of the form
$$ P_{\theta}(x) = \frac{\omega(x)}{Z_{\theta}} \exp{ \left( \sum_{i=1}^d \theta_i A_{i,x} \right)},$$
where $A_i$ is the $i$th row of $A$ and $Z_\theta$ is 
the normalizing constant. Here $\theta_i \in \RR$ and $\overline{\E}$, the Euclidean closure of $\E$ in $\Delta_{n-1}$, will be referred to as the extended exponential family.  Usually we will identify 
$\mathfrak{X}$ with $[n]$ and write $p_i$ and $\omega_i$ instead of $P(x)$ and $\omega(x)$, respectively. 

In this paper, we consider {\it discrete} exponential families because of the bridge to toric geometry and algebraic statistics \cite{DrtonSturmfelsSullivant2009LecturesOnAlgebraicStatistics, AlgStatBook}. This means that $A$ is a matrix with {\it integer} entries. Since without loss of generality we can 
assume that the row span of $A$ 
$$ A = \left( \begin{array}{cccc} 
        a_1 & a_2 & \cdots & a_n \end{array} \right)$$
contains $(1,1,\ldots, 1)$,         
we will take  the columns $a_j \in \NN^d$, $j=1, \ldots, n$ and fix the first row of $A$ to be the row of all ones.
The toric variety $X_{\omega, A}$ is the Zariski closure in $\CC^{n}$ of the image of the algebraic torus
$(\CC^*)^d$ under the monomial map
$\Psi \, : \, (\CC^*)^d \longrightarrow \CC^n$ given by 
$$ z = (z_1, \ldots, z_d) \mapsto (\omega_1 z^{a_1}, \, \omega_2 z^{a_2}, \, \ldots, \, \omega_n z^{a_n}).$$
Because of the assumption on the first row of $A$, we can also view $X_{\omega,A}$ as a toric variety in the projective space $\PP^{n-1}$.  The following theorem connects exponential families and toric varieties.

\begin{theorem}\label{thm:discrete=toric} \cite[Theorem 3.2]{GMS06} 
The extended exponential family $\overline{\E}_{\omega, A}$ is equal to $X_{\omega, A} \cap \Delta_{n-1}$.
\end{theorem}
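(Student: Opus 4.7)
The plan is to prove the two set-theoretic inclusions separately. The forward inclusion $\overline{\E}_{\omega,A} \subseteq X_{\omega,A}\cap\Delta_{n-1}$ is essentially formal: for any $\theta\in\RR^d$, setting $z_i = e^{\theta_i} \in \RR_{>0}$ shows that $P_\theta = \frac{1}{Z_\theta}\Psi(z)$, so $P_\theta$ is a positive real scalar multiple of a point in the image of $\Psi$, hence lies in $X_{\omega,A}$. By construction $P_\theta \in \Delta_{n-1}$, and because the first row of $A$ is $(1,\ldots,1)$ the rescaling by $1/Z_\theta$ respects the projective toric structure. Since $X_{\omega,A}$ is Zariski closed and thus Euclidean closed in $\CC^n$, the intersection $X_{\omega,A}\cap\Delta_{n-1}$ is closed in $\RR^n$, so taking the Euclidean closure of $\E_{\omega,A}$ remains in this set.

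The reverse inclusion is the substantial direction, and I would split it according to the support of $p \in X_{\omega,A}\cap \Delta_{n-1}$. If $\supp(p) = [n]$, I would argue directly that $p \in \E_{\omega,A}$. The key fact is that the strictly positive real locus of an affine toric variety coincides with the image of the monomial parametrization restricted to $\RR^d_{>0}$; equivalently, the ideal of $X_{\omega,A}$ is generated by binomials $\omega^\beta z^\alpha - \omega^\alpha z^\beta$, and over the positive reals such equations can be linearized by taking logarithms, so any positive real solution lifts to some $\theta \in \RR^d$ with $\Psi(e^\theta) = p \cdot Z$ for the appropriate normalizer $Z$.

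For boundary points $p$ with $\sigma := \supp(p) \subsetneq [n]$, I would invoke the orbit-face correspondence for projective toric varieties: the nonnegative real points of $X_{\omega,A}$ stratify according to faces of the Newton polytope $P := \mathrm{conv}(a_1,\ldots,a_n)$, and a nonnegative point with support $\sigma$ lies in $X_{\omega,A}$ if and only if $\{a_j : j \in \sigma\}$ is exactly the set of columns of $A$ lying on some face $F$ of $P$. Granted this, the restricted data $(\omega|_\sigma, A|_\sigma)$ defines a smaller exponential family $\E_{\omega|_\sigma, A|_\sigma}$ on $\Delta_{|\sigma|-1}$ whose toric variety is the face subvariety of $X_{\omega,A}$, and by the interior case applied to this smaller family we obtain $\theta^\ast$ producing $p$ on the face. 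To promote $p$ to a limit of points in $\E_{\omega,A}$ itself, I would choose a linear functional $u \in \RR^d$ maximized on $P$ precisely along $F$ and follow the ray $\theta(t) = \theta^\ast + tu$ as $t\to\infty$: the coordinates $j \notin \sigma$ are exponentially damped by $e^{t(\langle u,a_j\rangle - \max_F\langle u,\cdot\rangle)}$, while the coordinates in $\sigma$ are preserved after renormalization, yielding $P_{\theta(t)} \to p$.

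The main obstacle will be the boundary case, specifically invoking the orbit-face correspondence cleanly and verifying that the limiting procedure along the ray $\theta(t)$ converges to the prescribed boundary point rather than to some other point on the same face. The argument hinges on the fact that a point in $X_{\omega,A}$ with prescribed support is determined by its restriction to the corresponding face subvariety, which one can in turn realize as an exponential family by the inductive (interior) case. An alternative and perhaps more elegant route is to directly parametrize $X_{\omega,A}\cap \Delta_{n-1}$ by the moment map of the projective toric variety and use the standard result that its image is the full polytope $P$, with the preimage of each face being exactly the corresponding boundary stratum of $\overline{\E}_{\omega,A}$.
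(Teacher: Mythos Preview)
The paper does not prove this theorem at all: it is quoted verbatim as \cite[Theorem 3.2]{GMS06} and used as a black box, with no argument given. So there is nothing in the paper to compare your proposal against.

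For what it is worth, your sketch follows the standard route one finds in the cited reference and in the toric geometry literature: the forward inclusion is immediate from the parametrization, and the reverse inclusion is handled by stratifying by support, invoking the orbit--face correspondence for $\conv(A)$, and pushing to the boundary along a ray $\theta(t)=\theta^\ast+tu$ determined by a face-selecting functional $u$. That is exactly the argument in Geiger--Meek--Sturmfels, so if your goal is to reconstruct the cited proof, you are on the right track; the only point to be careful about is verifying that for a nonnegative $p\in X_{\omega,A}$ the support $\sigma$ really is the full set of columns on a face of $\conv(A)$ (not a proper subset of those columns), which follows from the binomial description of the toric ideal but deserves an explicit sentence.
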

Therefore, we will refer to discrete
exponential families as toric models. We will denote them by $\M_{\omega,A}$ or just $\M_A$. 

Given a toric model $\M_A$ and a fixed $u \in \Delta_{n-1}$,  the minimum $D_{\M_A}(u)$ is attained at a unique
point $q \in \M_A$. It is known as the \textit{maximum likelihood estimate} (MLE) of $u$. Birch's Theorem (see \cite[Theorem 4.8]{Lau96}, \cite[Proposition 2.1.5]{DrtonSturmfelsSullivant2009LecturesOnAlgebraicStatistics}, \cite[Theorem 1.10]{ASCB05}) states that 
the maximum likelihood estimate of $u$ is equal 
to the unique point in the intersection $$\M_A \cap \{p\in\Delta_{n-1}: Au=Ap\}.$$
The second term in this intersection is the polytope 
$Q_u := \{p\in\Delta_{n-1}: Au=Ap\}$. If $q \in \M_A$ is the MLE of $u$, by Birch's Theorem $Q_q = Q_u$. We will call this polytope the \textit{logarithmic Voronoi polytope} at $q$ following \cite{AH20logarithmic}.  For an arbitrary model $\M \subset \Delta_{n-1}$ and $q \in \M$, the logarithmic 
Voronoi cell at $q$ consists of all points $u \in 
\Delta_{n-1}$ such that a maximum likelihood estimate of $u$ is $q$. Logarithmic Voronoi cells are always convex sets \cite[Proposition 4]{AH20logarithmic}, and when the model is linear or toric, they are polytopes \cite[Theorem 9-10]{AH20logarithmic}. 
\begin{prop} \label{prop:vertex-does-it}
Let $\M \subset \Delta_{n-1}$ be a linear or a toric model and let $q \in \M$. Then the maximum of $D_\M(u)$ restricted to the logarithmic Voronoi polytope $Q_q$ is achieved at a vertex of $Q_q$. The maximizers are a subset of the vertices in $Q_q$.
\end{prop}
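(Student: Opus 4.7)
The plan is to reduce everything to the strict convexity of the Kullback--Leibler divergence in its first argument. The key observation is that, by the very definition of the logarithmic Voronoi polytope, the nonlinear function $D_\M$ becomes linear-plus-entropy when restricted to $Q_q$.

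First, I would argue that for every $u \in Q_q$, the MLE of $u$ is exactly $q$, so
\[
D_\M(u) \,=\, \min_{p \in \M} D(u \parallel p) \,=\, D(u \parallel q).
\]
For toric models this is immediate from Birch's Theorem: $u \in Q_q$ means $Au = Aq$, hence $q$ is the unique point of $\M_A$ in $\{p \in \Delta_{n-1} : Ap = Au\}$, i.e.\ $q$ is the MLE of $u$. For linear models, $Q_q$ is by definition the set of $u$ whose MLE is $q$, so the equality $D_\M(u) = D(u\parallel q)$ holds tautologically. Either way, maximizing $D_\M$ over $Q_q$ amounts to maximizing the single function $u \mapsto D(u \parallel q)$ over the polytope $Q_q$.

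Next, I would invoke strict convexity. Writing
\[
D(u \parallel q) \,=\, \sum_{i=1}^n u_i \log u_i \,-\, \sum_{i=1}^n u_i \log q_i,
\]
the second sum is linear in $u$, while the first is the (negative) Shannon entropy, which is strictly convex on $\Delta_{n-1}$. Hence $u \mapsto D(u \parallel q)$ is strictly convex on $Q_q$.

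Finally, I would apply the standard fact that a strictly convex function on a (nonempty) compact convex polytope attains its maximum only at vertices: if a maximizer $u^*$ were a proper convex combination $u^* = \lambda v + (1-\lambda) w$ with $v \neq w$ in $Q_q$, strict convexity would yield $D(u^* \parallel q) < \lambda D(v \parallel q) + (1-\lambda) D(w \parallel q) \leq \max\{D(v\parallel q),D(w\parallel q)\}$, contradicting maximality. This gives both statements: the maximum is attained at some vertex, and every maximizer is a vertex. There is no real obstacle here; the only thing to be careful about is the reduction from $D_\M$ to $D(\,\cdot\,\parallel q)$ on $Q_q$, which is handled uniformly by the definition of logarithmic Voronoi cells together with Birch's Theorem in the toric case.
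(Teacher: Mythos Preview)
Your proof is correct and follows essentially the same approach as the paper: reduce $D_\M$ on $Q_q$ to $D(\cdot\parallel q)$ via the definition of the logarithmic Voronoi cell (Birch's Theorem in the toric case), then invoke strict convexity of $D(\cdot\parallel q)$ on the polytope $Q_q$. The paper's proof is a two-line version of exactly this argument, citing strict convexity and the fact that $Q_q$ is a convex polytope; you simply spell out the reduction and the vertex argument in more detail.
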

\begin{proof}
As we observed above $D(u\parallel q)$ is strictly convex in $u$ over $\Delta_{n-1}$; see for instance \cite[Proposition 2.14 (iii)]{RauhThesis}. The result follows since $Q_q$ is a convex polytope. 
\end{proof}
\begin{cor}\cite[Proposition 3.2]{Ay02} \label{cor:support-of-maximizer}
Let $\M_A \subseteq \Delta_{n-1}$ be a toric model
where $A \in \NN^{d \times n}$ and $\mathrm{rank}(A)=~d$. If
$p$ is a maximizer of the information divergence
then $|\supp(p)| \leq d = \dim(\M_A) + 1$. 
\end{cor}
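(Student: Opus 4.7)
The plan is to combine Proposition \ref{prop:vertex-does-it} with a standard dimension count for the logarithmic Voronoi polytope. Let $p$ be a maximizer of $D_{\M_A}$, and let $q \in \M_A$ be its MLE. By Birch's theorem, $p$ lies in the logarithmic Voronoi polytope $Q_q = \{u \in \Delta_{n-1} : Au = Aq\}$. Since $D_\M(u) = D(u \parallel q)$ for every $u \in Q_q$, the maximum of $D_{\M_A}$ restricted to $Q_q$ equals $D_{\M_A}(p)$, so Proposition \ref{prop:vertex-does-it} forces $p$ to be a vertex of $Q_q$.

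Next I would bound the support of a vertex of $Q_q$ by a standard linear-algebra argument. The polytope $Q_q$ sits inside the affine subspace $\{u \in \RR^n : Au = Aq\}$. Because $A$ has rank $d$, this subspace has dimension $n - d$; note that the condition $\sum u_i = 1$ is automatic since the all-ones vector lies in the row span of $A$. A point of $Q_q$ is a vertex precisely when, in addition to the $d$ independent equations $Au = Aq$, sufficiently many inequalities $u_i \geq 0$ are active so that the resulting linear system determines $u$ uniquely. This requires at least $n - d$ of the coordinates of $u$ to vanish.

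Applying this to $p$, we conclude $|\{i : p_i = 0\}| \geq n - d$, so $|\supp(p)| \leq d$. Finally, $\dim(\M_A) = d - 1$ because the toric variety $X_{\omega, A} \subset \PP^{n-1}$ has dimension equal to $\mathrm{rank}(A) - 1$, giving $d = \dim(\M_A) + 1$, as claimed.

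There is no serious obstacle here; the only small point to treat carefully is the book-keeping that $\{Au = Aq\}$ really cuts the ambient $\RR^n$ down by exactly $d$ dimensions (which uses the rank hypothesis and the fact that the probability-simplex equation is already encoded in $A$), so that a vertex of $Q_q$ genuinely forces at least $n - d$ vanishing coordinates.
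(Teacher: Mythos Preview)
Your argument is correct and follows essentially the same route as the paper: both use Proposition~\ref{prop:vertex-does-it} to place $p$ at a vertex of $Q_q$, and then invoke the standard linear-algebra fact that a vertex of a polyhedron in standard form $\{u \ge 0 : Au = b\}$ has at most $\mathrm{rank}(A)$ nonzero coordinates. The paper phrases this last step in the language of basic feasible solutions from linear programming, while you count active inequality constraints directly; these are two ways of stating the same thing.
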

\begin{proof}
If $q \in \M_A$ is the MLE of $p$, then $p$ is a vertex of $Q_q = \{u \in \Delta_{n-1} \, : \, Au = Aq\}$. Any vertex of $Q_q$ is a basic feasible solution to the system $Au = Aq$. In other words, it 
is of the form $p=(p_B, p_N)$ where $p_N = 0$ and 
$Bp_B = Aq$ with $B$ a $d \times d$ invertible submatrix of $A$. This shows $|\supp(p)| \leq d$.
\end{proof}

The paper is organized as follows. In Section \ref{sec:linear}, we focus on the maximum divergence to linear models. Theorem \ref{thm:linear-vertex} proves that the maximum divergence to a linear model will always be achieved at a vertex of the logarithmic Voronoi polytope at a vertex of the model itself. In Section \ref{sec:crit-pts}, we focus on identifying the critical points of information divergence to toric models. Theorem \ref{thm:critical-projection} gives a necessary and sufficient condition for a vertex of a logarithmic Voronoi polytope to be a critical point. In Section \ref{sec:chamber-complex}, we define the chamber complex of a toric model and describe how it determines the combinatorial type of logarithmic Voronoi polytopes in Theorem \ref{thm:vertices}. We then present an algorithm for maximizing information divergence from a toric model, which utilizes the combinatorics of the chamber complex and numerical algebraic geometry. We make our code for several parts of the algorithm available on Github. \footnote{\url{https://github.com/yuliaalexandr/maximizing-divergence}} Section \ref{sec:decomposable} is devoted to reducible toric models and a decomposition theory of their logarithmic Voronoi polytopes. Theorem \ref{thm:decomposition-theorem} provides a way to reconstruct logarithmic Voronoi polytopes of a reducible model $\M$ from the logarithmic Voronoi polytopes of the models induced by the the reduction of $\M$. Section \ref{sec:comparing-divergences} then explains how to use this decomposition to obtain and bound information divergence to reducible models. Finally, Section \ref{sec:ml-deg-one} studies divergence from toric models of ML degree one. After revisiting the multinomial model in Theorem \ref{thm:twisted-veronese-max-divergence}, we generalize the results to the box model by establishing the maximum divergence and characterizing the set of maximizers in Theorem \ref{thm:box-max-divergence}. Theorem \ref{thm:trapezoid-upper-bound} establishes an upper bound for divergence to the trapezoid model.

\section{Maximum divergence from linear models}\label{sec:linear}
Let $\M$ be a $d$-dimensional linear model in $\Delta_{n-1}$ given by an $n\times d$ matrix $B$ with rows $b_1,\ldots,b_n$ which sum to the zero vector and a vector $c\in\Delta_{n-1}$. That is, $\M$ is the image of the linear map
$$f: \Theta\to \Delta_{n-1}: (x_1,\ldots, x_d)\mapsto (c_1-\langle b_1, x\rangle,\ldots, c_n-\langle b_n, x\rangle).$$
We wish to find $D(\M)$ and the points $ p \in \Delta_{n-1}$ at which the information divergence
$D_\M(p)$ from the linear model is maximized.  By Proposition \ref{prop:vertex-does-it}, 
$$ D(\M) = \max_{q \in \M} \, \max_{p \in Q_q} D(p \parallel q). $$ 
 Hence, the maximum is achieved  at some of the vertices of the logarithmic Voronoi cell $Q_q$ at $q$. The vertices of $Q_q$ at $q=f(x)$ are given by the co-circuits of $B$ and can be expressed as functions in $q$ (or the parameters $x$) \cite[Proposition 2]{Alexandr}. Here, by a co-circuit of $B$ we mean a nonzero $z \in \RR^n$ of minimal support so that $z^TB = 0$. Each co-circuit $z$ of $B$ such that $\langle z,q\rangle=\sum_{i=1}^n z_iq_i=1$ defines a vertex
$V_z(q)=(z_1q_1,\ldots,z_nq_n)$ of $Q_q$. Note that the choice of the co-circuit representative does not depend on the point $q$, i.e. we may always choose the representative $z$ such that $\langle z,q \rangle =1$ for all $q\in \M$ simultaneously. Indeed, let $y$ be some co-circuit of $B$. We wish to find $k\in\RR$ such that $z=ky$ has the property $\langle z, q\rangle=1$ for all $q\in \M$. Since $q=c-Bx$ for some $x\in\Theta$, we have that $$1=\langle z, q\rangle= k\langle y, c-Bx\rangle =k\langle y, c\rangle.$$
 Hence, $z=ky$ where $k=1/\langle y,c\rangle$ is the desired co-circuit representative. For every such co-circuit we wish to maximize the information divergence over all $q\in \M$. We then compare the maximum divergences over all such co-circuits to find the global maximum.
\begin{lemma}\label{lem:linear-divergence}
Let $\M$ be a linear model defined by the matrix $B$ and the vector $c$. 
     For a fixed co-circuit $z$ of $B$, the information divergence $D(V_z(q) \parallel  q)$ is linear in $q \in \M$.
\end{lemma}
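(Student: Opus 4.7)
The plan is to carry out a direct substitution into the definition of Kullback-Leibler divergence and observe that all the nonlinear dependence on $q$ cancels, leaving a linear expression.

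First I would unpack the vertex formula $V_z(q)=(z_1q_1,\ldots,z_nq_n)$ and substitute into $D(\,\cdot\,\|\,q)$:
\[
D(V_z(q)\parallel q)=\sum_{i=1}^n z_iq_i\log\!\left(\frac{z_iq_i}{q_i}\right)=\sum_{i=1}^n z_iq_i\log(z_i),
\]
where the $q_i$ in the numerator and denominator of the logarithm simply cancel whenever $q_i>0$. The coefficients $z_i\log(z_i)$ depend only on the fixed co-circuit $z$ and not on $q$, so the right-hand side is a linear function of $q\in\mathbb{R}^n$, hence linear on the affine subspace containing $\M$.

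Next I would justify that this manipulation is legitimate, which amounts to bookkeeping for zero coordinates. Since $V_z(q)$ is a vertex of $Q_q\subseteq\Delta_{n-1}$, all of its entries $z_iq_i$ are nonnegative, so for indices $i$ with $q_i>0$ we must have $z_i\geq 0$, making $\log(z_i)$ well defined wherever it appears nontrivially. Indices with $z_i=0$ contribute $0\log 0=0$ and drop out by the standing convention, and indices with $q_i=0$ contribute $z_iq_i\log(z_i)=0$ for the same reason; both extensions are consistent with the linear formula $\sum_{i:z_i>0}(z_i\log z_i)\,q_i$, which is automatically continuous on the closed model.

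There is no real obstacle here beyond the convention check for boundary points of $\M$; the key combinatorial input, namely the explicit description of the vertex $V_z(q)$ in terms of a co-circuit, has already been recorded just above the lemma statement, and it is precisely this description that makes the logarithm collapse to a constant. The conclusion then follows immediately.
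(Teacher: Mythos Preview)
Your proof is correct and follows exactly the same approach as the paper: a direct substitution of $V_z(q)=(z_1q_1,\ldots,z_nq_n)$ into the divergence formula, after which the $q_i$ in the logarithm cancel to leave the linear expression $\sum_i (z_i\log z_i)\,q_i$. Your added justification for the zero-coordinate conventions is more careful than the paper's one-line computation, but the argument is otherwise identical.
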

\begin{proof}
$$D(V_z(q) \parallel q)=\sum_{i = 1}^n (z_iq_i)\log (z_iq_i/q_i)=\sum_{i=1}^n (z_i\log(z_i))q_i.$$
\end{proof}
Hence, for each co-circuit $z$, we are maximizing a linear function over the polytope $\M$. 
We summarize this in the following result. 
\begin{theorem} \label{thm:linear-vertex}
    The maximum divergence of a linear model $\M$ is always achieved at a vertex of the logarithmic Voronoi polytope $Q_q$ where $q$ itself is a vertex of $\M$.    
\end{theorem}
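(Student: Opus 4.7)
The plan is to combine Proposition \ref{prop:vertex-does-it}, the co-circuit parametrization of the vertices of $Q_q$, and Lemma \ref{lem:linear-divergence} with one elementary polyhedral observation. First I would rewrite
\[
D(\M) \;=\; \max_{q \in \M}\,\max_{v \in \mathrm{vert}(Q_q)} D(v \parallel q) \;=\; \max_{z}\,\max_{q \in \M_z} D(V_z(q) \parallel q),
\]
where $z$ runs over the finite set of canonically normalized co-circuits of $B$, and $\M_z := \{q \in \M : z_i q_i \geq 0 \text{ for all } i\}$ is the sub-polytope of $\M$ on which $V_z(q)$ actually lies in $\Delta_{n-1}$ and is thus a vertex of $Q_q$.

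The key polyhedral observation is that each $\M_z$ is a face of $\M$. Indeed, since $\M \subseteq \Delta_{n-1}$ already enforces $q_i \geq 0$, the constraint $z_i q_i \geq 0$ is automatic on $\M$ when $z_i \geq 0$, while when $z_i < 0$ it combines with $q_i \geq 0$ to force $q_i = 0$. Hence $\M_z = \M \cap \bigcap_{i \,:\, z_i < 0} \{q_i = 0\}$ is the intersection of $\M$ with certain coordinate hyperplanes of the simplex, i.e., a face of $\M$.

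By Lemma \ref{lem:linear-divergence}, for each fixed $z$ the inner objective $D(V_z(q) \parallel q) = \sum_i z_i \log(z_i)\, q_i$ is linear in $q$, so maximizing it over the face $\M_z$ is a linear program whose optimum is attained at a vertex of $\M_z$. Every vertex of a face of $\M$ is itself a vertex of $\M$, so this maximum is realized at a vertex of $\M$. Taking the maximum over the finitely many co-circuits then yields the theorem. I do not anticipate a substantial obstacle here; the whole argument reduces to the observation that for each co-circuit the validity region is a face of $\M$, which is precisely what makes the linear-model situation reduce cleanly to a linear program over $\M$.
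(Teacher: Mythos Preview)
Your argument is correct and follows the same core idea as the paper: use Lemma~\ref{lem:linear-divergence} to reduce, for each fixed co-circuit $z$, to maximizing a linear function over a polytope. The paper's proof is extremely brief --- it simply says ``for each co-circuit $z$, we are maximizing a linear function over the polytope $\M$'' and then states the theorem.

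Your version adds one refinement the paper leaves implicit: you restrict, for each $z$, to the validity region $\M_z = \{q \in \M : z_i q_i \geq 0 \text{ for all } i\}$ and observe that $\M_z = \M \cap \bigcap_{i: z_i < 0}\{q_i = 0\}$ is a face of $\M$. The paper effectively maximizes over all of $\M$, relying on the fact (cited from \cite{Alexandr}) that the co-circuits parametrizing vertices of $Q_q$ are constant on the relative interior of $\M$; for those normalized co-circuits one has $\M_z = \M$. Your face argument covers the boundary behaviour as well without appealing to that combinatorial constancy, so it is a slightly more self-contained justification of the same step.
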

\begin{remark}
A particular kind of discrete exponential family that is also a linear model is a partition model. The information divergence from partition models have been studied in \cite{MA03}. A result similar to Theorem \ref{thm:linear-vertex} is Proposition 2 in this reference.
\end{remark}
Theorem \ref{thm:linear-vertex} can be used to obtain compact formulas for maximum divergence for special families of linear models, such as the one below.
\begin{cor}
Let $\M$ be a one-dimensional linear model in $\Delta_{3}$ given by $B=[-a,-b,b,a]^T$, $a,b>0$ and $c=(\frac{1}{4},\frac{1}{4},\frac{1}{4},\frac{1}{4})$. Then $D(\M)=\log\left(\frac{4\max\{a,b\}}{a+b}\right)$, maximized at two vertices of~$\Delta_{3}$.
\end{cor}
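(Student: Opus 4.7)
The plan is to apply Theorem~\ref{thm:linear-vertex} directly by explicitly parametrizing the one-dimensional model, enumerating its co-circuits, and then computing divergences via Lemma~\ref{lem:linear-divergence}.

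First, I would parametrize $\M$ as $q(x) = (\tfrac14+ax,\, \tfrac14+bx,\, \tfrac14-bx,\, \tfrac14-ax)$. The simplex constraints force $x \in [-1/(4m),\, 1/(4m)]$ for $m = \max\{a,b\}$, so $\M$ is a segment with exactly two vertices. Because the problem is invariant under relabeling via $(1\,2)(3\,4)$ combined with swapping $a\leftrightarrow b$, I may assume $a \geq b$ without loss of generality, in which case the two vertices of $\M$ are $q^+ = (\tfrac12,\, \tfrac{a+b}{4a},\, \tfrac{a-b}{4a},\, 0)$ and $q^- = (0,\, \tfrac{a-b}{4a},\, \tfrac{a+b}{4a},\, \tfrac12)$.

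Next, I would enumerate the co-circuits of $B = (-a,-b,b,a)^T$. Since $B^T$ imposes a single linear relation, minimal-support solutions have support of size two, yielding six co-circuits up to scaling. After normalizing so that $\langle z,c\rangle=1$, the four with non-negative entries are $z_1 = \tfrac{4}{a+b}(b,0,a,0)$, $z_2 = (2,0,0,2)$, $z_3 = (0,2,2,0)$, and $z_4 = \tfrac{4}{a+b}(0,a,0,b)$; the two sign-mixed co-circuits $(b,-a,0,0)$ and $(0,0,a,-b)$ only produce a valid vertex $V_z(q) \in \Delta_3$ at a boundary point of $\M$, and a direct check shows they recover the same two vertices identified below. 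By Theorem~\ref{thm:linear-vertex} combined with Lemma~\ref{lem:linear-divergence}, $D(\M)$ is the maximum over $z \in \{z_1,z_2,z_3,z_4\}$ and $q \in \{q^+,q^-\}$ of the linear functional $\sum_i (z_i \log z_i)\, q_i$.

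A direct evaluation then gives $\log 2$ for $z_2,z_3$ at either vertex of $\M$, while for $z_1,z_4$ the ``matched'' pairings $V_{z_1}(q^-)$ and $V_{z_4}(q^+)$ both return $\log(\tfrac{4a}{a+b})$; crucially, these vertices equal $e_3=(0,0,1,0)$ and $e_2=(0,1,0,0)$, two vertices of $\Delta_3$. Since $a \geq b$ implies $\tfrac{4a}{a+b} \geq 2$, I conclude $D(\M) = \log\bigl(\tfrac{4\max\{a,b\}}{a+b}\bigr)$, attained at $e_2$ and $e_3$.

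The main (minor) obstacle is bookkeeping: one must verify that the sign-mixed co-circuits do not yield new maximizers and that the ``mixed'' evaluations $V_{z_1}(q^+)$ and $V_{z_4}(q^-)$ are dominated by $\log(\tfrac{4a}{a+b})$. Writing $A = \tfrac{4a}{a+b}$ and $B = \tfrac{4b}{a+b}$ with $A+B = 4$, this last step reduces to the one-line inequality $\tfrac{B}{2}(\log A - \log B) \geq 0$, which is immediate from $a \geq b$.
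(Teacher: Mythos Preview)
Your proof is correct and follows essentially the same route as the paper's: parametrize the segment, list the four nonnegative co-circuits, evaluate the linear functional $\sum_i (z_i\log z_i)\,q_i$ from Lemma~\ref{lem:linear-divergence} at the two endpoints of $\M$, and compare. The paper carries out exactly these eight evaluations (its $V_1,\dots,V_4$ at $v_1,v_2$ are your $z_3,z_4,z_2,z_1$ at $q^-,q^+$) and reaches the same maximizers $e_2,e_3$.

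One small remark on your handling of the sign-mixed co-circuits: when you say they ``recover the same two vertices identified below,'' note that the point $e_3$ produced by $(0,0,a,-b)$ appears at $q^+$, whereas the genuine maximizer $e_3$ lies in $Q_{q^-}$ (since the MLE of $e_3$ is $q^-$). So the sign-mixed co-circuits do not actually contribute vertices of the relevant $Q_q$ at all; they should simply be discarded rather than ``recovered.'' The paper sidesteps this by observing up front that each $Q_q$ is a quadrangle, so only the four nonnegative co-circuits parametrize vertices. This does not affect your conclusion.
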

\begin{proof}
    Without loss of generality, assume that $a>b$. Then the model is parametrized as $f: x\mapsto (ax+1/4, bx+1/4, -bx+1/4, -ax+1/4)$. The two vertices of the model are $v_1=f(-\frac{1}{4a})$ and $v_2=f(\frac{1}{4a})$. Each logarithmic Voronoi polytope is a quadrangle, so the matrix $B$ has four co-circuits which parameterize the four vertices of this polytope at a general point $q=f(x):$
    \begin{align*}
&V_1(x)=(0, 2bx + 1/2, -2bx + 1/2, 0)\\
&V_2(x)=(0, (4abx + a)/(a + b), 0, (b- 4abx)/(a + b))\\
&V_3(x)=(2ax + 1/2, 0, 0, -2ax + 1/2)\\
&V_4(x)=((4abx + b)/(a + b), 0, (a-4abx)/(a + b), 0).
    \end{align*}
Note that $D(V_1(x)||f(x))=D(V_3(x)||f(x))=\log(2)$ for all $x\in\left[-\frac{1}{4a},\frac{1}{4a}\right]$. On the other hand, \begin{align*}
    &D\left(V_2\left(-\frac{1}{4a}\right) \parallel v_1 \right)=\frac{{\left(a - b\right)} \log\left(\frac{4 \, a}{a + b}\right) + 2 \, b \log\left(\frac{4 \, b}{a + b}\right)}{a + b}<\log\left(\frac{4 \, a}{a + b}\right)= D\left(V_2\left(\frac{1}{4a}\right) \parallel v_2 \right)\\
    &D\left(V_4\left(-\frac{1}{4a}\right) \parallel v_1 \right) =\log\left(\frac{4 \, a}{a + b}\right)>\frac{{\left(a - b\right)} \log\left(\frac{4 \, a}{a + b}\right) + 2 \, b \log\left(\frac{4 \, b}{a + b}\right)}{a + b}=D\left( V_4\left(\frac{1}{4a}\right) \parallel v_2 \right).
\end{align*}
Hence, the maximum divergence $\log\left(\frac{4 \, a}{a + b}\right)$ is achieved at the two vertices $V_2\left(\frac{1}{4a}\right)=(0,1,0,0)$ and $V_4\left(-\frac{1}{4a}\right)=(0,0,1,0)$. The proof for $b>a$ is identical.
\end{proof}

\begin{example}
    Consider the $1$-dimensional linear model $\M$ inside $\Delta_3$ given by 
    $B=[-2,-1,1,2]^T$ and $c =(1/4,1/4,1/4,1/4).$ It is a line segment in $\Delta_3$ with the vertices $v_1=f(-1/8)=(0, 1/8, 3/8, 1/2)$ and $v_2=f(1/8)=(1/2, 3/8, 1/8, 0)$. The global maximum divergence $\log(8/3)$ is achieved at $V_4(-1/8)=(0, 0, 1, 0)$ and $V_2(1/8)=(0, 1, 0, 0)$. This is illustrated in Figure \ref{fig:boundary-linear}.
\begin{figure}[H]
    \centering
    \includegraphics[width=0.4\textwidth]{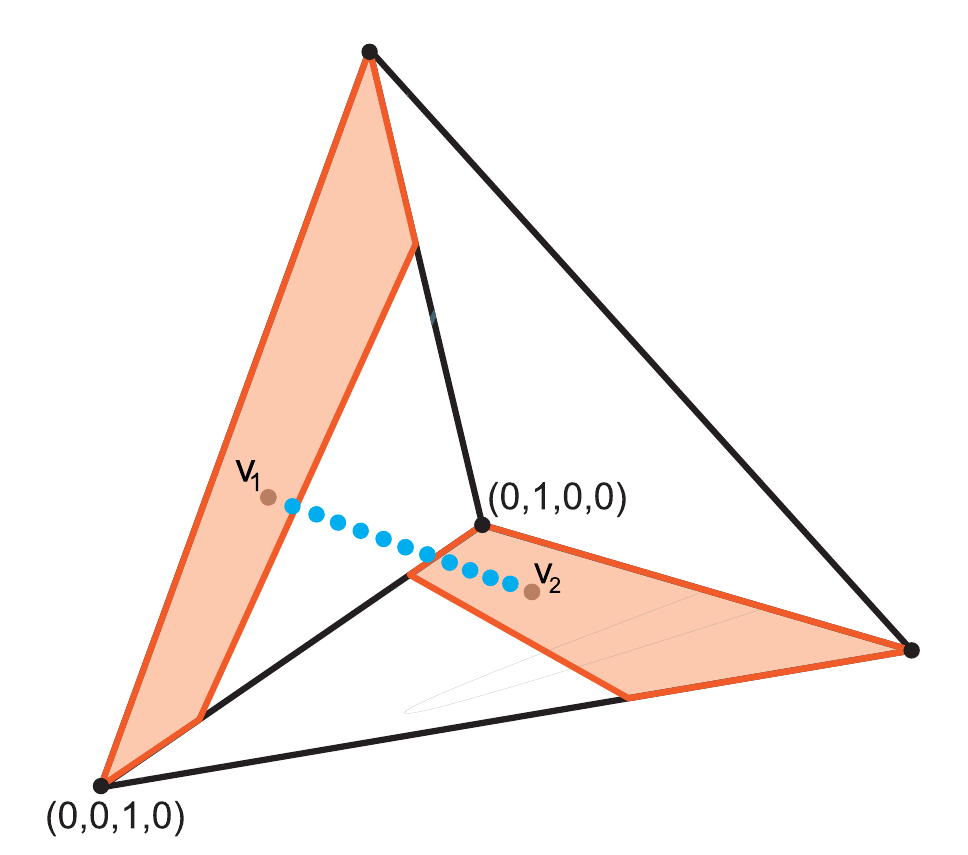}
    \caption{Linear model given by $B=[-2,-1,1,2]^T$.}
    \label{fig:boundary-linear}
\end{figure}
\end{example}

As we close this section we wish to emphasize two relevant facts about logarithmic Voronoi polytopes of 
linear models. First, for all points $q$ that are in the relative interior of $\M$ the combinatorial type 
of $Q_q$ is the same \cite[Corollary 4]{Alexandr}. Moreover, if $\M$ is the transversal intersection
of an affine subspace with $\Delta_{n-1}$, {\it all} $Q_q$, including the ones at boundary points of $\M$, have the same combinatorial type \cite[Theorem 9]{Alexandr}. The results in this section help us identify those logarithmic Voronoi polytopes of just 
one combinatorial type (at least for generic $\M$) potentially containing a vertex attaining $D(\M)$. 
This phenomenon carries over to the toric case where we need to account for the fact that the logarithmic Voronoi polytopes have more than one (but finitely many) combinatorial types. In the next section, we will review results that will be useful in locating vertices 
of logarithmic Voronoi polytopes of the same combinatorial type that potentially maximize the information divergence. Then in Section \ref{sec:chamber-complex} we will see how to parameterize the different combinatorial types and how this helps develop an algorithm to compute $D(\M)$.

\section{Critical points of information divergence to toric models}\label{sec:crit-pts}
 
In the rest of the paper, we will work only with toric models $\M_A$ introduced in Section \ref{introduction}. For a face $F$ of a given polytope $Q$, we define the support of $F$ as the union of the supports of the vertices on $F$ and denote it by $\supp(F)$.  We start with a definition that will pave the path for characterizing the critical points of the function
$D_\M(\cdot)$. 
\begin{definition} \label{def:complementary}
Let $Q_q$ be a logarithmic Voronoi polytope at a point on a toric model $\M_A \subset \Delta_{n-1}$. A vertex $v$ of $Q_q$ is \textit{complementary} if there exists a face $F$ of $Q_q$ such that $\supp(F) = [n] \setminus \supp(v)$. We call $F$ the complementary face of $v$.
\end{definition} 

\begin{definition} \label{def:projection-point}
 Let $\M_A \subset \Delta_{n-1}$ be a toric model and 
 let $p$ be a point in $\Delta_{n-1}$ whose MLE is
 $q$ with $\supp(q)=[n]$. We say that $p$ is a projection point if 
$$p_i=\begin{cases}\frac{q_i}{\sum_{j\in\supp(p)}q_j}&\text{ if }i\in\supp(p)\\ \;\;\;\;\;\;\;\,0&\text{ otherwise}.\end{cases}$$
\end{definition}
\begin{remark}
We can relax the condition for the full support of 
the MLE in the above definition. In this case,
we need to consider MLEs that are in the extended
exponential family, namely, those that are on $\M_A$
and on a proper face $\Gamma$ of $\Delta_{n-1}$. However, these can be separately treated by focusing 
on the toric model $\M_{A_\Gamma} \subset \Gamma$ where $A_\Gamma$ consists of the columns $a_i$ of $A$ with $i \in \supp(\Gamma)$. 
\end{remark}
\begin{theorem}  \label{thm:critical-projection}
If $p$ is a local maximizer of $D_{\M_A}$ then $p$ is a projection point.  Moreover, every such projection point is a complementary vertex of $Q_q$ where $q$ is the MLE of $p$. A complementary vertex $v$ of $Q_q$
with the complementary face $F$
is a projection point if and only if the line passing
through $v$ and $q$ intersects the relative interior
of $F$.
\end{theorem}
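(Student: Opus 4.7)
The plan is to split the theorem into its three assertions and handle them in turn: the first via Matúš's first-order optimality conditions, and the remaining two via a single explicit construction in $Q_q$. Throughout, for a candidate point $p$ I set $S := \supp(p)$, write $q$ for its MLE (assumed to have full support, as in Definition~\ref{def:projection-point}), and put $Z := \sum_{i \in S} q_i \in (0,1)$.

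For the first assertion I would invoke the optimality conditions of Matúš \cite{Mat07} (also developed in \cite{RauhThesis}), which state that at a local maximum $p$ of $D_{\M_A}$ the ratio $p_i/q_i$ is constant over $i \in S$. Combined with $\sum_{i \in S} p_i = 1$, this forces $p_i = q_i/Z$ on $S$, which is exactly Definition~\ref{def:projection-point}. Note that Proposition~\ref{prop:vertex-does-it} already guarantees such $p$ is a vertex of $Q_q$, and Corollary~\ref{cor:support-of-maximizer} gives $|S| \leq d$.

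The key construction for the other two assertions is the point $p^* := (q - Zp)/(1 - Z)$. A routine calculation shows $p^* \in \Delta_{n-1}$, $Ap^* = Aq$ (so $p^* \in Q_q$), and $\supp(p^*) = [n] \setminus S$. Hence $p^*$ lies in the relative interior of the face $F := Q_q \cap \{u_i = 0 : i \in S\}$, and by the definition of the support of a face, $\supp(F) = [n] \setminus S$. Thus $p$ is a complementary vertex of $Q_q$ with complementary face $F$, establishing the second assertion. For the third assertion, the identity $q = Zp + (1-Z)p^*$ places $q$ on the segment from $p$ to $p^*$, so the line through $p$ and $q$ meets $\mathrm{relint}(F)$ at $p^*$, giving one direction. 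Conversely, if $v$ is a complementary vertex whose line through $q$ meets $\mathrm{relint}(F)$ at some $w$, write $q = \alpha v + (1-\alpha) w$ with $\alpha \in (0,1)$; since $w_i = 0$ for $i \in \supp(v)$, equating $i$-th coordinates there yields $v_i = q_i/\alpha$, and summing over $\supp(v)$ identifies $\alpha = \sum_{i \in \supp(v)} q_i$, so $v$ is a projection point.

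The main obstacle is really only the first assertion, where one must carefully invoke Matúš's characterization of local maxima of $D_\E$ in the present notation; once the projection-point equation $p_i = q_i/Z$ is in hand, everything reduces to manipulating the single affine relation $q = Zp + (1-Z)p^*$, which simultaneously encodes the complementarity of $F$ and the collinearity statement.
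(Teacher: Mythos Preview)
Your proposal is correct and follows essentially the same route as the paper. The paper likewise cites \cite[Theorem 5.1]{Mat07} for the first assertion and, for the rest, constructs the auxiliary point $\tilde p = p + \frac{1}{1-Z}(q-p)$, which is algebraically identical to your $p^* = (q - Zp)/(1-Z)$; your treatment of the converse in the third assertion is in fact more explicit than the paper's, which simply asserts that ``the last statement follows.''
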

\begin{proof}
The first statement is proved in   \cite[Theorem 5.1]{Mat07}.
Since $p$ is a local maximizer it needs to be a vertex of $Q_q$. The point $\tilde{p}$ defined by
$$\tilde{p}_i=\begin{cases}\frac{q_i}{\sum_{j \not \in\supp(p)}q_j}&\text{ if }i \not \in\supp(p)\\ \;\;\;\;\;\;\;\,0&\text{ otherwise}\end{cases}$$
is obtained by $\tilde{p} = p + \frac{1}{\sum_{j \not \in \supp(p)} q_j}[q - p]$ where $[q-p]$ is a vector parallel
to the line through $p$ and~$q$. The support of $\tilde{p}$
is precisely $[n] \setminus \supp(p)$ and therefore it
is contained in the interior of a face $F$ with 
identical support. Hence $p$ is a complementary vertex and the last statement follows.
\end{proof}
\begin{example}
The binomial model of size $3$ is the set of probability 
distributions on $\mathfrak{X} = \{0,1,2,3\}$ parametrized as
$$q_j =  \binom{3}{j} \theta^j(1-\theta)^j, \,\, j=0,1,2,3.$$
This is a one-dimensional toric model that describes the experiment of flipping a coin with the bias $\theta$ three times. The matrix $A$ can be taken to be
$$ A \, = \, \left(\begin{array}{cccc} 1 & 1 &  1 & 1 \\
                                       0 & 1 &  2 & 3 \end{array} \right).$$
For $p = (p_0, p_1, p_2, p_3) \in \Delta_3$ the MLE is given
by
\begin{align*}
  q_0 &= \frac{1}{27}(3p_0+2p_1+p_2)^3, \\
  q_1 &= \frac{1}{9}(p_1+2p_2+3p_3)(3p_0+2p_1+p_2)^2, \\
  q_2 &= \frac{1}{9}(p_1+2p_2+3p_3)^2(3p_0+2p_1+p_2), \\
  q_3 &= \frac{1}{27} (p_1+2p_2+3p_3)^3.
\end{align*}
The logarithmic Voronoi polytopes are of the form
$Q_b = \{u \in \Delta_3 \, : \, u_1 + 2u_2 + 3u_3 = b\}$
where $0< b < 3$. For $0<b<1$ and $2<b<3$ these polytopes are  triangles. The first kind has vertices  with supports 
$\{0,1\}$, $\{0,2\}$, and $\{0,3\}$. The vertices of 
the second kind have supports $\{0,3\}$, $\{1,3\}$, and $\{2,3\}$. None of these triangles have a complementary vertex. 
When $b=1$ and $b=2$, $Q_b$ is still a
triangle: the supports of the vertices of $Q_1$ are
$\{1\}$, $\{0,2\}$, and $\{0,3\}$. Those of $Q_2$ are 
$\{0,3\}$, $\{1,3\}$, and $\{2\}$. 
In $Q_1$, the vertex $(0,1,0,0)$ is a projection
point with divergence $\log \frac{9}{4}$. In $Q_2$,
the vertex $(0,0,1,0)$ is a projection point with
the same divergence. 
The logarithmic Voronoi polytopes for $1<b<2$ are quadrangles with vertex supports $\{0,2\}, \{0,3\}, \{1,2\}$, and $\{1,3\}$. Therefore each vertex is a complementary vertex where the corresponding complementary face $F$ is a vertex itself. Among all these, we find projection points only when $b=3/2$. The vertices
of $Q_{\frac{3}{2}}$ are 
$(\frac{1}{4}, 0, \frac{3}{4}, 0)$, $(\frac{1}{2}, 0, 0, \frac{1}{2})$, $(0, \frac{1}{2}, \frac{1}{2}, 0)$, and $(0, \frac{3}{4}, 0, \frac{1}{4})$. All are projection points
with the MLE $q=(\frac{1}{8}, \frac{3}{8}, \frac{3}{8}, \frac{1}{8})$ which is the intersection of the diagonals of the quadrangle. The divergences from each vertex to this  binomial model are $\log(2), 2\log(2), 2\log(2)-\log(3)$, and $\log(2)$, respectively. Therefore $(\frac{1}{2}, 0, 0, \frac{1}{2})$ is the unique global maximizer attaining $D(\M) = 2 \log(2)$.
\end{example}

\begin{cor} \cite[Section VI]{Rauh11} \label{cor:codim-1}
 Let $\M_A$ be a codimension one toric model in
 $\Delta_{n-1}$, i.e., let $\mathrm{rank}(A)= d = n-1$. 
 Then there are exactly two projection points and at most two global maximizers of $D_{\M_A}$. 
\end{cor}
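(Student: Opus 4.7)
The plan is to leverage the fact that in codimension one, every logarithmic Voronoi polytope is a line segment. Since $\operatorname{rank}(A) = n-1$, the kernel of $A$ is spanned by a single vector $z \in \mathbb{R}^n$; because $(1,\ldots,1) \in \operatorname{rowspan}(A)$, we have $\sum_i z_i = 0$, so $z$ has both positive and negative entries. I write $I^+$, $I^-$, and $I^0$ for the indices where $z_i$ is positive, negative, and zero, respectively.

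For any $q$ in the relative interior of $\M_A$, the logarithmic Voronoi polytope is $Q_q = \{q + tz : q + tz \geq 0\}$, a segment whose two vertices $v^\pm$ are obtained by pushing $t$ to its extreme values $t^+ = \min_{i \in I^-}(q_i/|z_i|)$ and $t^- = -\min_{i \in I^+}(q_i/z_i)$. By Theorem \ref{thm:critical-projection}, each projection point is a complementary vertex of some $Q_q$; since a segment has only its two endpoints as proper faces, the complementary face of a vertex $v^+$ can only be the opposite vertex $v^-$. Thus $v^+$ and $v^-$ are projection points exactly when $\supp(v^+)$ and $\supp(v^-)$ partition $[n]$, and the transversality condition in Theorem \ref{thm:critical-projection} is automatic, since the line through $v^+$ and $q$ passes through $v^-$.

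The partition condition forces $I^0 = \emptyset$ and, moreover, requires the extremal values of $t$ to be attained simultaneously by every index in $I^\mp$. Equivalently, $q_i = s|z_i|$ for $i \in I^-$ and $q_i = s' z_i$ for $i \in I^+$, for some constants $s, s' > 0$. Substituting this form into the simplex equation $\sum q_i = 1$ and into the generating binomial $\prod_{I^+}(q_i/\omega_i)^{z_i} = \prod_{I^-}(q_i/\omega_i)^{|z_i|}$ of the toric ideal yields two equations in $(s, s')$: a linear one of the form $(s+s')w = 1$ with $w = \sum_{I^+} z_i$, and a power-law one of the form $(s'/s)^w = C$ for an explicit positive constant $C$ built from the $|z_i|$ and $\omega_i$. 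This system admits a unique positive solution, so the single resulting $q \in \M_A$ produces exactly two projection points $v^+, v^-$. Since every global maximizer is a local maximizer and hence a projection point by Theorem \ref{thm:critical-projection}, at most two global maximizers exist.

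The main obstacle is the degenerate situation $I^0 \neq \emptyset$: the toric binomial is then a cylinder over the $I^0$-axes, and the argument above produces no projection points for interior $q$. I would handle this by restricting to the face $\Gamma \subset \Delta_{n-1}$ on which the $I^0$-coordinates vanish and rerunning the argument on the reduced toric model $\M_{A_\Gamma} \subset \Gamma$, which is again of codimension one and whose kernel generator no longer has zero entries, bringing the situation back into the main case.
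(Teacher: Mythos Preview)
Your argument is correct, but it takes a longer route than the paper's. The paper never solves for $q$ on the toric variety at all: once the supports of the two endpoints are forced to be $I^+$ and $I^-$, the paper simply notes that the difference $p-\tilde p$ of the two endpoints lies in $\ker(A)=\mathbb{R}z$, so $p_i=\lambda z_i$ on $I^+$ and $\tilde p_j=\lambda|z_j|$ on $I^-$ for some $\lambda>0$; the simplex constraint $\sum p_i=1$ then pins down $\lambda$ immediately, giving $p_i=z_i/\sum_{I^+}z_k$ and $\tilde p_j=|z_j|/\sum_{I^-}|z_k|$. Since $q$ is the unique point of $\M_A$ on the segment $[p,\tilde p]$, both are automatically projection points. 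Your approach instead parametrizes the candidate $q$ by $(s,s')$ and solves a linear-plus-power-law system coming from the simplex equation and the defining binomial; this is correct and pins down the same pair, but the detour through the toric equation is unnecessary because $p$ and $\tilde p$ are already determined by pure linear algebra before one ever looks at $\M_A$. On the other hand, you explicitly flag and handle the degenerate case $I^0\neq\emptyset$, which the paper silently assumes away by writing the defining binomial with all $n$ variables present; that is a genuine addition, though your proposed fix via the boundary face should be read in light of the Remark following Definition~\ref{def:projection-point}, since projection points in the paper's sense require $\supp(q)=[n]$.
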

\begin{proof}
The toric variety $X_A$ is defined by a single equation
which we can assume is of the form $x_1^{u_1}x_2^{u_2} \cdots x_r^{u_r} - x_{r+1}^{u_{r+1}} \cdots x_n^{u_n}$
where $\sum_{i=1}^r u_i = \sum_{j=r+1}^n u_j$. 
The one-dimensional $\ker(A)$ is spanned by 
$(u_1, \ldots, u_r, -u_{r+1}, \ldots, -u_n)$, and all logarithmic Voronoi polytopes are one-dimensional whose
affine span is parallel to $\ker(A)$. Since each such polytope has exactly two vertices, the line through these vertices
always intersects $\M_A$. Hence, for these vertices to be projection points, we only need to make sure that they have complementary support. This can 
only happen if the vertices are $p=(p_1, \ldots, p_r, 0, \ldots, 0)$ and $\tilde{p} = (0, \ldots, 0, \tilde{p}_{r+1},\ldots, \tilde{p}_n)$ where
$p_i = \frac{u_i}{\sum_{i=1}^r u_i}$ for $i=1,\ldots, r$
and $\tilde{p}_j = \frac{u_j}{\sum_{j=r+1}^n u_j}$ for $j=r+1, \ldots, n$. Both points are projection points and
either one or both of them are global maximizers of $D_{\M_A}$.
\end{proof}

\begin{example}\label{ex:two-binary-RV}
Let $X$ and $Y$ be two independent binary random variables. 
The set of joint probability distributions $q_{ij} = 
\Prob(X=i, Y=j)$ with $i,j \in \{0,1\}^2$ is parametrized
by $q_{ij} = a_ib_j$. This toric model $\M_A \subset \Delta_3$ has codimension one and can be given by the matrix 
$$ A \, = \, \left( \begin{array}{cccc} 
                   1 & 1 & 1 & 1 \\
                   0 & 0 & 1 & 1 \\
                   0 & 1 & 0 & 1 
                   \end{array} \right).$$
The kernel of $A$ is generated by $(1,-1, -1, 1)$, and 
the only two projection points are $(\frac{1}{2}, 0, 0, \frac{1}{2})$ and 
$(0, \frac{1}{2}, \frac{1}{2}, 0)$ with the MLE $q = (\frac{1}{4}, \frac{1}{4}, \frac{1}{4}, \frac{1}{4})$. 
Since the information divergence from both projection points is $\log(2)$ they are both global maximizers. 
\end{example}

We finish this section with a result that will be useful later. 
\begin{theorem} \cite[Lemma 3.2]{MRA11} \label{thm:block-matrix}
Let $A_1, A_2, \ldots, A_k$ be $d_i \times n_i$ matrices, $i = 1, \ldots, k$
with nonnegative entries and with the corresponding all ones vector as their first row. Let 
 $$A = \left( \begin{array}{cccc} A_1 & 0 & \cdots & 0\\ 0 & A_2 & \cdots & 0 \\
 \vdots & \vdots & \ddots & \vdots \\
 0 & 0 & \cdots & A_k \end{array} \right).$$ 
Then $D(\M_A) = \max \{D(\M_{A_1}), \ldots, D(\M_{A_k}) \}$. 
\end{theorem}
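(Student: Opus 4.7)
The plan is to exploit the product-like structure that block-diagonality of $A$ imposes on the toric model $\M_A$ and to reduce the divergence computation to a per-block computation plus a mixture-weight term.

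First, I would characterize $\M_A$ explicitly. Writing coordinates of $\Delta_{n-1}$ as $p = (p^{(1)}, \ldots, p^{(k)})$ with $p^{(i)} \in \RR_{\geq 0}^{n_i}$, I would show that $q \in \M_A$ if and only if $q^{(i)} = \lambda_i \, r^{(i)}$ for some probability vector $(\lambda_1, \ldots, \lambda_k)$ and some $r^{(i)} \in \M_{A_i}$. This uses the block-diagonal form: because the parameters of the $i$th block affect only the coordinates in block $i$, the normalized restriction of $q$ to each block lies in $\M_{A_i}$, and the remaining freedom is exactly the choice of the block-masses $\lambda_i$, controlled by the first (all-ones) row of each $A_i$.

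Next, for an arbitrary $p \in \Delta_{n-1}$, set $\mu_i = \sum_x p^{(i)}_x$ and $\tilde{p}^{(i)} = p^{(i)}/\mu_i$ on the support. A direct computation of the KL divergence yields
$$D(p \parallel q) \, = \, \sum_{i=1}^k \mu_i \, D\!\bigl(\tilde{p}^{(i)} \parallel r^{(i)}\bigr) \, + \, D(\mu \parallel \lambda),$$
which cleanly separates the contribution of the in-block distributions from the contribution of the block-weight distribution. Minimizing the right-hand side over $q \in \M_A$ amounts to minimizing independently: $\lambda = \mu$ kills the second term, and $r^{(i)}$ is chosen to be the MLE of $\tilde{p}^{(i)}$ inside $\M_{A_i}$. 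This gives
$$D_{\M_A}(p) \, = \, \sum_{i=1}^k \mu_i \, D_{\M_{A_i}}\!\bigl(\tilde{p}^{(i)}\bigr).$$

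Finally, I would maximize this quantity over $p \in \Delta_{n-1}$. Since the $\tilde{p}^{(i)}$ and the weight vector $\mu$ can be chosen independently, the inner maximum over each $\tilde{p}^{(i)} \in \Delta_{n_i-1}$ is $D(\M_{A_i})$, leaving $\sum_i \mu_i D(\M_{A_i})$, a linear function on the simplex $\Delta_{k-1}$. This is maximized at a vertex, giving $D(\M_A) = \max_i D(\M_{A_i})$. Boundary/support issues (where some $\mu_i = 0$ or some MLE lies on $\overline{\M_{A_i}}$) can be handled by the remark following Definition \ref{def:projection-point}, which reduces to smaller toric submodels and does not affect the supremum. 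The main technical obstacle is the initial identification of $\M_A$ as the block-mixture above; once that is in place, the rest is the standard KL chain-rule identity and a linear optimization.
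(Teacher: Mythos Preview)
Your proposal is correct and follows essentially the same route as the paper: both arguments normalize each block to obtain $\tilde p^{(i)}\in\Delta_{n_i-1}$, identify the corresponding block MLE in $\M_{A_i}$, and reduce $D_{\M_A}(p)$ to the convex combination $\sum_i \mu_i\, D_{\M_{A_i}}(\tilde p^{(i)})$, which is then bounded by (and attains) $\max_i D(\M_{A_i})$. The only cosmetic difference is that you first derive the general chain-rule identity $D(p\parallel q)=\sum_i \mu_i D(\tilde p^{(i)}\parallel r^{(i)})+D(\mu\parallel\lambda)$ and then minimize over $q$, whereas the paper works directly with the MLE $q$ of a maximizer $p$ and uses $A_ip^{(i)}=A_iq^{(i)}$ to force $\mu=\lambda$ without isolating the $D(\mu\parallel\lambda)$ term.
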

\begin{proof}
Let $n = \sum_{i=1}^k n_i$ and $d = \sum_{i=1}^k d_i$.
The toric variety $X_A$ as an affine variety
is $X_{A_1} \times \cdots \times X_{A_k}$ and the defining
toric ideal is $I_A = I_{A_1} + \cdots + I_{A_k}$.
Without loss of generality we can assume that $D(\M_{A_1})$ attains the maximum among the maximum information divergences 
for $\M_{A_1}, \ldots, \M_{A_k}$. Let $p^{(1)} \in \Delta_{n_1-1}$ be a global maximizer with the associated
MLE $q^{(1)}$. Setting $p = (p^{(1)}, 0, \ldots, 0)$
and $q = (q^{(1)}, 0, \ldots, 0)$, we get $p \in \Delta_{n-1}$ and $q \in \M_A = \Delta_{n-1} \cap X_A$. Since $D_{\M_A}(p \parallel q) = D_{\M_{A_1}}(p^{(1)} \parallel q^{(1)})$ we conclude that $D_{\M_A} \geq D_{\M_{A_1}}$. Conversely, 
let $p=(p^{(1)}, \ldots, p^{(k)})$ be a global maximizer
of $D_{\M_A}$ with the MLE $q = (q^{(1)}, \ldots, q^{(k)})$.
Set $p^{(i)}_+ = \sum_{j=1}^{n_i} p^{(i)}_j$ and
$q^{(i)}_+ = \sum_{j=1}^{n_i} q^{(i)}_j$. Note that 
$A_i p^{(i)} = A_i q^{(i)}$, so $p^{(i)}_+ = q^{(i)}_+$,
and $q^{(i)} \in X_{A_i}$. Moreover $\sum_{i=1}^k p^{(i)}_+ 
 =1$. Now let ${\tilde p}^{(i)} = 
\frac{1}{p^{(i)}_+}p^{(i)}$ and ${\tilde q}^{(i)} = 
\frac{1}{p^{(i)}_+}q^{(i)}$. We see that ${\tilde p}^{(i)} \in \Delta_{n_i-1}$, and ${\tilde q}^{(i)} \in \M_{A_i}$
is the MLE of ${\tilde p}^{(i)}$. Since 
$D({\tilde p}^{(i)} \parallel {\tilde q}^{(i)}) = 
\frac{1}{p^{(i)}_+} D( p^{(i)} \parallel q^{(i)})$ 
we conclude that $$ D(\M_A) = D(p \parallel q) = \sum_{i=1}^k D(p^{(i)} \parallel q^{(i)}) = \sum_{i=1}^k p^{(i)}_+ D({\tilde p}^{(i)} \parallel {\tilde q}^{(i)}) \leq \max\{D(\M_{A_1}), \ldots, D(\M_{A_k})\},$$
as desired.

\end{proof}

\section{The chamber complex and the algorithm} \label{sec:chamber-complex}
We devote this section to describing an algorithm to compute $D(\M_A)$ and the corresponding global maximizers for a toric model $\M_A$. We first introduce the chamber complex of $A$:
a polytopal complex $\C_A$ that is supported on the convex hull of (the columns of) $A$. This combinatorial object parametrizes all logarithmic Voronoi polytopes for the model $\M_A$. In particular, the finitely many faces (chambers) of $\C_A$
correspond to all possible combinatorial types of
these logarithmic Voronoi polytopes. It appears that, in order to locate all global maximizers of $D_{\M_A}$, one 
needs to examine the vertices of all logarithmic Voronoi polytopes. With the help of $\C_A$ we will reduce this task to examining vertices of each combinatorial type  where we essentially do an algebraic computation for each chamber in $\C_A$. For any omitted details in the definition and computation of $\C_A$ as well as its properties we refer to \cite[Chapter 5]{DRS10}.

Recall that $A$ is a $d \times n$ matrix with nonnegative integer entries and $\rank(A) = d$. We also assume that the first row of $A$ is the vector of all ones. This means that the convex hull of the columns of $A$, $\conv(A)$, is a polytope of dimension $d-1$ whose set of vertices is a subset of the columns of $A$. For a nonempty  $\sigma \subset [n]$ we let $A_\sigma = \{a_i \, : \, i \in \sigma\}$. When $|\sigma| = d$ and $A_\sigma$ is invertible, $\conv(A_\sigma)$ is a $(d-1)$-dimensional simplex. We will also use $\sigma$ to denote $\conv(A_\sigma)$. By Carath\'{e}odory's theorem \cite[Proposition 1.15]{Z95}, $\conv(A)$ is the union of all such simplices. 

\begin{definition} \label{def:chamber-complex}
For $b \in \conv(A)$ let $C_b \, := \, \underset{\sigma \ni b}{\bigcap} \sigma$. The chamber complex of $A$ is
$$ \C_A \, := \, \{C_b \, :\, b \in \conv(A)\}.$$
\end{definition}
We note that $\C_A$ is a polytopal complex supported 
on $\conv(A)$, and each $C_b$ is a face of $\C_A$. Each such face of $\C_A$ is called a chamber. For every $b \in \conv(A)$ the set $Q_b = \{p \in \Delta_{n-1} \, : \, Ap = b\}$ is a logarithmic Voronoi polytope. 
The polytope $Q_b$ has the maximum dimension $n-d-1$
if and only if $b$ is in the relative interior of 
$\conv(A)$.
\begin{theorem} \label{thm:vertices}
Let $C$ be a chamber of the chamber complex $\C_A$.  Then for each $b$ in the relative interior of $C$, the vertices of $Q_b$ are in bijection with $\sigma \subset [n]$ such that $C$ is contained in the relative interior of $\conv(A_\sigma)$ where the columns of $A_\sigma$ are linearly independent. The support of the vertex corresponding to such $\sigma$ is 
precisely $\sigma$. More generally, 
each face $F$ of $Q_b$ is of the form
$F = Q_b \, \cap  \underset{i \not \in \supp(F)}{\bigcap} \{p_i = 0\}$. As $b$ varies in the 
relative interior of $C$, the support of each face of $Q_b$ as well as the combinatorial type of $Q_b$ does
not change. 
\end{theorem}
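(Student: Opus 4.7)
The plan is to characterize the vertices of $Q_b = \{p \in \Delta_{n-1} : Ap = b\}$ as basic feasible solutions and then translate this characterization into the geometry of $\C_A$. First I would show that a point $v \in Q_b$ is a vertex if and only if its support $\sigma = \supp(v)$ satisfies two conditions: the columns of $A_\sigma$ are linearly independent, and the unique solution of $A_\sigma v_\sigma = b$ has strictly positive entries. Linear independence is forced because any nonzero element of $\ker A_\sigma$ would produce a line segment in $Q_b$ through $v$, contradicting extremality. Since the first row of $A$ is the all-ones vector, the two conditions $A_\sigma v_\sigma = b$ and $v_\sigma > 0$ are together equivalent to $b \in \mathrm{relint}(\conv(A_\sigma))$: $v_\sigma$ is precisely a strictly positive convex combination expressing $b$ in terms of $A_\sigma$, and uniqueness of the expression follows from linear independence.

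Next I would invoke the defining property of the chamber complex, namely that as $b$ varies through $\mathrm{relint}(C)$ the family of $\sigma \subseteq [n]$ with $A_\sigma$ of full column rank and $b \in \mathrm{relint}(\conv(A_\sigma))$ is constant, and coincides with the set of $\sigma$ for which $C \subseteq \mathrm{relint}(\conv(A_\sigma))$. Combined with the vertex characterization above this yields the stated bijection and identifies the support of each vertex with its indexing set $\sigma$. For the face description, I would use the general fact that every face of a polyhedron of the form $\{p \geq 0 : Ap = b\}$ is cut out by promoting a subset of the inequalities $p_i \geq 0$ to equalities. Hence any face $F$ satisfies $F = Q_b \cap \bigcap_{i \in T}\{p_i = 0\}$ for some $T$; the minimal such $T$ is the complement of $\supp(F)$, because a point in the relative interior of $F$ is strictly positive on exactly $\supp(F)$.

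For the invariance statement, once the first two paragraphs are in place the combinatorial type of $Q_b$ is determined by $C$: the vertex supports are constant on $\mathrm{relint}(C)$, and the face-to-support bijection expresses every face in terms of a subset of $[n]$ depending only on which vertex supports occur, so the face lattice of $Q_b$ is the same for every $b \in \mathrm{relint}(C)$. The main obstacle is the second step, making rigorous the claim that $\{\sigma : b \in \mathrm{relint}(\conv(A_\sigma))\}$ is constant on $\mathrm{relint}(C)$ when $|\sigma| < d$, since $\conv(A_\sigma)$ then has dimension below $d-1$ and contributes only to lower-dimensional chambers. Here a careful unpacking of the definition $C_b = \bigcap_{\sigma \ni b} \sigma$, together with the stratification of $\conv(A)$ discussed in \cite[Chapter 5]{DRS10}, should show that a lower-dimensional chamber is precisely the intersection of all simplices whose relative interiors contain it, closing the argument.
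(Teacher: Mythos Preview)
Your proposal is correct and follows essentially the same approach as the paper's proof: characterize vertices of $Q_b$ as basic feasible solutions of a standard-form polyhedron (the paper cites \cite[Theorem 2.4]{bertsimas-LPbook}), translate this into the condition that $C$ lie in the relative interior of $\conv(A_\sigma)$, describe faces via coordinate hyperplanes, and conclude that constancy of vertex supports forces constancy of the face lattice. The subtlety you flag about $|\sigma| < d$ is also elided in the paper's argument, so your proposed resolution via the refinement property in \cite[Chapter~5]{DRS10} is a reasonable way to close that gap.
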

\begin{proof}
The polytope $Q_b$ is a polyhedron in standard form. 
Hence, $v \in \Delta_{n-1}$ is a vertex of $Q_b$ if and only if
$Av = b$ where there exists $\sigma \subset [n]$
such that the columns of $A_\sigma$ are linearly independent, and $i \not \in \sigma$ implies $v_i=0$; see
\cite[Theorem 2.4]{bertsimas-LPbook}. 
This is equivalent to $C \subset \conv(A_\sigma)$. The extra condition that $C$ is contained in the relative
interior of $\conv(A_\sigma)$ is equivalent to $\supp(v) = \sigma$. More generally, each face $F$ of $Q_b$ is
defined by some subset of coordinate hyperplanes $p_i =0$. Since $\supp(F)$ is the union of the supports
of all the vertices on $F$ we conclude that 
$F = Q_b \, \cap  \underset{i \not \in \supp(F)}{\bigcap} \{p_i = 0\}$. By the first part of this theorem, as $b$ varies in the relative interior of $C$, 
the support of each vertex does not change, and hence
the support of each face does not change. Since each face
is determined by the set of vertices contained in that face this implies that the face lattice of $Q_b$ is constant, i.e. every $Q_b$ has the same combinatorial type.
\end{proof}
\begin{example} \label{ex:chamber-pentagon}
Let 
$$ A = \left( \begin{array}{ccccc}
          1 & 1 & 1 & 1 & 1 \\
          0 & 1 & 2 & 3 & 2 \\
          1 & 0 & 0 & 1 & 2 \end{array} \right),$$
where we denote the columns of $A$ by $a, b, c, d$, and 
$e$. Here $\conv(A)$ is a pentagon which, together
with its chamber complex $\C_A$, can be 
seen in Figure \ref{fig:chamber-complex-pentagon}. 
This chamber complex consists of $10$ vertices, $20$ edges, and $11$ two-dimensional chambers. For some
chambers $C$ we have depicted the logarithmic 
Voronoi polytopes $Q_b$ where $b$ is in the relative interior of $C$. For instance, the horizontal
(red) edge of the pentagonal chamber supports  logarithmic Voronoi polytopes that are quadrangles. The supports of their vertices are $\{a,d\}, \{a,c,e\}, \{b,c,e\}$, and $\{b,d,e\}$ because $C$ is contained
in the relative interiors of $\conv(A_{a,d})$, 
$\conv(A_{a,c,e})$, $\conv(A_{b,c,e})$, and $\conv(A_{b,d,e})$.  
\end{example}

\begin{center}
    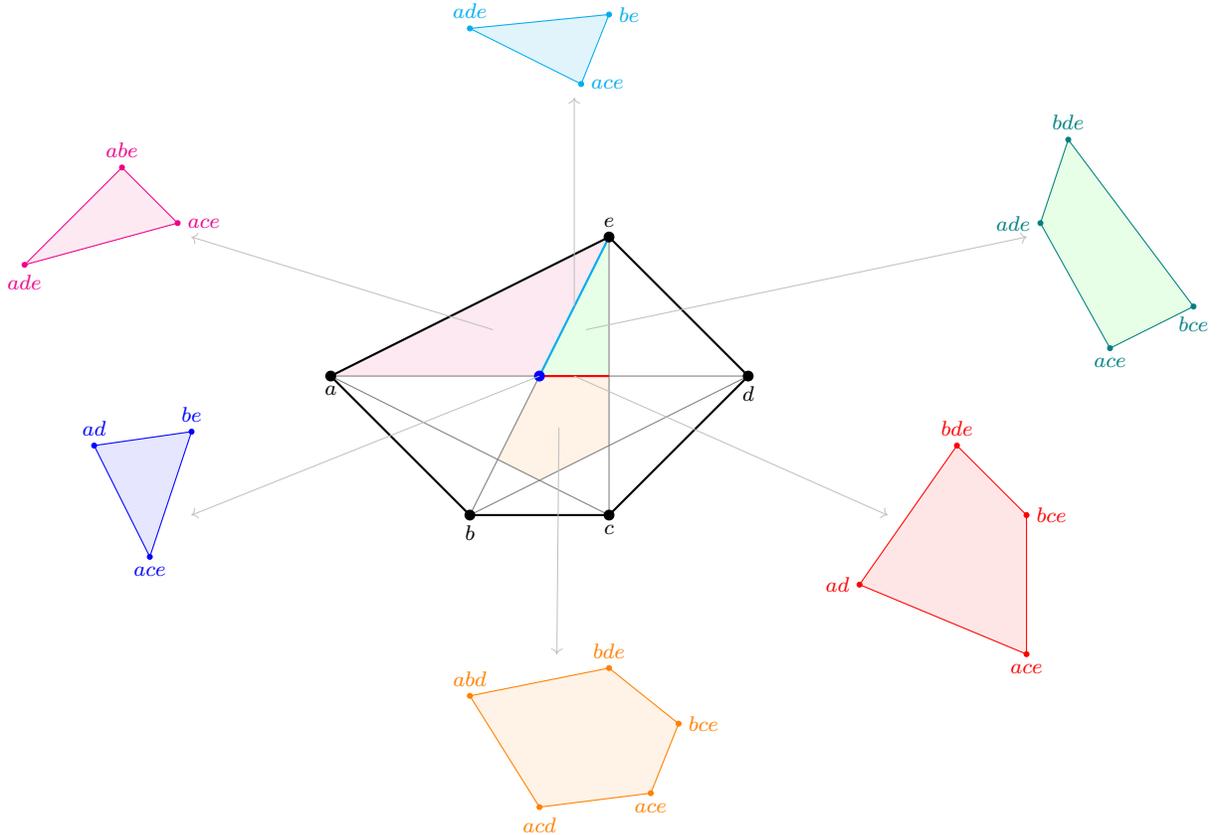
\begin{figure}[H]
        \centering
\begin{tikzpicture}[scale=1.85]

\draw[color=magenta!10,fill=magenta!10,thin] (0,1) -- (2,2) -- (3/2,1);
\draw[color=green!10,fill=green!10,thin] (2,2) -- (3/2,1) -- (2,1);
\draw[color=orange!10,fill=orange!10,thin]  (2, 1/2) -- (2, 1) -- (3/2, 1) --(6/5, 2/5) -- (3/2, 1/4);


\draw[black, thick] (0,1) -- (1,0);
\draw[black, thick] (1,0) -- (2,0);
\draw[black, thick] (2,0) -- (3,1);
\draw[black, thick] (3,1) -- (2,2);
\draw[black, thick] (0,1) -- (2,2);

\draw[gray, thin] (0,1) -- (2,0);
\draw[gray, thin] (0,1) -- (3,1);
\draw[gray, thin] (3,1) -- (1,0);
\draw[gray, thin] (2,2) -- (1,0);
\draw[gray, thin] (2,2) -- (2,0);

\filldraw[black] (0,1) circle (1pt) node[anchor=north] {\scriptsize{$a$}};;
\filldraw[black] (1,0) circle (1pt) node[anchor=north] {\scriptsize{$b$}};
\filldraw[black] (2,0) circle (1pt) node[anchor=north] {\scriptsize{$c$}};
\filldraw[black] (3,1) circle (1pt) node[anchor=north] {\scriptsize{$d$}};

\draw[color=red!100,thick] (3/2,1) -- (2,1);
\draw[color=cyan!100,thick] (2,2) -- (3/2,1);

\filldraw[black] (2,2) circle (1pt) node[anchor=south] {\scriptsize{$e$}};

\filldraw[blue] (3/2,1) circle (1pt);

\draw[->, color=gray!50, very thin] 
(7/6, 4/3) -- (-1,2);
\draw[->, color=gray!50, very thin] 
(11/6, 4/3)-- (5,2);
\draw[->, color=gray!50, very thin] 
(7/4, 3/2)-- (7/4,3);
\draw[->, color=gray!50, very thin] 
(7/4, 1)-- (4,0);
\draw[->, color=gray!50, very thin]
(3/2, 1)-- (-1,0);
\draw[->, color=gray!50, very thin]
(41/25, 63/100)-- (3.25/2,-1);

\draw[color=magenta,fill=magenta!10,thin] (-1.1,2.1) -- (-1.5,2.5) -- (-2.2,1.8) --(-1.1,2.1);
\filldraw[magenta] (-1.1,2.1) circle (0.5pt) node[anchor=west] {\scriptsize{$ace$}};
\filldraw[magenta] (-1.5,2.5) circle (0.5pt) node[anchor=south] {\scriptsize{$abe$}};
\filldraw[magenta] (-2.2,1.8) circle (0.5pt) node[anchor=north] {\scriptsize{$ade$}};

\draw[color=teal,fill=green!10,thin] (5.1,2.1) -- (5.3,2.7) -- (6.2,1.5) -- (5.6,1.2) -- (5.1,2.1);
\filldraw[teal] (5.1,2.1) circle (0.5pt) node[anchor=east] {\scriptsize{$ade$}};
\filldraw[teal] (5.3,2.7) circle (0.5pt) node[anchor=south] {\scriptsize{$bde$}};
\filldraw[teal] (6.2,1.5) circle (0.5pt) node[anchor=north] {\scriptsize{$bce$}};
\filldraw[teal] (5.6,1.2) circle (0.5pt) node[anchor=north] {\scriptsize{$ace$}};

\draw[color=cyan,fill=cyan!10,thin] (1.8,3.1) -- (2,3.6) -- (1,3.5) -- (1.8,3.1);
\filldraw[cyan] (1.8,3.1) circle (0.5pt) node[anchor=west] {\scriptsize{$ace$}};
\filldraw[cyan] (2,3.6) circle (0.5pt) node[anchor=west] {\scriptsize{$be$}};
\filldraw[cyan] (1,3.5) circle (0.5pt) node[anchor=south] {\scriptsize{$ade$}};

\draw[color=orange,fill=orange!10,thin] (2,-1.1) -- (2.5,-1.5) -- (2.3,-2) -- (1.5,-2.1) -- (1,-1.3) --(2,-1.1);
\filldraw[orange] (2,-1.1) circle (0.5pt) node[anchor=south] {\scriptsize{$bde$}};
\filldraw[orange] (2.5,-1.5) circle (0.5pt) node[anchor=west] {\scriptsize{$bce$}};
\filldraw[orange] (2.3,-2) circle (0.5pt) node[anchor=north] {\scriptsize{$ace$}};
\filldraw[orange] (1.5,-2.1) circle (0.5pt) node[anchor=north] {\scriptsize{$acd$}};
\filldraw[orange] (1,-1.3) circle (0.5pt) node[anchor=south] {\scriptsize{$abd$}};

\draw[color=blue,fill=blue!10,thin] (-1.3,-0.3) -- (-1.7,0.5) -- (-1,0.6) -- (-1.3,-0.3);
\filldraw[blue] (-1.3,-0.3) circle (0.5pt) node[anchor=north] {\scriptsize{$ace$}};
\filldraw[blue] (-1.7,0.5) circle (0.5pt) node[anchor=south] {\scriptsize{$ad$}};
\filldraw[blue] (-1,0.6) circle (0.5pt) node[anchor=south] {\scriptsize{$be$}};

\draw[color=red,fill=red!10,thin] (3.8,-0.5) -- (4.5,0.5) -- (5,0) -- (5,-1) -- (3.8,-0.5);
\filldraw[red] (3.8,-0.5) circle (0.5pt) node[anchor=east] {\scriptsize{$ad$}};
\filldraw[red] (4.5,0.5) circle (0.5pt) node[anchor=south] {\scriptsize{$bde$}};
\filldraw[red] (5,0) circle (0.5pt) node[anchor=west] {\scriptsize{$bce$}};
\filldraw[red] (5,-1) circle (0.5pt) node[anchor=north] {\scriptsize{$ace$}};

\end{tikzpicture} \;\;\;\;\;\; 
        \caption{The chamber complex of a pentagon.}
        \label{fig:chamber-complex-pentagon}
\end{figure}
\end{center}

\begin{remark}
Although each chamber of $\C_A$ gives rise to logarithmic Voronoi polytopes of $\M_A$ that have
the same combinatorial type, different chambers might
yield identical combinatorial types. For instance, 
in Example \ref{ex:chamber-pentagon} we see that there 
are multiple chambers that support logarithmic Voronoi
polytopes that are triangles or quadrangles. In fact,
$\C_A$ parametrizes these polytopes according to a finer invariant, namely, the normal fan of each polytope. 
We will not directly need this finer differentiation, though
we will use the fact from Theorem \ref{thm:vertices} that the supports of the faces of $Q_b$ given by $b$ in 
a fixed chamber are constant. 
\end{remark}
\begin{remark} \label{rmk:compute-chambers}
In the algorithm we present, first we have to compute
the chamber complex $\C_A$. Using Definition \ref{def:chamber-complex} for this computation is highly inefficient. Here is an outline for a more efficient way. First, one computes a Gale transform $B$ of $A$ where $B$ is a $(n-d)\times n$ matrix whose rows form
a basis for the kernel of $A$. Then the secondary 
fan $\Pi_B$ of $B$ is computed. This is a complete
fan in $\RR^n$ in which each cone consists of weight
vectors that induce the same regular subdivision of 
the vector configuration given by the $n$ columns of $B$.
The cones of the secondary fan $\Pi_B$ are in bijection with the chambers of $\C_A$. More precisely, if 
$u_1, \ldots, u_k$ are the generators of a cone in $\Pi_B$ the corresponding chamber in $\C_A$ is the convex
hull of $Au_1, \ldots, Au_k$. The details can be found in \cite[Section 5.4]{DRS10}; in particular, for the claimed bijection see Theorem 5.4.5 in the same reference. We used {\em Gfan} \cite{gfan} to compute $\Pi_B$ which can also be accessed via {\em Macaulay 2} \cite{M2}.
\end{remark}

\begin{example} \label{ex:2x3x3chamber}
As the matrix $A$ gets larger, all of these computations become challenging. To give an idea, we consider the toric model $\M_A$ that is the independence model of a binary and two ternary random variables. It is a $5$-dimensional model in $\Delta_{17}$. The $f$-vector of 
the $5$-dimensional polytope $\conv(A)$ is
$(18, 45, 48, 27, 8)$, i.e., this polytope has $18$ vertices, $45$ edges, etc. The chamber complex $\C_A$
that was computed via the methods outlined in Remark 
\ref{rmk:compute-chambers} has the $f$-vector
$$(3503407, \, 33084756, \, 105341820, \, 151227738, \, 100828884, \, 25361616).$$
The computation took about two days on a standard laptop, and it could only be done after taking into account the symmetries of $\conv(A)$. We note that, luckily, this computation needs to be done only once, and
once $\C_A$ is computed, its chambers have to be processed as we will explain in our algorithm. This processing can be shortened by considering the symmetries
of the chamber complex (if there are any) as well as 
by using a few simple observations on the structure 
of the supports of the vertices of the logarithmic Voronoi polytopes. We will outline these ideas below.
\end{example}

According to Theorem \ref{thm:critical-projection}, given
a logarithmic Voronoi polytope $Q_b$ where $b \in \conv(A)$, we need to identify complementary vertices of $Q_b$ and decide whether any of these vertices are projection points. These, in turn, are potential local and global maximizers of $D_{\M_A}$. The following proposition gives a way to decide whether a complementary vertex is a projection point. 
\begin{prop} \label{prop:check-projection-point}
Let $v$ be a complementary vertex of the logarithmic
Voronoi polytope $Q_b$ of a toric model $\M_A$ with the complementary face $F$. Let $\L_{v,F}$ be the collection 
of the lines passing through $v$ and each point on $F$.
Then $v$ is a projection point if and only if $\L_{v,F}$
intersects $\M_A$.
\end{prop}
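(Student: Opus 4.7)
The plan is to reduce the claim to the characterization of projection points from Theorem \ref{thm:critical-projection}: namely, $v$ is a projection point if and only if the line through $v$ and $q$ meets the relative interior of the complementary face $F$, where $q \in \M_A$ is the unique point with $Aq = b$ (which exists by Birch's theorem).

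First I would observe that every line $\ell \in \L_{v,F}$ lies inside the affine span of $Q_b$, which is the affine subspace $\{p \in \RR^n : Ap = b\}$, because both points determining $\ell$ (namely $v \in Q_b$ and some $f \in F \subset Q_b$) satisfy $Ap = b$. By Birch's theorem, this affine subspace meets $\M_A$ in the single point $q$. Consequently, $\L_{v,F} \cap \M_A$ is nonempty if and only if some line in $\L_{v,F}$ passes through $q$. Since $v \neq q$ determines a unique line, this is equivalent to the line through $v$ and $q$ meeting $F$.

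The key remaining step is to show that whenever the line through $v$ and $q$ meets $F$, the intersection point automatically lies in the relative interior of $F$. Parametrize the line as $(1-t)v + tq$. Since $\supp(v)$ and $\supp(F) = [n] \setminus \supp(v)$ partition $[n]$, for each $i \in \supp(v)$ the $i$-th coordinate vanishes precisely at $t_i = v_i/(v_i - q_i)$; for the point to lie in $F$, these values must coincide across all $i \in \supp(v)$, giving a common parameter $t^*$. The facts that $v_i > 0$ and $q_i > 0$ force $t^*$ to be either negative or strictly greater than $1$. The remaining coordinates equal $t^* q_i$ for $i \in \supp(F)$, and since the intersection point lies in $\Delta_{n-1}$ these must be nonnegative; combined with $q_i > 0$ (recall $\supp(q) = [n]$), this forces $t^* > 1$. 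Hence all coordinates indexed by $\supp(F)$ are strictly positive, which is exactly the condition for the intersection point to lie in the relative interior of $F$.

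Combining these two observations with Theorem \ref{thm:critical-projection} yields the equivalence. The main subtlety is the relative-interior argument in the last step; the reduction via Birch's theorem is immediate from the fact that lines in $\L_{v,F}$ stay in the fiber $\{Ap = b\}$.
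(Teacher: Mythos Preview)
Your proof is correct and follows essentially the same approach as the paper's: both arguments observe that every line in $\L_{v,F}$ lies in the fiber $\{Ap = b\}$, which by Birch's theorem meets $\M_A$ only at the MLE $q$, so the question reduces to whether $q$ lies on one of these lines. Your proof adds an explicit verification that the intersection with $F$ automatically lands in the relative interior (needed because you invoke Theorem~\ref{thm:critical-projection}), whereas the paper's two-line argument can be read as appealing directly to Definition~\ref{def:projection-point}: if $q = (1-s)v + sf$ with $f \in F$, then $q_i = (1-s)v_i$ for $i \in \supp(v)$, and summing gives $1-s = \sum_{j \in \supp(v)} q_j$, recovering the projection-point formula without a separate relative-interior step.
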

\begin{proof}
By Birch's theorem, $Q_b$ intersects $\M_A$ in a single
point, namely, the MLE $q$ of any point $p$ in $Q_b$. 
The vertex $v$ is a projection point if and only if one of the lines in $\L_{v,F}$ passes through $q$. This happens  if and only if $\L_{v,F}$ intersects $\M_A$ in the only possible point $q$.
\end{proof}
In light of Proposition \ref{prop:check-projection-point}, to check whether a complementary vertex $v$ of $Q_b$ is a projection point reduces to an algebraic computation. Let $F$ be the complementary face 
of dimension $k$. Then $\overline{\L_{v,F}}$, the Zariski closure of $\L_{v,F}$, is an affine 
subspace of dimension $k+1$ whose defining 
equations can easily be computed. For instance, if
$v_1, \ldots, v_{k+1}$ are vertices of $F$
that are affinely independent, then $\overline{\L_{v,F}}$
is the image of the map 
$$ (s, t_1, \ldots, t_{k+1}) \mapsto sv + (1-s)(t_1v_1 + \cdots + t_{k+1}v_{k+1})$$
 where $t_1 + \ldots + t_{k+1} =1$. To intersect 
$\overline{\L_{v,F}}$ with $\M_A$ we use the equations of $\overline{\L_{v,F}}$
and the binomial equations defining the toric variety
$X_A$. Since $\overline{\L_{v,F}}$ is contained in the affine span
of $Q_b$, and since the latter affine subspace intersects
$X_A$ in finitely many complex points (see Definition \ref{def:ml-degree}), $\overline{\L_{v,F}}$ intersects $X_A$ also in finitely many points. They can be computed using a numerical algebraic geometry
software such as {\em Bertini} \cite{Bertini} or {\em HomotopyContinuation.jl} \cite{HomotopyContinuation.jl}.
Finally, one checks whether this finite set contains
a point with positive coordinates. 

\begin{example} \label{ex:pentagon2}
We use Example \ref{ex:chamber-pentagon}. The point
$b= (1, 7/4, 1)$ is the midpoint of the horizontal
(red) edge of the pentagonal chamber. The vertex 
$v = (5/12, 0, 0, 7/12, 0)$ of $Q_b$ is complementary to 
another vertex $v_1 = (0, 1/4, 1/4, 0, 1/2)$. 
The toric variety $X_A$ is defined by the equations
$$ p_2^2p_4^2-p_3^3p_5 \, = \,  p_1p_3^3-p_2^3p_4 \, = \,  p_1p_4-p_2p_5 \, = \, 0.$$
The affine subspace spanned by $v$ and $v_1$ is just 
a line defined by 
$$ 12p_4 + 14p_5 - 7 \, = \,  2p_3 - p_5 \, = \, 2p_2 - p_5 \, = \, 12p_1 + 10p_5 - 5 \, = \, 0.$$
The intersection of $X_A$ with $\overline{\L_{v,\{v_1\}}}$ is
empty. Hence, we conclude that $v$ is not a projection
point.
\end{example}
The above discussion describes a way of checking 
whether a complementary vertex of a {\it fixed} logarithmic Voronoi polytope $Q_b$ is a projection point. Next, we describe how to accomplish the same task
for a complementary vertex of $Q_b$ as $b$ varies 
in the interior of a fixed chamber $C$ in the chamber complex $\C_A$.  By Theorem \ref{thm:vertices} each
such $Q_b$ has the same combinatorial type and the support of any face of $Q_b$ stays constant. Now let $(v(b), F(b))$ be a pair of a complementary vertex
and its corresponding complementary face in $Q_b$ where $b$ is in the relative interior of a chamber $C$. 
Let $w_1, \ldots, w_m$ be the vertices of $C$. Then
$b = \sum_{i=1}^m r_iw_i$ where $\sum_{i=1}^m r_i = 1$
and $r_i \geq 0$ for all $i=1,\ldots, m$. This 
means that the coordinates of $v(b)$ and those of the
vertices $v_1(b), \ldots, v_z(b)$ of $F(b)$ are linear functions of $r_1, \ldots, r_m$. Next, we parametrize
a general point $w(b)$ on $F(b)$ via
$w(b) = \sum_{i=1}^z t_iv_i(b)$ where $\sum_{i=1}^z t_i =1$ and $t_i \geq 0$ for all $i=1,\ldots, z$. 
Finally, the line segment between $v(b)$ and $w(b)$
is parametrized by $s v(b) + (1-s)w(b)$ where 
$0 \leq s \leq 1$. The last expression gives points
in $\Delta_{n-1}$ where each coordinate is a polynomial
in the parameters $r_1, \ldots, r_m$, $t_1, \ldots, t_z$,
and $s$, and it defines the map 
$$\Psi_{v,F} \, : \, \Delta_{m-1} \times \Delta_{z-1} \times \Delta_1 \longrightarrow \Delta_{n-1}.$$
Proposition \ref{prop:check-projection-point} implies
that $v(b)$ is a projection point for some $b \in C$
if and only if the image of $\Psi_{v,F}$ intersects 
$\M_A$. Again, this boils down to an algebraic computation. We substitute the coordinates
of $s v(b) + (1-s)w(b)$ into the equations defining 
$X_A$, check whether this system of equations 
has solutions in $\CC^{m + z +1}$, and if there 
are any, compute $\mathrm{im} \Psi_{v,F} \cap \M_A$
by imposing the positivity constraints on the solution set. The resulting semi-algebraic set 
is then the feasible region over which  $D_{\M_A}$ can
be maximized to identify local maximizers with support
equal to the support of $v(b)$. Finally, we locate
the global maximizer(s) among these local maximizers
contributed by each chamber $C$ of the chamber
complex $\C_A$ that supports projection points. We summarize this in a high-level algorithm.

\noindent\textbf{Algorithm:}

\textbf{Input:} $A \in \NN^{d \times n}$ that defines a toric model $\M_A \subset \Delta_{n-1}$  of dimension $d-1$.\\
\textbf{Output:} All maximizers of $D_{\M_A}$.

\vspace{-1em}
\begin{enumerate}
\itemsep0em 
    \item Compute the equations of the toric variety $X_A$.
    \item Compute the chamber complex $\C_A$.
    \item For each chamber $C$ in $\C_A$ do:
    \begin{itemize}
    \item[a)] for any fixed $\hat{b}$ in the relative interior of $C$ compute the face lattice of $Q_{\hat{b}}$ and identify complementary vertex/face pairs $(v,F)$;
    \item[b)] for each $(v,F)$ do:
      \begin{itemize}
          \item[i.] compute the parametrization $\Psi_{v,F}$ and substitute it into the equations of $X_A$;
          \item[ii.] if the resulting algebraic set in $\CC^{m+z+1}$ is nonempty then 
          \begin{itemize}
          \item compute the semi-algebraic set $\mathrm{im} \Psi_{v,F} \cap \M_A$ by imposing
          positivity constraints on the parameters in 
          $\Psi_{v,F}$;
          \item find the maximizers $D_{C,v,F}$ of $D_{\M_A}$ over $\mathrm{im} \Psi_{v,F} \cap \M_A$.
        \end{itemize}
      \end{itemize}
    \end{itemize}
\item Identify global maximizers of $D_{\M_A}$ by
comparing all $D_{C,v,F}$.
\end{enumerate}
\begin{example}
We illustrate this algorithm using the toric model of Example \ref{ex:chamber-pentagon}. The equations of $X_A$
are the three polynomials computed in Example \ref{ex:pentagon2}. The chamber complex $\C_A$ is the polytopal complex in Figure \ref{fig:chamber-complex-pentagon}. The chambers which support complementary
vertices are the (relative interior of) the boundary
edges of the pentagonal chamber. Step 3 is executed
only for these chambers. For instance, the horizontal
edge is the convex hull of its vertices $(3/2,1)$ and
$(2,1)$, and the unique complementary vertex/face pair
$(v,F)$ is given by vertex $v$ with support $\{a,d\}$ and  the vertex $F$ with support $\{b,c,e\}$. 
We note that for such pair of complementary {\it vertices} $(v, \{w\})$ we do not need to consider 
the pair $(w,\{v\})$ in the next computation.
The parametrization $\Psi_{v,F}$ is 
given by
$$ (r_1, s) \mapsto \left(s(\frac{1}{6}r_1+\frac{1}{3}), \, (1-s)\frac{r_1}{2}, \, (1-s)\frac{1-r_1}{2}, \, s(-\frac{1}{6}r_1 + \frac{2}{3}), \, \frac{1-s}{2} \right),$$
where we are parametrizing $b$ on this edge
by $r_1(3/2, 1) + (1-r_1)(2,1)$. Substituting 
$\Psi_{v,F}$ into the equations of $X_A$ results
in 
\footnotesize
\begin{align*}
    s^2r_1^2+7s^2r_1-8s^2-18sr_1+9r_1 =  0 \hspace{30.5em}\\
    197s^4r_1-194s^4-1401s^3r_1-3sr_1^3+1014s^3+4398s^2r_1+246sr_1^2-2094s^2-5837sr_1-81r_1^2+2s+2349r_1 & =0\\
     885s^4-31312s^3r_1-294sr_1^3+32392s^3+179435s^2r_1+17016sr_1^2
    -117350s^2-295438sr_1-6165r_1^2+2560s&\\+129141r_1-591 &= 0.
\end{align*}

\normalsize
This is a zero-dimensional system that has $11$ solutions
which we have computed using {\it Bertini}. Four of these are complex and seven
are real. There is a unique real solution where 
$0 < r_1, s < 1$, namely
$$ r_1 = 0.4702953126494577 \mbox{ and }  s= 0.4106301713351522.$$
The corresponding KL-divergence at the vertex $v$ is $0.890062259952966$. At the vertex $w$, the divergence is $0.528701425022976$.
For each of the remaining four edges of this pentagonal chamber we also get a pair of projection vertices with corresponding KL-divergences equal to 
\begin{align*}
&0.729916767214609\text{ and }0.657681783609608\\
&0.736523721240758\text{ and }0.651574202843057\\
&0.927851227501820\text{ and }0.503192212618303\\
&0.856820834934792\text{ and }0.552532602066626.
\end{align*}
The global maximizer is the vertex
$$v = (0, 0.6722451790633609, 0, 0, 0.3277548209366391)$$ corresponding to the divergence value 0.927851227501820. It is a vertex of the logarithmic Voronoi polytope $Q_b$ where $b = (1.3277548209366392, 0.6555096418732783)$ lies on the edge of the pentagonal chamber contained in the line segment between $(1,0)$ and $(2,2)$.
\end{example}

The basic algorithm above can be improved on
many fronts. We will now present some ideas for such improvements.

For Step 1, one could replace the equations of $X_A$, which 
could be challenging to compute for large models, with $n-d$
equations corresponding to a basis of $\ker_\ZZ(A)$. Let $B$
be an $(n-d) \times n$ matrix whose rows $b_i$, $i=1, \ldots, n-d$ form such a basis. The lattice basis ideal 
$$I_B = \langle \prod_{j=1}^n p_j^{b_{ij}^+} - \prod_{j=1}^n p_j^{b_{ij}^-}, \,\,\ i=1,\ldots, n-d \rangle$$
where $b_i = b_i^+ - b_i^-$ with $b_i^+, b_i^- \geq 0$ and
$\supp(b_i^+) \cap \supp(b_i^-) = \emptyset$ defines a variety
$Y_B$ containing $X_A$. In fact, $Y_B$ is the union of 
$X_A$ together with varieties contained in various coordinate
subspaces defined by setting a subset of coordinates equal to zero (see \cite[Section 8.3]{S02} and \cite{HS00}). 
This means that $\M_A^{>0} = X_A \cap \Delta_{n-1}^\circ$ 
is equal to $Y_B \cap \Delta_{n-1}^\circ$. This is what is 
ultimately needed in Step 3.b.ii. 

For Step 2, Example \ref{ex:2x3x3chamber} illustrated that computing $\C_A$ might be out of reach due to the combinatorial explosion in the number of chambers. However, one does
not need to compute $\C_A$ all at once. It can be computed
one chamber at a time. This is how a software like {\it Gfan} 
\cite{gfan} internally computes $\C_A$ based on reverse
search enumeration \cite{AF96}. In this case, Step 3 can 
be executed as chambers get computed. 

In Step 3, not all chambers need to be considered. For instance, any chamber that is contained in the boundary of 
$\conv(A)$ can be skipped: if $b$ is in such a chamber, the 
logarithmic Voronoi polytope $Q_b$ is contained in the boundary of $\Delta_{n-1}$. Such $Q_b$ does not contribute 
global maximizers of $D_{\M_A}$. There are also ways to eliminate chambers since they cannot contain complementary vertices. We present a few ways this can be done. 

\begin{prop} \label{prop:chamber-boundary}
Suppose $\conv(A)$ is a simplicial polytope where
each column of $A$ is a vertex. Let $C$ be a chamber that intersects the boundary  as well as the interior of $\conv(A)$. Then for any $b$ that is in the relative interior of $C$, the logarithmic Voronoi polytope $Q_b$ does not contain complementary vertices. 
\end{prop}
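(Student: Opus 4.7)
The plan is to argue that whenever the chamber $C$ meets the boundary of $\conv(A)$, every vertex support of $Q_b$ is confined to a proper subset of $[n]$, and this alone rules out the existence of any complementary vertex.

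First, I would pick a boundary point $b_0 \in C$, which exists by hypothesis. Since $\conv(A)$ is simplicial with vertex set $\{a_1, \ldots, a_n\}$, the point $b_0$ lies in the relative interior of a unique proper face $\conv(A_S)$ with $S \subsetneq [n]$. Next, let $\sigma$ be the support of any vertex of $Q_b$. By Theorem \ref{thm:vertices}, $C \subset \operatorname{relint}(\conv(A_\sigma))$, so $b_0 \in \operatorname{relint}(\conv(A_\sigma))$, meaning $b_0$ can be written as a strict convex combination of $\{a_i : i \in \sigma\}$. The defining property of the face $\conv(A_S)$ of $\conv(A)$ then forces each $a_i$ with $i \in \sigma$ to lie in $\conv(A_S)$. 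Because every column of $A$ is a vertex of the simplicial polytope $\conv(A)$, the face $\conv(A_S)$ has vertex set precisely $\{a_i : i \in S\}$, so $a_i \in \conv(A_S)$ implies $i \in S$. Hence $\sigma \subset S$, and since each face support of $Q_b$ is a union of supports of its vertices, every face support of $Q_b$ is also contained in $S$.

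Finally, if $v$ were a complementary vertex of $Q_b$ with support $\sigma$ and complementary face $F$ with $\supp(F) = [n] \setminus \sigma$, then both $\sigma \subset S$ and $[n] \setminus \sigma \subset S$ would hold, forcing $S = [n]$ and contradicting $S \subsetneq [n]$. The main delicacy is that the implication $a_i \in \conv(A_S) \Rightarrow i \in S$ genuinely needs both the simplicial hypothesis and the assumption that every column of $A$ is a vertex; without these, a column $a_i$ could sit on the face $\conv(A_S)$ without its index belonging to $S$, which would break the inclusion $\sigma \subset S$ at the heart of the argument.
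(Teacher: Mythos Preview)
Your argument has a genuine gap. You claim that every vertex support $\sigma$ of $Q_b$ satisfies $\sigma \subset S$, where $S$ indexes the proper face of $\conv(A)$ containing your boundary point $b_0$. But this conclusion is false: since $b \in \operatorname{relint}(C)$ lies in the interior of $\conv(A)$ and $b \in \operatorname{relint}(\conv(A_\sigma))$, the simplex $\conv(A_\sigma)$ is full-dimensional and cannot be contained in the boundary face $\conv(A_S)$. Concretely, in the pentagon of Example~\ref{ex:chamber-pentagon}, for $b$ in the magenta chamber the vertex supports are $\{a,c,e\}$, $\{a,b,e\}$, $\{a,d,e\}$, while the relevant boundary face is the edge $ae$ with $S=\{a,e\}$; none of the supports is contained in $S$.

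The error enters when you deduce $b_0 \in \operatorname{relint}(\conv(A_\sigma))$ from Theorem~\ref{thm:vertices}. Although that theorem is phrased as ``$C$ is contained in the relative interior of $\conv(A_\sigma)$'', its actual content (visible in its proof) is that $b \in \operatorname{relint}(\conv(A_\sigma))$ for $b$ in the \emph{relative interior} of $C$; passing to the closure only yields $b_0 \in \conv(A_\sigma)$ for a boundary point $b_0$ of $C$. Thus the convex combination expressing $b_0$ in terms of $\{a_i:i\in\sigma\}$ need not be strict, and your face argument cannot force all of $\sigma$ into $S$.

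The paper's proof runs the containment in the opposite direction. It notes that the columns $a_{i_1},\dots,a_{i_k}$ spanning the boundary simplex $C\cap\partial\conv(A)$ belong to $C$ and hence to every $\conv(A_\sigma)$; since each $a_{i_j}$ is a vertex of $\conv(A)$, it must equal one of the columns indexed by $\sigma$, giving $\{i_1,\dots,i_k\}\subset\sigma$. All vertex supports therefore share this nonempty set, which immediately rules out complementary vertices.
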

\begin{proof}
The intersection of $C$ with the boundary of $\conv(A)$ is a simplex spanned
by a subset of columns of $A$, say $A_{i_1}, \ldots, A_{i_k}$.
Then the support of every vertex of $Q_b$ contains
$\{i_1, \ldots, i_k\}$. This disallows the existence
of complementary vertices. 
\end{proof}
Note, for instance, in our running Example \ref{ex:chamber-pentagon}, it is enough to consider the pentagonal chamber and
its faces by the above proposition. In fact, the interior
of this chamber does not have to be considered either for
the following reason. 
\begin{prop}\label{prop:support-reasons-dimension}
Let $C$ be a chamber of  dimension $k$ where $k+1 > n/2$. 
Then for any $b$ that is in the relative interior of $C$, the logarithmic Voronoi polytope $Q_b$ does not contain complementary vertices. 
\end{prop}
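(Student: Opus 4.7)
The plan is to derive a lower bound on the support size of every vertex of $Q_b$ from the dimension of the chamber $C$, and then use the hypothesis $k+1 > n/2$ to show this bound precludes the existence of a face whose support is the complement of the support of any vertex.

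First, I would invoke Theorem \ref{thm:vertices}: for $b$ in the relative interior of $C$, each vertex of $Q_b$ has support equal to some $\sigma \subseteq [n]$ such that the columns of $A_\sigma$ are linearly independent and $C$ is contained in the relative interior of $\conv(A_\sigma)$. Because the first row of $A$ is the all-ones vector, linear independence of the columns of $A_\sigma$ is equivalent to affine independence of the columns after dropping that first coordinate, so $\conv(A_\sigma)$ is a simplex of dimension exactly $|\sigma|-1$. The containment of $C$ in its relative interior then forces $|\sigma|-1 \geq \dim C = k$, hence $|\sigma| \geq k+1$. Thus every vertex of every $Q_b$, with $b$ in the relative interior of $C$, has support of size at least $k+1$.

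Next, suppose for contradiction that $v$ is a complementary vertex of $Q_b$, with support $\sigma$ and complementary face $F$ satisfying $\supp(F) = [n] \setminus \sigma$. Since $F$ is a (nonempty) face of the polytope $Q_b$, it contains some vertex $v'$, and by definition of $\supp(F)$ as the union of the supports of the vertices on $F$, we have $\supp(v') \subseteq [n] \setminus \sigma$. Then
$$|\supp(v')| \leq |[n] \setminus \sigma| = n - |\sigma| \leq n - (k+1).$$
On the other hand, by the first paragraph $|\supp(v')| \geq k+1$. Combining these, $k+1 \leq n - (k+1)$, i.e., $2(k+1) \leq n$, contradicting the assumption $k+1 > n/2$. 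Hence no complementary vertex can exist.

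I do not anticipate a serious obstacle; the only mild subtlety is justifying that a linearly independent $A_\sigma$ yields a simplex of dimension exactly $|\sigma|-1$, which follows cleanly from the all-ones first row. The hypothesis $k+1 > n/2$ is used in exactly one place, to force the contradiction between the lower bound $|\supp(v')| \geq k+1$ and the upper bound $|\supp(v')| \leq n - (k+1)$.
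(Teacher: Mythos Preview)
Your proof is correct and follows essentially the same approach as the paper: both use Theorem \ref{thm:vertices} to bound each vertex's support size below by $k+1$, then observe that a complementary vertex would force two vertices with disjoint supports, yielding $2(k+1)\leq n$. Your version simply spells out a few details (the simplex dimension and the passage from the complementary face to one of its vertices) that the paper leaves implicit.
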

\begin{proof}
    By Theorem \ref{thm:vertices}, each of the vertices of $Q_b$ has support of size at least $k+1$. If a vertex $v$ of $Q_b$ is complementary there must exist a vertex $w$ such that $\supp(v)\cap \supp(w)=\varnothing$. Such two vertices can only exist when $2(k+1) \leq n$. 
\end{proof}
\begin{prop} \label{prop:support-on-the-boundary}
 If $(v,F)$ is a pair of a complementary vertex and
 its complementary face $F$ where both $v$ and $F$ are 
 contained in the same facet $F'$ of $Q_b$, then $v$ cannot be a projection point.
\end{prop}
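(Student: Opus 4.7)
My approach is to invoke Theorem \ref{thm:critical-projection}, which characterizes projection points as those complementary vertices $v$ for which the line through $v$ and the MLE $q$ meets the relative interior of the complementary face $F$. The geometric idea is that if $v$ and $F$ both sit inside a single facet $F'$ of $Q_b$, then the segment from $v$ to $q$ exits $F'$ immediately and cannot re-enter $F \subseteq F'$ at any point other than $v$ itself, which does not belong to $F$.

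To carry this out, I would first apply Theorem \ref{thm:vertices} to write $F' = Q_b \cap \bigcap_{i \notin \supp(F')} \{p_i = 0\}$. Since $F'$ is a proper face of $Q_b$, there is some index $i \notin \supp(F')$, so $v_i = 0$ and every point of $F$ has $i$-th coordinate zero. Because $q$ has full support (as required by Definition \ref{def:projection-point}), we have $q_i > 0$. Parametrizing the line as $\ell(t) = (1-t)v + tq$, its $i$-th coordinate is $tq_i$, which vanishes only at $t = 0$. Hence $\ell$ meets the hyperplane $\{p_i = 0\}$---and in particular $F$---at most at the single point $v$. But $v \notin F$: any point of $F$ must satisfy $p_j = 0$ for all $j \in \supp(v) = [n] \setminus \supp(F)$, whereas $v_j > 0$ for $j \in \supp(v)$. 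So $\ell$ avoids $F$ entirely, and Theorem \ref{thm:critical-projection} lets me conclude that $v$ is not a projection point.

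No step presents a real obstacle; the argument reduces to bookkeeping with one coordinate along the line. I would also point out in the write-up that the same support count in fact shows the hypothesis is vacuous: if $v, F \subseteq F'$ then $\supp(F') \supseteq \supp(v) \cup \supp(F) = [n]$, which is incompatible with $F'$ being a proper face. The proposition therefore holds in the strongest possible way, and the algorithm of Section \ref{sec:chamber-complex} may safely discard any candidate $(v, F)$ that appears to fall into this configuration without performing the numerical intersection check.
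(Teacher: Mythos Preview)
Your argument is correct. The paper's proof takes the mirror-image route: it observes that every line in $\mathcal{L}_{v,F}$ lies in the affine span of $F'$, hence in the boundary of $\Delta_{n-1}$, so none of them can meet $\M_A$ (invoking the spirit of Proposition~\ref{prop:check-projection-point}). You instead fix the single line through $v$ and $q$ and show it cannot return to $F$, invoking Theorem~\ref{thm:critical-projection}. Both arguments rest on the same fact---that a proper facet of $Q_b$ is contained in a coordinate hyperplane while $q$ is not---so the difference is only in which characterization of projection points is applied.

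Your closing observation that the hypothesis is vacuous is correct and goes beyond the paper. Since $\supp(v)\cup\supp(F)=[n]$ and any facet $F'$ of $Q_b$ satisfies $\supp(F')\subsetneq[n]$ by Theorem~\ref{thm:vertices}, no complementary pair can ever lie in a common facet. The proposition therefore holds trivially; the paper does not remark on this and in fact appeals to the proposition to discard certain chambers in Example~\ref{ex:chamber-pentagon}, so your observation sharpens the picture.
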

\begin{proof}
  The line segments from $v$ to the points in $F$ are entirely contained in $F'$ which is in the boundary of $\Delta_{n-1}$. Then $v$ cannot be a projection point
  since no such line segment can intersect the toric model $\M_A$ in the interior of $\Delta_{n-1}$.
\end{proof}
Again, we note that, Proposition \ref{prop:support-on-the-boundary} rules out the zero-dimensional chambers that are 
the vertices of the pentagonal chamber in Example \ref{ex:chamber-pentagon} since they give rise to complementary pairs $(v,F)$ lying in the same facet of 
their logarithmic Voronoi polytope. 
\begin{prop}\label{prop:face-containing-face-out}
Let $C$ be a chamber in the chamber complex $\C_A$. If no two vertices of the logarithmic Voronoi polytope $Q_b$ corresponding to points $b$ in the relative interior of $C$ have disjoint supports, then the same is true for any chamber $C'$ containing $C$.
\end{prop}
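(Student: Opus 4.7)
The plan is to reduce the proposition to a set-theoretic monotonicity via Theorem \ref{thm:vertices}. For any chamber $D$ of $\C_A$, I would introduce the set
\[
V(D) \, := \, \{\sigma \subseteq [n] \,:\, \text{the columns of } A_\sigma \text{ are linearly independent and } D \subseteq \mathrm{relint}(\conv(A_\sigma))\}.
\]
By Theorem \ref{thm:vertices}, $V(D)$ is exactly the collection of supports of the vertices of $Q_b$ for any $b$ in the relative interior of $D$. So the hypothesis is simply that no two elements of $V(C)$ are disjoint, and the goal becomes the same assertion for $V(C')$.

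The central step is to prove the monotonicity $V(C') \subseteq V(C)$ whenever $C \subseteq C'$, which is how ``$C'$ contains $C$'' reads in the polytopal complex $\C_A$. If $\sigma \in V(C')$, then $C' \subseteq \mathrm{relint}(\conv(A_\sigma))$; combined with $C \subseteq C'$, this gives $C \subseteq \mathrm{relint}(\conv(A_\sigma))$, i.e., $\sigma \in V(C)$. With this inclusion in hand, the proposition follows by contraposition: disjoint $\sigma', \tau' \in V(C')$ would also lie in $V(C)$ and contradict the hypothesis, so no two vertices of $Q_{b'}$ for $b' \in \mathrm{relint}(C')$ can have disjoint supports. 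There is essentially no obstacle here beyond correctly unpacking Theorem \ref{thm:vertices}; conceptually, the content is the intuitive ``smaller chamber admits at least as many index sets $\sigma$ as any larger one.''
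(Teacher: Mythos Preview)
Your argument has a genuine gap: the inclusion $V(C') \subseteq V(C)$ is \emph{false} in general. The problem is that the condition in Theorem~\ref{thm:vertices} should really be read as ``the \emph{relative interior} of $C$ is contained in $\mathrm{relint}(\conv(A_\sigma))$'' (equivalently, $b \in \mathrm{relint}(\conv(A_\sigma))$ for $b$ in the relative interior of $C$), not the closed chamber $C$. With the correct reading, your monotonicity step breaks down: if $C$ is a proper face of $C'$, then points of $\mathrm{relint}(C)$ lie on $\partial C'$ and can land on the \emph{boundary} of $\conv(A_{\sigma'})$ even when $\mathrm{relint}(C') \subseteq \mathrm{relint}(\conv(A_{\sigma'}))$.

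The paper's own pentagon example (Example~\ref{ex:chamber-pentagon}) already exhibits this. Take $C'$ to be the central pentagonal chamber and $C$ its horizontal edge. For $b' \in \mathrm{relint}(C')$ the polytope $Q_{b'}$ has a vertex with support $\{a,b,d\}$, so $\{a,b,d\} \in V(C')$. But the edge $C$ lies on the segment $ad$, hence on the boundary of $\conv(A_{\{a,b,d\}})$; accordingly the vertex supports of $Q_b$ for $b \in \mathrm{relint}(C)$ are $\{a,d\},\{a,c,e\},\{b,c,e\},\{b,d,e\}$, and $\{a,b,d\} \notin V(C)$. So $V(C') \not\subseteq V(C)$. (If instead you insist on the closed-$D$ definition, then $V(D)$ no longer equals the set of vertex supports, and the proof still fails to address the proposition.)

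The paper's fix is to prove the weaker---but sufficient---statement that every $\sigma' \in V(C')$ \emph{contains} some $\sigma \in V(C)$: any $b \in \mathrm{relint}(C) \subseteq C' \subseteq \conv(A_{\sigma'})$ lies in the relative interior of a face $\conv(A_\sigma)$ of that simplex with $\sigma \subseteq \sigma'$, and such $\sigma$ has linearly independent columns, so $\sigma \in V(C)$. Then for $\sigma'_1,\sigma'_2 \in V(C')$ one finds $\sigma_1 \subseteq \sigma'_1$, $\sigma_2 \subseteq \sigma'_2$ in $V(C)$; by hypothesis $\sigma_1 \cap \sigma_2 \neq \emptyset$, hence $\sigma'_1 \cap \sigma'_2 \neq \emptyset$.
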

\begin{proof}
The supports of vertices of $Q_{b'}$ where $b'$ is in the 
relative interior of $C'$ are in bijection with $\sigma'$
where columns of $A_{\sigma'}$ are affinely independent
and the relative interior of $\conv(A_{\sigma'})$ contains
the relative interior of $C'$. 
Since $C$ is a face of $C'$, for any $\sigma$ such 
that the columns of $A_\sigma$ are affinely independent
and the relative interior of $\conv(A_\sigma)$ contains
the relative interior of $C$, there is (possibly multiple) $\sigma' \supset \sigma$ as above. Hence if no two vertices
of $Q_b$ have disjoint supports, the same is true for $Q_{b'}$.
\end{proof}

\begin{cor}
Let $A \in \NN^{3 \times 5}$ such that $\conv(A)$ is  a planar pentagon. If a logarithmic Voronoi polytope $Q_b$ 
contains a projection point then $b$ is in the interior of  
an edge of the pentagonal chamber.  Moreover, each such 
edge contributes either finitely many projection points or
for every $b$ on the edge, $Q_b$ has a projection point. 
\end{cor}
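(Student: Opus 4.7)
The plan is to carry out a dimension-by-dimension case analysis on the chambers of $\C_A$. The chamber complex of a pentagon consists of a central pentagonal chamber surrounded by ten peripheral triangular chambers, together with their lower-dimensional faces. Using Propositions \ref{prop:support-reasons-dimension}, \ref{prop:chamber-boundary}, and \ref{prop:check-projection-point}, I will reduce the candidate carriers of projection points to the interiors of the five edges of the pentagonal chamber, and then exploit the algebraicity of the projection-point condition to handle the second statement.

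By Proposition \ref{prop:support-reasons-dimension}, no two-dimensional chamber supports a complementary vertex, since $n=5$ forces $k+1 = 3 > n/2$ whenever $k\ge 2$. For $b$ on the boundary of $\conv(A)$---either a pentagon vertex or in the interior of a pentagon edge---$Q_b$ is a single point supported on at most the two corresponding columns, so no complementary pair can arise. Among the ten interior one-dimensional chambers, the ``outer'' segments of the diagonals each have a pentagon vertex as one endpoint and therefore intersect both the boundary and the interior of $\conv(A)$, so Proposition \ref{prop:chamber-boundary} rules them out directly. For each of the five zero-dimensional interior chambers $b = \sigma_1 \cap \sigma_2$ (intersection of two diagonals), a short support analysis shows that $Q_b$ has exactly three vertices, with supports $\sigma_1$, $\sigma_2$, and a single three-element subset $\tau$ (the support of the unique triangle spanned by pentagon vertices that contains $b$ in its relative interior); since no face of $Q_b$ has support equal to $[5]\setminus\sigma$ for any of these three vertex supports $\sigma$, no complementary vertices exist. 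This leaves only the interiors of the five edges of the pentagonal chamber as candidate locations.

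For the second statement, fix an edge $C$ of the pentagonal chamber lying on a diagonal $\sigma_0$. For each $b$ in the relative interior of $C$, Theorem \ref{thm:vertices} identifies $Q_b$ as a quadrangle whose unique complementary pair consists of the vertex $v(b)$ with support $\sigma_0$ and the vertex $v'(b)$ with support $[5]\setminus\sigma_0$. Parametrize $b$ along $C$ by a single scalar $r$; then $v(b)$ and $v'(b)$ are affine functions of $r$. By Proposition \ref{prop:check-projection-point}, $v(b)$ is a projection point if and only if the line $\overline{\L_{v(b),v'(b)}}$ meets $X_A$. Substituting the line parametrization into the binomial equations defining $X_A$ and eliminating the auxiliary line parameter yields a single polynomial equation in $r$. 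This equation is either identically zero---in which case every $b\in C$ yields a projection point---or it has only finitely many roots in $C$, giving finitely many projection points.

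The main obstacle is the combinatorial bookkeeping at the interior zero-dimensional chambers: one must verify that exactly one pentagon-vertex triangle contains each diagonal intersection in its relative interior, and confirm the non-complementarity of the three resulting vertex supports. This reduces to elementary convex geometry of the pentagon, after which the rest follows cleanly from the stated propositions.
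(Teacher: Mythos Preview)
Your first-statement argument is essentially the paper's, with one harmless substitution: at the five interior zero-dimensional chambers you verify directly that no complementary vertex exists, whereas the paper invokes Proposition~\ref{prop:support-on-the-boundary}. Your direct support analysis is correct (and in fact cleaner, since the polytope at each diagonal intersection is a triangle with vertex supports $\sigma_1,\sigma_2,\tau$ none of whose complements occurs as a face support).

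For the second statement your approach diverges from the paper's and carries a subtle gap. You test whether the line through $v(b)$ and $v'(b)$ meets $X_A$: substituting and eliminating $s$ indeed gives a principal ideal in $\CC[r]$, hence a single polynomial. But by Proposition~\ref{prop:check-projection-point} a projection point requires the line to meet $\M_A$, i.e.\ to pass through the unique positive MLE $q(b)$. When the ML degree exceeds one, the affine span of $Q_b$ meets $X_A$ in several complex points, and your line could hit one of the non-positive ones; so ``elimination polynomial identically zero'' does not by itself force every $b\in C$ to yield a projection point. The paper avoids this by arguing globally: $\overline{\mathrm{im}\,\Psi_{v,w}}$ and $X_A$ are both two-dimensional irreducible surfaces in $\PP^4$, never equal, so their intersection is finite or a curve; since $\mathrm{im}\,\Psi_{v,w}\subset\Delta_{n-1}$, its intersection with $X_A$ already equals its intersection with $\M_A$, and in the curve case the finite fibers over $r$ force the projection to the edge parameter to be all of $\Delta_1$. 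Your elimination argument can be repaired by carrying the positivity constraint $q(b)\in\M_A$ through, but as written it proves only the ``finitely many'' half of the dichotomy.
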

\begin{proof}
Propositions \ref{prop:chamber-boundary}, \ref{prop:support-reasons-dimension}, and \ref{prop:support-on-the-boundary} 
imply the first statement. Any logarithmic 
Voronoi polytope $Q_b$ where $b$ is on an edge
of the pentagonal chamber has a pair of complementary vertices $(v,w)$. The 
Zariski closure of the image of $\Psi_{v,w} \, : \,  \Delta_1 \times \Delta_1 \longrightarrow \Delta_4$ in $\PP_\CC^4$ 
is a two-dimensional irreducible surface. Since $X_A$ is also  two-dimensional and irreducible, and it is never equal to the former Zariski closure, their intersection has either finitely many points (this is the generic case) or it is an algebraic curve. This means 
that $\mathrm{im} \Psi_{v,w} \cap \M_A$ has either
finitely many points or contains the positive real part of an algebraic curve. In the second case, the projection of the preimage of this positive real part under $\Psi_{v,w}$ to the first $\Delta_1$ in the domain of $\Psi_{v,w}$ must 
be all of $\Delta_1$. Hence, for every $b$ on this edge,
$Q_b$ has a projection point.
\end{proof}   

Our final remark about the algorithm concerns the step 
where $D_{\M_A}$ needs to be maximized over the semi-algebraic
set $\mathrm{im} \Psi_{v,F} \cap \M_A$. Of course, this is 
a challenging step. However, generically one expects 
this set to be finite. In that case, numerical algebraic 
geometry tools perform well to compute each point in this
finite intersection. Another relatively easier case is when
the maximum likelihood degree of $X_A$ is one; see Definition \ref{def:ml-degree}. There are two advantages in this case. First,
the intersection of $\mathrm{im} \Psi_{v,F}$ with $X_A$ is
guaranteed to be in $\Delta_{n-1}$ since the affine span of each logarithmic Voronoi polytope $Q_b$ intersects $X_A$ in exactly one point, 
namely the unique maximum likelihood estimator $q(b)$ in $Q_b$. 
Second, $q(b)$ is a rational function of $v(b)$ -- an equivalent
condition for an algebraic statistical model to have maximum
likelihood degree equal to one. In other words, both $v(b)$ and $q(b)$
are rational functions of the parameters $(r_1, \ldots, r_m) \in \Delta_{m-1}$. In turn, $D_{\M_A}$ restricted 
to the potential projection points $v(b)$ is a greatly 
simplified function of the same parameters. Now, one needs
to optimize $D_{\M_A}(r_1,\ldots, r_m)$ over $\Delta_{m-1}$.

\begin{example}[Independence model $2\times 3$] \label{ex:2x3-chamber}
Consider the independence model of two random variables, binary $X$ and ternary $Y$. Similar to Example \ref{ex:two-binary-RV}, this is a $3$-dimensional toric model inside $\Delta_5$ given by the matrix
$$ A \, = \, \left( \begin{array}{cccccc} 
                   1 & 1 & 1 & 1 & 1 & 1 \\
                   0 & 0 & 0 & 1 & 1 & 1 \\
                   0 & 1 & 0 & 0 & 1 & 0 \\
                   0 & 0 & 1 & 0 & 0 & 1 
                   \end{array} \right).$$
The polytope $\conv(A)$ is a 3-dimensional polytope with six vertices that is highly symmetrical due to the action of the group $S_2 \times S_3$ on the states of $X$ and $Y$. This, in turn, induces a partition of the elements in the chamber complex $\C_A$ into symmetry classes. This way, 18 full-dimensional chambers are split into 5 classes, 44 ridges are split into 7 classes, 36 edges are split into 6 classes, and 11 vertices are split into 3 classes. Figure \ref{fig:binary-and-ternary} demonstrates this division of full-dimensional chambers: any chambers that share a color are in the same symmetry class. The red middle chamber is a bipyramid with a triangular base and is the only one in its class.
\end{example}

\begin{figure}[H]
    \centering
    \includegraphics[width=0.32\textwidth]{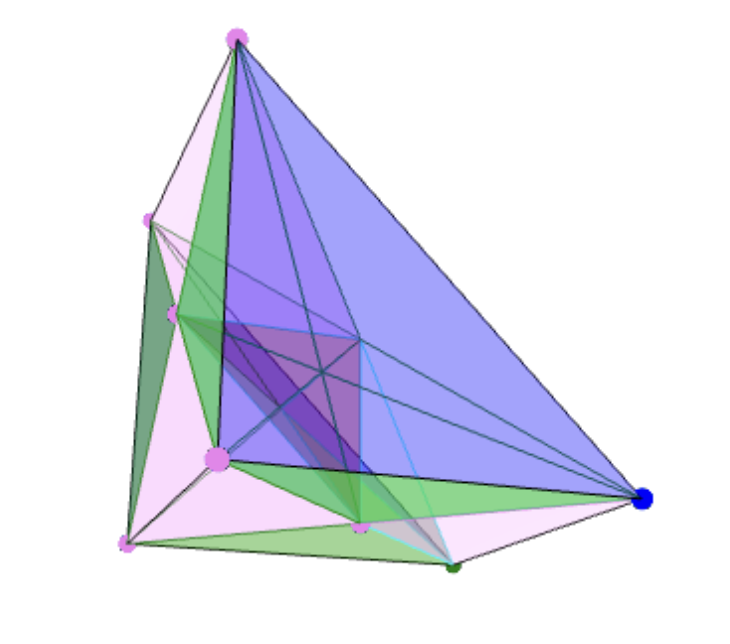}
    \includegraphics[width=0.25\textwidth]{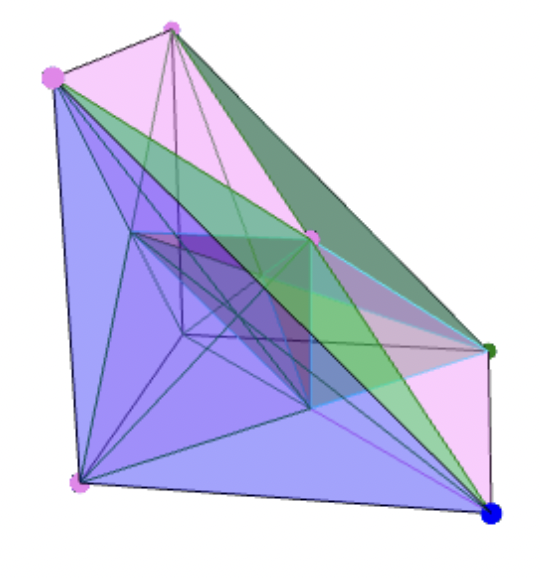}
    \includegraphics[width=0.3\textwidth]{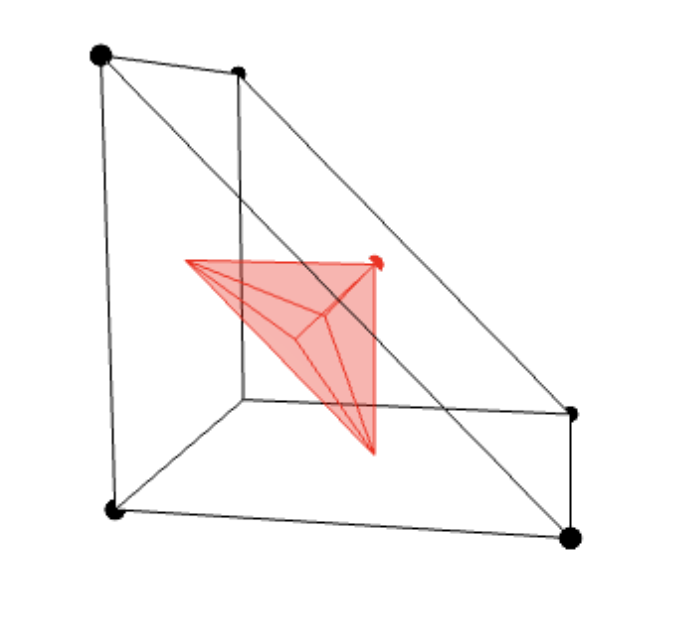}
    \caption{Chamber complex of the $2\times 3$ independence model (left and middle) and the middle chamber (right).}
    \label{fig:binary-and-ternary}
\end{figure}
To run the algorithm, note that 9 out of the 11 vertices are on the boundary of $\conv(A)$, and hence do not contribute any projection vertices by Proposition \ref{prop:support-on-the-boundary}. Call the two interior vertices $b_1$ and $b_2$. They are both in the same symmetry class and lie on the middle red full-dimensional chamber. The logarithmic Voronoi polytope at a point corresponding to $b_1$ is a triangle with no complementary vertices, and the same is true of $b_2$ by symmetry. Hence, no vertices of the chamber complex will contribute any projection points. Next, out of 36 edges 21 are on the boundary. Moreover, 6 of the remaining edges contain the vertex $b_1$ and by symmetry another 6 contain the vertex $b_2$, so we do not need to check these edges by Proposition \ref{prop:face-containing-face-out}. This leaves us with three edges $e_1$, $e_2$, and $e_3$ on the base of the red bipyramid. We will treat them in the next paragraph. Out of 44 ridges, 14 are on the boundary, 12 contain vertex $b_1$, and another 12 contain vertex $b_2$. The remaining 6 are in the same symmetry class. Logarithmic Voronoi polytopes corresponding to these are quadrilaterals with supports like $\{1234, 1345, 1246, 156\}$ that contain no complementary vertices. Hence, none of the ridges will contribute projection points. Finally, none of the three-dimensional chambers will contribute any projection points by Proposition $\ref{prop:support-reasons-dimension}$.

Hence, we only need to run step 3 of our algorithm on the edges $e_1, e_2,$ and $e_3$. By symmetry, it suffices to run it on $e_1$ only. A point $b$ on this edge can be parametrized as $r_1(1/2, 1/2, 0)+(1-r_1)(1/2, 0, 1/2)$. The only vertex-face pair we need to consider is the pair of complementary vertices $(v,\{w\})$, where $v=1/2(1, 0, 0, 0, r_1, 1-r_1)$ and $w=1/2(0, r_1, 1-r_1, 1, 0, 0)$. The parametrization $\Psi_{v,\{w\}}$ of the line between them gives rise to the single equation $(s - 1)^2 - s^2=0$. Therefore $s=1/2$, while $r_1$ is a free variable between 0 and 1. Upon substituting $s=1/2$ into $D(v,\text{im}(\Psi_{v,\{w\}}))$, we get the constant value $\log 2$. Therefore, the divergence at \textit{every} point $b$ of the edge $e_1$ is $\log 2$, attained at the two vertices of the logarithmic Voronoi polytope $v$ and $w$. By symmetry, the same is true of $e_2$ and $e_3$. We conclude that the maximum divergence from this model is $\log 2$ and there are infinitely many maximizers which we completely characterized above. These maximizers were also studied and visualized in \cite{AK06}.

\section{Logarithmic Voronoi polytopes of reducible hierarchical log-linear models}\label{sec:decomposable}

This section is devoted to logarithmic Voronoi polytopes 
of toric models that are known as reducible hierarchical 
log-linear models \cite{DrtonSturmfelsSullivant2009LecturesOnAlgebraicStatistics, HS02, Lau96}. Besides giving one 
structural result about these polytopes, we will also prove results relating the maximum information divergence to such 
models with those that are obtained by certain marginalizations. For similar work we refer the reader to 
\cite{Mat09}.

A \textit{simplicial complex} is a set $\Gamma\subseteq 2^{[m]}$ such that if $F\in\Gamma$ and $S\subseteq F$, then $S\in\Gamma$. The elements of $\Gamma$ are called \textit{faces}. We refer to inclusion-maximal faces of $\Gamma$ as \textit{facets}. It is sufficient to list the facets to describe a simplicial complex.  For example, $\Gamma=[12][13][23]$ will denote the simplicial complex $\Gamma=\{\varnothing, \{1\}, \{2\}, \{3\},\{1, 2\}, \{1, 3\},\{2,3\}\}$.

Let $X_1,\ldots, X_m$ be discrete random variables. For each $i\in[m]$, assume that $X_i$ has the state space $[d_i]$ for some $d_i\in\NN$. Let $\R=\prod_{i=1}^m[d_i]$ be the state space of the random vector $(X_1,\ldots, X_m)$. For each $i=(i_1, \ldots, i_m)\in \R$ and $F=\{f_1,f_2,\ldots\}\subseteq[m]$, we will denote $i_F=(i_{f_1},i_{f_2},\ldots)$. Moreover, each such subset $F\subseteq[m]$ gives rise to the random vector $X_F=(X_f)_{f\in F}$ with the state space $\R_F=\prod_{f\in F}[d_f]$.

\begin{defn}
Let $\Gamma\subseteq 2^{[m]}$ be a simplicial complex and let $d_1,\ldots, d_m\in\NN$. For each facet $F\in\Gamma$, introduce $|\R_F|$ parameters $\theta^{(F)}_{i_F}$, one for each $i_F\in\R_F$. The \textit{hierarchical log-linear model} associated with $\Gamma$ and $\ddb = (d_1, \ldots, d_m)$ is defined to be 
$$\M_{\Gamma, \ddb}=\left\{p\in\Delta_{|\R|-1}:p_i=\frac{1}{Z(\theta)}\prod_{F\in\text{facets}(\Gamma)}\theta_{i_F}^{(F)}\text{ for all }i\in\R\right\}$$
where $Z(\theta)$ is the normalizing constant defined as
$$Z(\theta):=\sum_{i\in\R}\prod_{F\in\text{facets}(\Gamma)}\theta_{i_F}^{{F}}.$$
\end{defn}

If $u\in\NN^{|\R|}$ is a $d_1\times\cdots\times d_m$ contingency table containing data for the random vector $(X_1,\ldots, X_m)$  and $F=\{f_1,f_2,\ldots\}\subseteq[m]$, let $u_F$ denote the $d_{f_1}\times d_{f_2}\times\cdots$ table with $(u_F)_{i_F}=\sum_{j\in\R_{[m]\setminus F}}u_{i_F,j}$. Such table $u_F$ is called the \textit{$F$-marginal} of $u$. For simplicity, we will denote the simplex in which $\M_{\Gamma, \ddb}$ lives by $\Delta_{\Gamma, \ddb}$.

\begin{prop}\cite[Prop. 1.2.9]{DrtonSturmfelsSullivant2009LecturesOnAlgebraicStatistics} \label{prop:hierarchical=toric}
Hierarchical log-linear models are toric models. For any simplicial complex $\Gamma \subset 2^{[m]}$ and positive
integers $\ddb = (d_1, \ldots, d_m)$,  the model $\M_{\Gamma, \ddb}$ is realized by the 0/1 matrix $A_{\Gamma, \ddb}$ representing the marginalization map $$ \varphi(u) = (u_{F_1},u_{F_2},\ldots)$$ where $F_1,F_2,\ldots$ are the facets of $\Gamma$. In other words, $\M_{\Gamma, \ddb} =\M_{A_{\Gamma, \ddb}}$.
\end{prop}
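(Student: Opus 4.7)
The plan is to unpack both parametrizations and observe they coincide once the matrix $A_{\Gamma,\ddb}$ is written down explicitly. First I would index the rows of $A_{\Gamma,\ddb}$ by pairs $(F,j_F)$, where $F$ ranges over the facets of $\Gamma$ and $j_F \in \R_F$, and the columns by $i \in \R$. Setting
\[
\bigl(A_{\Gamma,\ddb}\bigr)_{(F,j_F),\,i} \;=\; \begin{cases} 1 & \text{if } i_F = j_F, \\ 0 & \text{otherwise,} \end{cases}
\]
a direct computation shows $\bigl(A_{\Gamma,\ddb}\,u\bigr)_{(F,j_F)} = \sum_{i \in \R :\, i_F = j_F} u_i = (u_F)_{j_F}$, so $A_{\Gamma,\ddb}$ represents the marginalization map $\varphi$.

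Next I would verify that $(1,1,\ldots,1)$ lies in the row span, so the setup of Section~\ref{introduction} applies. This follows because, for any fixed facet $F$, the rows indexed by $(F,j_F)$ as $j_F$ ranges over $\R_F$ partition the columns: each $i \in \R$ satisfies $i_F = j_F$ for exactly one $j_F$. Hence summing these rows yields the all-ones row, and one may rewrite $A_{\Gamma,\ddb}$ with an all-ones first row after a harmless change of basis.

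Then I would match the two parametrizations. Writing $z_{(F,j_F)}$ for the torus coordinate associated to row $(F,j_F)$, the toric parametrization associated to $A_{\Gamma,\ddb}$ reads
\[
p_i \;=\; \frac{1}{Z} \prod_{(F,j_F)} z_{(F,j_F)}^{(A_{\Gamma,\ddb})_{(F,j_F),i}} \;=\; \frac{1}{Z} \prod_{F \in \mathrm{facets}(\Gamma)} z_{(F, i_F)},
\]
where the second equality uses that the exponent is $1$ exactly when $j_F = i_F$. Identifying $\theta^{(F)}_{i_F}$ with $z_{(F,i_F)}$, this is precisely the defining parametrization of $\M_{\Gamma,\ddb}$, with the same normalizing constant $Z(\theta)$. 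Therefore $\M_{\Gamma,\ddb} = \M_{A_{\Gamma,\ddb}}$, and since the latter is toric by construction, so is the former.

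There is no real obstacle here — the content is entirely bookkeeping on the indexing of rows by (facet, local index) pairs and recognizing that the $0/1$ exponent pattern in the monomial $\prod z^{a_i}$ selects exactly one factor per facet, namely the one matching $i_F$. The only thing to be careful about is that one writes down a parameter for \emph{every} cell $j_F \in \R_F$ of each facet (not just for $i_F$'s arising from $i \in \R$), so that the matrix $A_{\Gamma,\ddb}$ has the correct dimensions $\bigl(\sum_{F \in \mathrm{facets}(\Gamma)} |\R_F|\bigr) \times |\R|$.
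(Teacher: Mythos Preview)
The paper does not actually supply a proof of this proposition; it is stated with a citation to \cite[Prop.~1.2.9]{DrtonSturmfelsSullivant2009LecturesOnAlgebraicStatistics} and then used as a black box. Your argument is correct and is essentially the standard one from that reference: write down the $0/1$ matrix whose rows are indexed by the cells $(F,j_F)$ of each facet marginal, check that the all-ones vector is in its row span, and observe that the monomial parametrization $p_i \propto \prod_F z_{(F,i_F)}$ is literally the defining parametrization of $\M_{\Gamma,\ddb}$ once one renames $z_{(F,i_F)} = \theta^{(F)}_{i_F}$.
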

Here we wish to point out that for any point $q \in \Delta_{\Gamma, \ddb}$ (in particular, for $q \in \M_{\Gamma, \ddb}$) the logarithmic Voronoi polytope 
$Q_q^\Gamma$ consists of all $p \in \Delta_{\Gamma, \ddb}$
such that $\varphi(p) = \varphi(q)$. 
\begin{definition} \label{def:reducible}
A simplicial complex $\Gamma$ on $[m]$ is called \textit{reducible} with decomposition $(\Gamma_1,S,\Gamma_2)$ if there exist sub-complexes $\Gamma_1$, $\Gamma_2$ of $\Gamma$ and a subset $S\subseteq[m]$ such that $\Gamma=\Gamma_1\cup\Gamma_2$ and $\Gamma_1\cap\Gamma_2=2^S$. We say $\Gamma$ is \textit{decomposable} if it is reducible and each of the $\Gamma_1,\Gamma_2$ is either decomposable or a simplex. A hierarchical log-linear model associated to a reducible (decomposable) simplicial complex is called reducible (decomposable).
\end{definition}

\subsection{Decomposition theory of logarithmic Voronoi polytopes}

Let $\Gamma$ be a reducible simplicial complex on $[m]$ with decomposition $(\Gamma_1,S,\Gamma_2)$ and $\ddb = (d_1, \ldots, d_m) \in \NN^m$.  Suppose $\Gamma_1$ has the vertex set $\alpha=\{\alpha_1,\ldots,\alpha_k\}$ and $\Gamma_2$ has the vertex set $\beta=\{\beta_1,\ldots, \beta_s\}$. Then $S=\{\alpha_1,\ldots,\alpha_k\}\cap\{\beta_1,\ldots,\beta_s\}$. 
We also let $\ddb_\alpha = (d_{\alpha_1}, \ldots, d_{\alpha_k})$, with analogous definitions for $\ddb_\beta$ 
and $\ddb_S$.
Let $p$ be a point in $\Delta_{\Gamma, \ddb}$ and consider the maps
$$\pi_1:\Delta_{\Gamma, \ddb} \to \Delta_{\Gamma_1, \ddb_\alpha} 
\,\,\,\, p\mapsto p_1=p_{\{\alpha_1,\dots,\alpha_k\}}.$$
$$\pi_2:\Delta_{\Gamma, \ddb} \to \Delta_{\Gamma_2, 
\ddb_\beta} \,\,\,\, p\mapsto p_2=p_{\{\beta_1,\dots,\beta_s\}}.$$
More precisely, 
$$(\pi_1(p))_{i_\alpha} = \sum_{j \in \R: j_\alpha = i_\alpha} p_j \quad \mbox{  and } \quad
(\pi_2(p))_{i_\beta} = \sum_{j \in \R: j_\beta = i_\beta} p_j.$$

\begin{lemma} \label{lem:diagram-commutes}
Let $\Gamma$ be a reducible simplicial complex on $[m]$ with decomposition $(\Gamma_1,S,\Gamma_2)$ and $\ddb = (d_1, \ldots, d_m) \in \NN^m$. Let $q \in \M_{\Gamma, \ddb}$ so that $q_1 = \pi_1(q)$ and $q_2 = \pi_2(q)$. 
Furthermore, consider the maps
$$\pi_1':\Delta_{\Gamma_1, \ddb_\alpha} \to \Delta_{2^S, \ddb_S} 
\,\,\,\, p\mapsto p_{S}$$
$$\pi_2':\Delta_{\Gamma_1, \ddb_\beta} \to \Delta_{2^S, 
\ddb_S} \,\,\,\, p\mapsto p_{S}$$
defined by  
$$(\pi_1'(p))_{i_S} = \sum_{j \in \R_\alpha: j_S = i_S} p_j \quad \mbox{  and } \quad
(\pi_2'(p))_{i_S} = \sum_{j \in \R_\beta: j_S = i_S} p_j.$$ 
Then $q_1 \in \M_{\Gamma_1, \ddb_\alpha}$ and 
$q_2 \in \M_{\Gamma_2, \ddb_\beta}$, and 
the following diagram commutes:

\begin{center}
\begin{tikzcd}
                                                             & q\in\M_{\Gamma, \ddb} \arrow[ld, "\pi_1"', shift right] \arrow[rd, "\pi_2"] &                                                              \\
{q_1} \in\M_{\Gamma_1, \ddb_\alpha}\arrow[rd,"\pi_1'"'] &                                                                             & {q_2} \arrow[ld, "\pi_2'"]\in\M_{\Gamma_2, \ddb_\beta} \\
                                                             & q_3\in\M_{2^S, \ddb_S}                                                                         &                                                              \\
                                                             &                                      
\end{tikzcd}
\end{center}
\end{lemma}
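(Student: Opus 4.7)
The plan is to verify three assertions: (a) $q_1 \in \M_{\Gamma_1, \ddb_\alpha}$, (b) $q_2 \in \M_{\Gamma_2, \ddb_\beta}$, and (c) the diagram commutes, i.e., $\pi_1'(q_1) = \pi_2'(q_2)$. Assertion (c) is essentially tautological: both compositions sum $q$ over the coordinates in $[m] \setminus S$ (just in different orders), hence each equals the $S$-marginal of $q$. The common value $q_3$ lies in $\M_{2^S, \ddb_S}$ automatically, because $2^S$ has the single facet $S$ and so $\M_{2^S, \ddb_S}$ is the saturated model on $\R_S$.

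For (a), I would start from the defining product form
\[
q_i \;=\; \frac{1}{Z(\theta)} \prod_{H \in \mathrm{facets}(\Gamma)} \theta^{(H)}_{i_H}.
\]
The first key observation is that $\mathrm{facets}(\Gamma) = \mathrm{facets}(\Gamma_1) \sqcup \mathrm{facets}(\Gamma_2)$, provided neither $\Gamma_j$ equals $2^S$: a facet of $\Gamma_1$ not contained in $S$ cannot be enlarged within $\Gamma_1$ and also cannot be enlarged within $\Gamma_2$ (such an enlargement would sit in $\alpha \cap \beta = S$), so it is a facet of $\Gamma$; conversely any facet of $\Gamma$ belongs to $\Gamma_1$ or $\Gamma_2$. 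Each $F \in \mathrm{facets}(\Gamma_1)$ satisfies $F \subseteq \alpha$ and each $G \in \mathrm{facets}(\Gamma_2)$ satisfies $G \subseteq \beta$. The $\Gamma_1$-factors are constant on the fiber $\{j : j_\alpha = i_\alpha\}$ and therefore come out of the marginalization sum:
\[
(\pi_1(q))_{i_\alpha} \;=\; \frac{1}{Z(\theta)} \prod_{F \in \mathrm{facets}(\Gamma_1)} \theta^{(F)}_{i_F} \cdot h(i_S),
\]
where
\[
h(i_S) \;:=\; \sum_{k \in \R_{\beta \setminus S}} \prod_{G \in \mathrm{facets}(\Gamma_2)} \theta^{(G)}_{(i_S,\,k)_G}
\]
depends only on $i_S$ because every $G \subseteq \beta$ and fixing $i_S$ specifies the $S$-part of every $G$-coordinate.

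The crucial step is to absorb $h(i_S)$ into a single $\Gamma_1$-facet parameter. Because $2^S \subseteq \Gamma_1$, the face $S$ lies in some facet $F^{\star}$ of $\Gamma_1$ with $S \subseteq F^{\star}$. Setting $\tilde\theta^{(F^{\star})}_{i_{F^{\star}}} := h(i_S)\, \theta^{(F^{\star})}_{i_{F^{\star}}}$ (legitimate since $i_S$ is determined by $i_{F^{\star}}$) and keeping the remaining parameters unchanged exhibits $(\pi_1(q))_{i_\alpha}$ as a product of the required form for $\M_{\Gamma_1, \ddb_\alpha}$; the normalizing constant is forced to be correct because $\pi_1(q)$, as the marginal of a probability distribution, is automatically a probability distribution on $\R_\alpha$. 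Assertion (b) then follows by the symmetric argument, interchanging the roles of $\alpha$ and $\beta$.

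The one subtle point is the parameter absorption: it relies on the fact that some facet of $\Gamma_1$ contains $S$, which is guaranteed by $2^S \subseteq \Gamma_1$. In the degenerate case $\Gamma_1 = 2^S$ the facet $S$ itself plays this role and the argument simplifies further. Everything else is direct bookkeeping with the marginalization maps.
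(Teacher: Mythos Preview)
Your argument is correct, but it follows a different route than the paper's proof. You work directly with the parametric definition of $\M_{\Gamma,\ddb}$: split the facet product into $\Gamma_1$- and $\Gamma_2$-factors, marginalize over $\R_{\beta\setminus S}$, and absorb the resulting $S$-dependent sum $h(i_S)$ into a $\Gamma_1$-facet parameter containing $S$. The paper instead argues via maximum likelihood: it lets $t\in\M_{\Gamma_1,\ddb_\alpha}$ and $r\in\M_{\Gamma_2,\ddb_\beta}$ be the MLEs of $q_1$ and $q_2$, invokes the factorization $q_{i}=t_{i_\alpha}r_{i_\beta}/(r_S)_{i_S}$ for the MLE in a reducible model (Lauritzen, Prop.~4.1.4), and then marginalizes this identity to obtain $q_1=t$ directly. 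Your approach is more elementary and self-contained, requiring only the definition of the hierarchical model and some care with the facet bookkeeping (the edge case where $S$ itself is a facet of $\Gamma_1$, not only when $\Gamma_1=2^S$, also fits your absorption step since $h(i_S)$ can serve as the $\theta^{(S)}$ parameter). The paper's approach is slicker and sidesteps that bookkeeping, at the cost of importing an external structural result that it will in any case use again in Theorem~\ref{thm:decomposition-theorem}.
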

\begin{proof}
By the definitions of the maps, $ \pi_1' \circ \pi_1 = \pi_2' \circ \pi_2$. 
Also, since $\M_{2^S, \ddb_S} = \Delta_{2^S, \ddb_S}$ it is clear that
$q_3 \in \M_{2^S,\ddb_S}$. We just need to show $q_1 \in \M_{\Gamma_1, \ddb_\alpha}$ and  $q_2 \in \M_{\Gamma_2, \ddb_\beta}$. We prove the first claim since the second one requires the same argument. 
Let $t \in\M_{\Gamma_1, \ddb_\alpha}$ be the MLE of $q_1$ and $r\in\M_{\Gamma_2, \ddb_\beta}$ be the MLE of $q_2$. We will show that $q_1=t$. Note that $q\in\M_{\Gamma, \ddb}$, so it is its own MLE in the model. Since $t$ is in the same logarithmic Voronoi polytope as $q_1$ and $r$ is in the same logarithmic
Voronoi polytope as $q_2$, we see that $\pi_1'(t)= t_S = q_3$ and $\pi_2'(r) = r_S = q_3$. 
Then by \cite[Prop 4.1.4]{Lau96}  
$$q_{i_1,\ldots, i_m}=\frac{(t_{i_\alpha})\cdot (r_{i_\beta})}{(r_S)_{i_S}},$$
where $i_{\alpha}=(i_{\alpha_1},\ldots, i_{\alpha_k})$ and $i_{\beta}=(i_{\beta_1},\ldots, i_{\beta_s})$.
Then observe that for any $i_\alpha$, we get
\begin{align*}
    (q_1)_{i_{\alpha}} &=\sum_{j\in \R: j_\alpha = i_\alpha}
    \frac{(t_{j_\alpha}) \cdot (r_{j_\beta})}{(r_S)_{j_S}}
    =\frac{t_{i_\alpha}}{(r_S)_{i_S}} \sum_{j_\beta:j_S=i_S} r_{j_\beta} = \frac{t_{i_\alpha} \cdot (r_S)_{i_S}}{(r_S)_{i_S}}
    =t_{i_{\alpha}}.
\end{align*}
Since $i_\alpha$ was arbitrary, we get that $q_1=t$. 
\end{proof}


\comm{
\begin{example}
Consider the complex $\Gamma=[12][13][23][24][34]$ for $m=4$. Suppose both $X_1, X_2$, $X_3$, and $X_4$ are discrete random variables. Let $\Gamma_1=[12][13][23]$ and $\Gamma_2=[23][24][34]$, so $S=\{2,3\}$. Define $\psi:\RR^{\Gamma}\to\RR^{\Gamma_1}\times\RR^{\Gamma_2}$ as follows:
$$\psi: (u_{ijk\ell})\mapsto ((u_{ijk+}), (u_{+jk\ell})).$$
\begin{thm} For any $p=(p_{ijk\ell})\in\M_{\Gamma}$ where $\Gamma$ is as in the example above,
\begin{align*}
\log\Vor_\Gamma(p)=&\Bigg[\left\{(u_{ijk\ell})\in\Delta_{\Gamma}: u_{ijk\ell}=\frac{v_{ijk}\cdot w_{jk\ell}}{p_{+jk+}}\text{ for }v\in\log\Vor_{\Gamma_1}(p_{ijk+})\text{ and }w\in\log\Vor_{\Gamma_2}(p_{ijk+})\right\}\\
&+\ker(\psi)\Bigg]\cap \Delta_{\Gamma}
\end{align*}
\end{thm}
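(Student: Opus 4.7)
The plan is to reformulate membership in $\log\mathrm{Vor}_\Gamma(p)$ via Birch's theorem as a condition on the facet marginals of $u$, and then to use the decomposition $(\Gamma_1, S, \Gamma_2)$ with $S=\{2,3\}$ to split that condition into \emph{independent} conditions on the two sub-tables $u_{ijk+}$ and $u_{+jk\ell}$. The key observation is that $\Gamma=\Gamma_1\cup\Gamma_2$ at the level of facets, so every $\Gamma$-facet marginal of $u$ is already determined by $\psi(u)=(u_{ijk+},u_{+jk\ell})$; Birch's theorem then yields the equivalence
\begin{equation*}
u\in\log\mathrm{Vor}_\Gamma(p)\iff u_{ijk+}\in\log\mathrm{Vor}_{\Gamma_1}(p_{ijk+})\ \text{and}\ u_{+jk\ell}\in\log\mathrm{Vor}_{\Gamma_2}(p_{+jk\ell}).
\end{equation*}

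For the $(\supseteq)$ direction, I would start with $v\in\log\mathrm{Vor}_{\Gamma_1}(p_{ijk+})$ and $w\in\log\mathrm{Vor}_{\Gamma_2}(p_{+jk\ell})$, form $u_0:=v_{ijk}w_{jk\ell}/p_{+jk+}$, and verify by direct summation that $\psi(u_0)=(v,w)$. The identities $v_{+jk}=p_{+jk+}=w_{jk+}$ hold because the $S$-marginal $\{2,3\}$ is a facet of both $\Gamma_1$ and $\Gamma_2$ and is therefore preserved inside either sub-Voronoi cell. They collapse the two sums to $(u_0)_{ijk+}=v_{ijk}\cdot w_{jk+}/p_{+jk+}=v_{ijk}$ and $(u_0)_{+jk\ell}=v_{+jk}\cdot w_{jk\ell}/p_{+jk+}=w_{jk\ell}$. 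Hence for any $h\in\ker(\psi)$ we have $\psi(u_0+h)=(v,w)$, and the displayed equivalence places $u_0+h$ in $\log\mathrm{Vor}_\Gamma(p)$ whenever $u_0+h\in\Delta_\Gamma$ (which is exactly the role of the outer intersection with $\Delta_\Gamma$).

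For the $(\subseteq)$ direction, given $u\in\log\mathrm{Vor}_\Gamma(p)$, I would set $v:=u_{ijk+}$, $w:=u_{+jk\ell}$, and $u_0:=v_{ijk}w_{jk\ell}/p_{+jk+}$. The equivalence puts $v,w$ into the correct sub-cells, and the same summation as before gives $\psi(u_0)=(v,w)=\psi(u)$, so $u-u_0\in\ker(\psi)$ and $u=u_0+(u-u_0)$ has the claimed Minkowski-sum form; membership $u_0\in\Delta_\Gamma$ follows from nonnegativity and $\sum_{ijk\ell}u_0=\sum_{jk}v_{+jk}w_{jk+}/p_{+jk+}=\sum_{jk}p_{+jk+}=1$. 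The main obstacle I expect is the index bookkeeping in these summations; a minor delicate point is the convention $0/0=0$ needed whenever $p_{+jk+}$ vanishes on some slice, which is harmless since $v_{+jk}=p_{+jk+}=0$ forces $v_{ijk}=0$ for all $i$ (and symmetrically for $w$), so the numerator already vanishes on the problematic slices consistently with the convention used throughout the paper.
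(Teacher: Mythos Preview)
Your proof is correct and follows the same double-containment strategy as the paper: for $(\supseteq)$ you check that the product point $u_0$ has $\psi(u_0)=(v,w)$ and hence the correct facet marginals, and for $(\subseteq)$ you set $v=u_{ijk+}$, $w=u_{+jk\ell}$, form $u_0$, and show $u-u_0\in\ker(\psi)$. Your $(\subseteq)$ argument is in fact a bit more streamlined than the paper's, which verifies the vanishing of the $[123]$- and $[234]$-marginals of $u-u_0$ by explicit sum rearrangements rather than by directly reusing the identity $\psi(u_0)=(v,w)=\psi(u)$ already established in the first half.
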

\begin{proof}
We prove this by double containment. First, we show that the right-hand side is contained in the log-Voronoi cell at $p$. Let $u=u^{(1)}+u^{(2)}\in\Delta_{\Gamma}$, where $u^{(1)}_{ijk\ell}=\frac{v_{ijk}\cdot w_{jk\ell}}{p_{+jk+}}$ with $v\in\log\Vor_{\Gamma_1}(p_{ijk+})$, $w\in\log\Vor_{\Gamma_2}(p_{+jk\ell})$ for all $i,j,k,\ell$, and $u^{(2)}\in\ker(\psi)$. We need to show that $u|_{F}=p|_{F}$ for any facet $F\in\Gamma$. For simplicity, we let $F=\{1,2\}$. Then note that for all $i,j$, we have:
$$u^{(1)}_{ij++}=\sum_{k,\ell}\frac{v_{ijk}\cdot w_{jk\ell}}{p_{+jk+}}=\frac{v_{ij+}\cdot w_{j++}}{p_{+j++}}=v_{ij+}=p_{ij++}.$$
The argument is similar for all other facets, so we conclude that $u^{(1)}\in\log\Vor_{\Gamma}(p)$. Furthermore, note that $u_{ij++}=u^{(1)}_{ij++}+u^{(2)}_{ij++}=u^{(1)}_{ij++}$, since $u^{(2)}\in \ker(\psi)$ and so has zero marginals on every facet. Therefore, $u\in\log\Vor_{\Gamma}(p)$, as desired.

For the other direction, let $u\in\log\Vor_{\Gamma}(p)$. We will show that $u_{ijk\ell}=\frac{u_{ijk+}u_{+jk\ell}}{p_{+jk+}}+y_{ijk\ell}$ where $y\in\ker(\psi)$. (One can easily check that $u_{ijk+}\in\log\Vor_{\Gamma_1}(p_{ijk+})$ and $u_{+jk\ell}\in\log\Vor_{\Gamma_2}(p_{+jk\ell})$). Note that since $p$ is its own MLE in $\M_{\Gamma}$, we have that for any $i,j,k,\ell$,
$$p_{ijk\ell}=\frac{p_{ijk+}\cdot p_{+jk\ell}}{p_{+jk+}}.$$
Therefore, we also get
\begin{align*}
u_{ij++}=p_{ij++}&=\sum_{k,\ell}p_{ijk\ell}\\
&=\sum_{k,\ell}\frac{p_{ijk+}\cdot p_{+jk\ell}}{p_{+jk+}}\\
&=\sum_{k}\frac{p_{ijk+}\cdot p_{+jk+}}{p_{+jk+}}=p_{+jk+}\\
&=\sum_{k}\frac{u_{ijk+}\cdot u_{+jk+}}{p_{+jk+}}\\
&=\sum_{k,\ell}\frac{u_{ijk+}\cdot u_{+jk\ell}}{p_{+jk+}}.
\end{align*}
Similar equalities hold for all other facets of $\Gamma$. Hence, for any $i',j'$, we get that
$$u_{i'j'++}=\sum_{k,\ell}\frac{u_{i'j'k+}\cdot u_{+j'k\ell}}{p_{+j'k+}}=\sum_{k,\ell}u_{i'j'k\ell}.$$
This means that for any $k',\ell'$, the following are true:
$$\frac{u_{i'j'k'+}\cdot u_{+j'k'\ell'}}{p_{+j'k'+}}=u_{i'j'++}-\sum_{\substack{k\in[d_{3}]\setminus\{k'\}\\\ell\in[d_{4}]\setminus\{\ell'\}}}\frac{u_{i'j'k+}\cdot u_{+j'k\ell}}{p_{+j'k+}}$$
and
\begin{align*}
u_{i'j'k'\ell'}=u_{i'j'++}-\sum_{\substack{k\in[d_{3}]\setminus\{k'\}\\\ell\in[d_{4}]\setminus\{\ell'\}}}u_{i'j'k\ell}.
\end{align*}
Subtracting the above two equations and re-arranging, we get that
\begin{align}\label{sum-over-kl}
u_{i'j'k'\ell'}=\sum_{\substack{k\in[d_{3}]\setminus\{k'\}\\\ell\in[d_{4}]\setminus\{\ell'\}}}\Bigg(\frac{u_{i'j'k+}\cdot u_{+j'k\ell}}{p_{+j'k+}}-u_{i'j'k\ell}\Bigg)+\frac{u_{i'j'k'+}\cdot u_{+j'k'\ell'}}{p_{+j'k'+}}.
\end{align}

If we can show that the sum above is in the kernel of $\psi$, we're done. To do this, we need to show that both $[123]$- and $[234]$-marginals are zero. Observe that
\begin{align*}
\sum_{i'}\sum_{\substack{k\in[d_{3}]\setminus\{k'\}\\\ell\in[d_{4}]\setminus\{\ell'\}}}\Bigg(\frac{u_{i'j'k+}\cdot u_{+j'k\ell}}{p_{+j'k+}}-u_{i'j'k\ell}\Bigg)&=\sum_{\substack{k\in[d_{3}]\setminus\{k'\}\\\ell\in[d_{4}]\setminus\{\ell'\}}}\sum_{i'}\Bigg(\frac{u_{i'j'k+}\cdot u_{+j'k\ell}}{p_{+j'k+}}-u_{i'j'k\ell}\Bigg)\\
&=\sum_{\substack{k\in[d_{3}]\setminus\{k'\}\\\ell\in[d_{4}]\setminus\{\ell'\}}}\Bigg(\frac{u_{+j'k+}\cdot u_{+j'k\ell}}{p_{+j'k+}}-u_{+j'k\ell}\Bigg)\\
&=\sum_{\substack{k\in[d_{3}]\setminus\{k'\}\\\ell\in[d_{4}]\setminus\{\ell'\}}}(u_{+j'k\ell}-u_{+j'k\ell})=0,
\end{align*}
so the $[234]$-marginal is indeed zero. To show that the $[234]$-marginal is zero, note that by re-writing $u_{++k'\ell'}$, we have the equality
\begin{align}\label{sum-over-ij}
u_{i'j'k'\ell'}=\sum_{\substack{i\in[d_{1}]\setminus\{i'\}\\j\in[d_{2}]\setminus\{j'\}}}\Bigg(\frac{u_{ijk'+}\cdot u_{+jk'\ell'}}{p_{+jk'+}}-u_{ijk'\ell'}\Bigg)+\frac{u_{i'j'k'+}\cdot u_{+j'k'\ell'}}{p_{+j'k'+}}.
\end{align}
Hence, the two sums in (\ref{sum-over-kl}) and (\ref{sum-over-ij}) are equal. We also observe that
\begin{align*}
\sum_{\ell'}\sum_{\substack{i\in[d_{1}]\setminus\{i'\}\\j\in[d_{2}]\setminus\{j'\}}}\Bigg(\frac{u_{ijk'+}\cdot u_{+jk'\ell'}}{p_{+jk'+}}-u_{ijk'\ell'}\Bigg)&=\sum_{\substack{i\in[d_{1}]\setminus\{i'\}\\j\in[d_{2}]\setminus\{j'\}}}\sum_{\ell'}\Bigg(\frac{u_{ijk'+}\cdot u_{+jk'
\ell'}}{p_{+jk'+}}-u_{ijk'\ell'}\Bigg)\\
&=\sum_{\substack{i\in[d_{1}]\setminus\{i'\}\\j\in[d_{2}]\setminus\{j'\}}}\Bigg(\frac{u_{ijk'+}\cdot u_{+jk'+}}{p_{+jk'+}}-u_{ijk'+}\Bigg)\\
&=\sum_{\substack{i\in[d_{1}]\setminus\{i'\}\\j\in[d_{2}]\setminus\{j'\}}}(u_{ijk'+}-u_{ijk'+})=0,
\end{align*}
so the $[123]$-marginal is also zero.
Hence, we have that $u_{i'j'k'\ell'}=\frac{u_{i'j'k'+}\cdot u_{+j'k'\ell'}}{p_{+j'k'+}}+y_{i'j'k'\ell'}$ where $$y:=\sum_{\substack{k\in[d_{3}]\setminus\{k'\}\\\ell\in[d_{4}]\setminus\{\ell'\}}}\Bigg(\frac{u_{i'j'k+}\cdot u_{+j'k\ell}}{p_{+j'k+}}-u_{i'j'k\ell}\Bigg)\in\ker(\psi),$$ as desired.
\end{proof}
\end{example}
}

Now we are ready to prove the main result of this section. From the discussion so far we see that $\pi_1$ and $\pi_2$ restrict to logarithmic Voronoi polytopes, i.e., 
$\pi_1 \, : \, Q_p^\Gamma \longrightarrow Q_{p_1}^{\Gamma_1}$
and $\pi_2 \, : \, Q_p^\Gamma \longrightarrow Q_{p_2}^{\Gamma_2}$ where $p_1 = \pi_1(p)$ and $p_2 = \pi_2(p) $ for $p \in \Delta_{\Gamma, \ddb}$. In fact, we can
take $q = p \in \M_{\Gamma, \ddb}$ so that $q_1$ and $q_2$ 
are in $\M_{\Gamma_1, \ddb_\alpha}$ and $\M_{\Gamma_2, \ddb_\beta}$, respectively, by the above lemma. The next theorem reconstructs $Q_p^{\Gamma}$ from the 
logarithmic Voronoi polytopes $Q_{p_1}^{\Gamma_1}$ 
and $Q_{p_2}^{\Gamma_2}$.

\begin{thm}\label{thm:decomposition-theorem} 
Let $\Gamma$ be a reducible simplicial complex on $[m]$ with decomposition $(\Gamma_1,S,\Gamma_2)$ and $\ddb = (d_1, \ldots, d_m) \in \NN^m$.
 Let $\psi:\Delta_{\Gamma, \ddb}\to\Delta_{\Gamma_1, \ddb_\alpha}\times\Delta_{\Gamma_2, \ddb_\beta}$ be the map
$\psi(u) = (\pi_1(u), \pi_2(u))$.
Then for any $q\in\M_{\Gamma, \ddb}$, we have
\begin{align*}
Q_q^{\Gamma}=&\Bigg[\left\{u\in\Delta_{\Gamma, \ddb}: u_{i_1,\cdots, i_m}=\frac{v_{i_{\alpha}}\cdot w_{i_{\beta}}}{(q_S)_{i_S}}\text{ for }v\in Q_{q_1}^{\Gamma_1}\text{ and }w\in Q_{q_2}^{\Gamma_2}\right\}+\ker(\psi)\Bigg]\cap \Delta_{\Gamma, \ddb}
\end{align*}
\end{thm}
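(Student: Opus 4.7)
The plan is to prove the identity by double containment, using as the key input the Lauritzen factorization $q_{i_1,\ldots,i_m} = (q_1)_{i_\alpha}(q_2)_{i_\beta}/(q_S)_{i_S}$ for the MLE of a reducible hierarchical log-linear model, which already underlies Lemma~\ref{lem:diagram-commutes}.

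For the inclusion $\supseteq$, I would take a candidate $u = u^{(1)} + y$ with $u^{(1)}_{i_1,\ldots,i_m} := v_{i_\alpha}w_{i_\beta}/(q_S)_{i_S}$, where $v \in Q_{q_1}^{\Gamma_1}$, $w \in Q_{q_2}^{\Gamma_2}$, and $y \in \ker(\psi)$, and verify that $u_F = q_F$ for every facet $F$ of $\Gamma$. Since $\Gamma = \Gamma_1 \cup \Gamma_2$, each such $F$ lies in $\alpha$ or in $\beta$; without loss of generality $F \subseteq \alpha$, in which case $F$ is also a facet of $\Gamma_1$. Computing the $F$-marginal of $u^{(1)}$ by summing first over the indices in $\beta \setminus S$, and using $w \in Q_{q_2}^{\Gamma_2}$ together with $S \in \Gamma_2$, the inner sum $\sum_{j_{\beta\setminus S}} w_{j_\beta}$ collapses to $(q_S)_{j_S}$, which cancels the denominator and leaves the $F$-marginal of $v$; this equals $(q_1)_F = q_F$. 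The contribution of $y$ vanishes because $\ker(\psi)$ kills both $\alpha$- and $\beta$-marginals, and every facet of $\Gamma$ lies in one of these.

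For the inclusion $\subseteq$, given $u \in Q_q^{\Gamma}$, the natural choice is $v := \pi_1(u)$ and $w := \pi_2(u)$. Because marginalization is transitive and every facet of $\Gamma_1$ is a face of $\Gamma$, the identities $u_F = q_F$ at facets of $\Gamma$ propagate to $v_{F_1} = (q_1)_{F_1}$ for all facets $F_1$ of $\Gamma_1$, so $v \in Q_{q_1}^{\Gamma_1}$, and symmetrically $w \in Q_{q_2}^{\Gamma_2}$. Setting $u^{(1)}_{i_1,\ldots,i_m} := v_{i_\alpha}w_{i_\beta}/(q_S)_{i_S}$ and $y := u - u^{(1)}$, the same type of calculation used in the first direction shows $\pi_1(u^{(1)}) = v = \pi_1(u)$ and $\pi_2(u^{(1)}) = w = \pi_2(u)$, hence $\psi(y) = 0$. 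The intersection with $\Delta_{\Gamma,\ddb}$ on the right-hand side is essential, because neither summand is individually constrained to be nonnegative.

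The main technical obstacle is the careful bookkeeping around the splitting $[m] = (\alpha\setminus S) \sqcup S \sqcup (\beta\setminus S)$: I must track that the marginal of $w$ down to $S$ really equals $q_S$ (which needs $S \in \Gamma_2$ and transitivity of marginalization), and that facets of $\Gamma_1$ whose support happens to sit inside $S$ still give the correct equality in the argument above. A secondary issue is the case $(q_S)_{i_S} = 0$, which is handled by noting that $q \in \M_{\Gamma,\ddb}$ forces $q_j = 0$ on the entire $i_S$-slice whenever $(q_S)_{i_S} = 0$, so the fraction may be interpreted by the convention $0/0 = 0$ outside $\supp(q_S)$.
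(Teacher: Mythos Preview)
Your proposal is correct and follows essentially the same route as the paper's proof: double containment, with the product point $u^{(1)}_{i} = v_{i_\alpha}w_{i_\beta}/(q_S)_{i_S}$ as the anchor, the inner sum over $\beta\setminus S$ collapsing via $(w_S)_{i_S} = (q_S)_{i_S}$, and for the reverse inclusion the choice $v=\pi_1(u)$, $w=\pi_2(u)$ followed by checking $\psi(u-u^{(1)})=0$. Your remarks on the $S$-marginal bookkeeping and on the degenerate case $(q_S)_{i_S}=0$ are sound and in fact go slightly beyond what the paper spells out.
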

\begin{proof}
We proceed by double containment. To show that the right-hand side is contained in $Q_{q}^{\Gamma}$, let $u=u^{(1)}+u^{(2)}\in\Delta_{\Gamma, \ddb}$ where $u^{(1)}_{i_1,\cdots, i_m}=\frac{v_{i_\alpha}\cdot w_{i_\beta}}{(q_S)_{i_S}}$ for $v\in Q_{q_1}^{\Gamma_1}$, $w\in Q_{q_2}^{\Gamma_2}$ and $u^{(2)}\in\ker(\psi)$. Let $F = \{f_1, \ldots, f_k\}$ be any facet of $\Gamma$. Then $F$ is either in $\Gamma_1$ or in $\Gamma_2$. Without loss of generality, assume $F$ is in $\Gamma_1$. Then for any $i_F = (i_{f_1}, \ldots, i_{f_k})$, we have
\begin{align*}
    ((u^{(1)})_F)_{i_F}&=\sum_{\{j_\alpha:j_F=i_F\}} \left( \sum_{\{j \in \R :j_\alpha = i_\alpha, j_F=i_F\}}\frac{v_{j_\alpha}\cdot w_{j_\beta}}{(q_S)_{j_S}} \right) =\sum_{\{j_\alpha:j_F=i_F\}} \frac{v_{j_\alpha}\cdot (w_S)_{j_S}}{(q_S)_{j_S}}\\&=\sum_{\{j_\alpha:j_F=i_F\}} v_{j_\alpha}= (v_F)_{i_F}=(q_F)_{i_F}.
\end{align*}

Hence $u^{(1)}\in Q_q^{\Gamma}$. But since $u^{(2)}\in\ker(\psi)$, it has a zero $F$-marginal for every facet of $\Gamma$. Thus, $u=u^{(1)}+u^{(2)}\in Q_q^{\Gamma}$, as desired.

To show the reverse containment, let $u\in Q_q^{\Gamma}$ and let $v\in\Delta_{\Gamma, \ddb}$ be the point defined by
$$v_{i_1,\cdots, i_m} =\frac{(u_\alpha)_{i_\alpha}\cdot (u_\beta)_{i_\beta}}{(q_S)_{i_S}}$$
for all $(i_1,\ldots,i_m)$. We write $u=v+(u-v)$.
Since $u_\alpha = \pi_1(u) \in Q_{q_1}^{\Gamma_1}$
and $u_\beta = \pi_2(u) \in Q_{q_2}^{\Gamma_2}$, it  suffices to show that $u-v\in \ker\psi$. That is, we must show that $[(u-v)_\alpha]_{i_\alpha}=0$ and $[(u-v)_\beta]_{i_\beta}=0$. For any $i_{\alpha}$, $[(u-v)_\alpha]_{i_\alpha}$ is equal to
$$(u_\alpha)_{i_\alpha}-\sum_{\{j \in \R: j_\alpha = i_\alpha\}}\frac{ (u_\alpha)_{j_\alpha}\cdot (u_\beta)_{j_\beta}}{(q_S)_{j_S}}= (u_\alpha)_{i_\alpha}-\frac{(u_\alpha)_{i_\alpha}}{(q_S)_{i_S}} \sum_{\{j_\beta: j_S = i_S\}} (u_\beta)_{j_\beta} = (u_\alpha)_{i_\alpha}- \frac{(u_\alpha)_{i_\alpha} \cdot (u_S)_{i_S}}{(q_S)_{i_S}} =0.$$
Similarly, one shows that $(u-v)_{i_\beta}=0$ as well. Thus, $u-v\in\ker\psi$, and this concludes the proof.
\end{proof}
In this theorem, the first summand in the Minkowski sum
that appears in the decomposition of $Q_q^\Gamma$ is an 
interesting object. It is nonlinear and captures  a portion 
of $Q_q^\Gamma$. 
\begin{defn} \label{defn:product}
 Let $\Gamma$ be a reducible simplicial complex on $[m]$ with decomposition $(\Gamma_1,S,\Gamma_2)$ and $\ddb = (d_1, \ldots, d_m) \in \NN^m$. Let $p \in \Delta_{\Gamma, \ddb}$ and $p_i = \pi_i(p)$ for $i=1,2$. Then 
 the product of $Q_{p_1}^{\Gamma_1}$ and $Q_{p_2}^{\Gamma_2}$
 is defined as 
 \begin{align*}
Q_{p_1}^{\Gamma_1} \otimes_p Q_{p_2}^{\Gamma_2} =&\left\{u\in\Delta_{\Gamma, \ddb}: u_{i_1,\cdots, i_m}=\frac{v_{i_{\alpha}}\cdot w_{i_{\beta}}}{(p_S)_{i_S}}\text{ for }v\in Q_{p_1}^{\Gamma_1}\text{ and }w\in Q_{p_2}^{\Gamma_2}\right\}.
\end{align*}
\end{defn}
\begin{remark}
If $p' \in Q_p^\Gamma$ we get the equality 
of the logarithmic Voronoi polytopes $Q_{p'}^\Gamma =
Q_p^\Gamma$. Moreover, since $p_i' = \pi_i(p') \in Q_{p_i}^{\Gamma_i}$ for $i=1,2$, we see that
$Q_{p_i'}^{\Gamma_i} = Q_{p_i}^{\Gamma_i}$. Therefore,
$Q_{p_1}^{\Gamma_1} \otimes_p Q_{p_2}^{\Gamma_2} = 
Q_{p_1'}^{\Gamma_1} \otimes_{p'} Q_{p_2'}^{\Gamma_2}$.
In other words, the product depends only on the logarithmic Voronoi polytope $Q_p^\Gamma$ and not on the individual points in the polytope. 
\end{remark}

\begin{example}
Consider the complex $\Gamma=[12][13][23][24][34]$ for $m=4$. Suppose both $X_1, X_2$, $X_3$, and $X_4$ are binary random variables, i.e., $d_i = 2$ for all $i=1, \ldots, 4$. Let $\Gamma_1=[12][13][23]$ and $\Gamma_2=[23][24][34]$, so $S=\{2,3\}$. The logarithmic Voronoi polytopes $Q_{p_1}^{\Gamma_1}$
and $Q_{p_2}^{\Gamma_2}$ have dimension one whereas 
the dimension of $Q_p^{\Gamma}$ is six. This is consistent with
Theorem \ref{thm:decomposition-theorem} since $Q_{p_1}^{\Gamma_1} \otimes_p Q_{p_2}^{\Gamma_2}$ is a two-dimensional surface in
$\Delta_{15}$ and $\ker(\psi)$ has dimension four. More explicitly, if $v = (v_{ijk})$ and $w = (w_{jk\ell})$ are 
points in $Q_{p_1}^{\Gamma_1}$ and $Q_{p_2}^{\Gamma_2}$, respectively, where in particular $v_{+jk} = v_{1jk} + v_{2jk}$
and $w_{jk+} = w_{jk1} + w_{jk2}$ are equal to each other for
all $j,k = 1,2$, then $Q_{p_1}^{\Gamma_1} \otimes_p Q_{p_2}^{\Gamma_2}$ consists of points $u = (u_{ijk\ell})$
where 
$$u_{ijk \ell} = \frac{v_{ijk} \cdot w_{jk\ell}}{v_{+jk}}. $$
\end{example}

\subsection{Comparing divergences}\label{sec:comparing-divergences}
Since a reducible model $\M_{\Gamma, \ddb}$ associated
to a simplicial complex $\Gamma$ on $[m]$ with decomposition
$(\Gamma_1, S, \Gamma_2)$ has the two associated models
$\M_{\Gamma_1, \ddb_\alpha}$ and $\M_{\Gamma_2, \ddb_\beta}$
it is natural to ask how the divergences from these three models are related. Before we present our contributions we wish to cite two results of Mat\'{u}\v{s} that are relevant.

\begin{prop} \label{prop:divergence-bound-1} \cite[Lemma 3]{Mat09} For any $p 
\in \Delta_{\Gamma, \ddb}$ and a reducible model 
$\M_{\Gamma, \ddb}$, 
$$ D_{\M_{\Gamma, \ddb}}(p) = D_{\M_{\Gamma_1, \ddb_\alpha}}(\pi_1(p)) + D_{\M_{\Gamma_2, \ddb_\beta}}(\pi_2(p)) + H(\pi_1(p)) + H(\pi_2(p)) - H(p) - H((\pi_1' \circ \pi_1)(p))$$
where $H(\cdot)$ is the entropy.     
\end{prop}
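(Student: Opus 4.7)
The strategy is to reduce the identity to a direct calculation once we have the factorization of the MLE on a reducible model. Let $q$ be the MLE of $p$ in $\M_{\Gamma, \ddb}$, and write $p_1 = \pi_1(p)$, $p_2 = \pi_2(p)$, $q_1 = \pi_1(q)$, $q_2 = \pi_2(q)$. The argument in the proof of Lemma~\ref{lem:diagram-commutes} (which in turn invokes \cite[Prop.~4.1.4]{Lau96}) shows that $q_1$ is the MLE of $p_1$ in $\M_{\Gamma_1, \ddb_\alpha}$, that $q_2$ is the MLE of $p_2$ in $\M_{\Gamma_2, \ddb_\beta}$, and that
\[
q_{i_1, \ldots, i_m} = \frac{(q_1)_{i_\alpha} \cdot (q_2)_{i_\beta}}{(q_S)_{i_S}},
\]
where $q_S$ is the common $S$-marginal of $q_1$ and $q_2$. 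This is the engine of the proof.

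I would then expand $D_{\M_{\Gamma, \ddb}}(p) = D(p \parallel q) = -H(p) - \sum_i p_i \log q_i$ and substitute the factorization to split the second term into three contributions:
\[
-\sum_i p_i \log q_i = -\sum_{i_\alpha} (p_1)_{i_\alpha} \log (q_1)_{i_\alpha} - \sum_{i_\beta} (p_2)_{i_\beta} \log (q_2)_{i_\beta} + \sum_{i_S} (p_S)_{i_S} \log (q_S)_{i_S},
\]
where each sum on the right is obtained by marginalizing $p$ against the corresponding factor. By Birch's theorem applied to $\M_{\Gamma_1, \ddb_\alpha}$, the $S$-marginals of $p_1$ and $q_1$ agree (since $S$ is a face of some facet of $\Gamma_1$), hence $(\pi_1' \circ \pi_1)(p) = q_S$ and the third sum equals $-H((\pi_1' \circ \pi_1)(p))$. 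For the first two sums I would add and subtract the appropriate entropy to uncover the sub-divergences: $-\sum_{i_\alpha}(p_1)_{i_\alpha}\log(q_1)_{i_\alpha} = D(p_1 \parallel q_1) + H(p_1) = D_{\M_{\Gamma_1, \ddb_\alpha}}(p_1) + H(p_1)$, and analogously for the $\beta$-term. Collecting all contributions yields the stated identity.

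The only subtlety I anticipate is the validity of the factorization of $q$ when some entries of $q_S$ vanish, so that the quotient $(q_1)_{i_\alpha}(q_2)_{i_\beta}/(q_S)_{i_S}$ is $0/0$. This is precisely what is absorbed by Lemma~\ref{lem:diagram-commutes}, together with the conventions $0\log 0 = 0$ and $0\log(0/0)=0$ recalled in the introduction; those conventions also guarantee that the three sums above are well-defined when $\supp(p)$ is proper. Beyond this bookkeeping on supports, the computation is purely formal, so I do not expect a deeper obstacle.
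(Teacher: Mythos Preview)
Your argument is correct. The factorization of the MLE $q$ via \cite[Prop.~4.1.4]{Lau96}, together with the fact that $p$ and $q$ share all facet marginals (hence in particular the $S$-marginal, so $p_S=q_S$), yields exactly the decomposition you wrote, and the add-and-subtract step recovers the two sub-divergences plus the entropy terms. One small point: Lemma~\ref{lem:diagram-commutes} as stated shows $q_1\in\M_{\Gamma_1,\ddb_\alpha}$; to conclude that $q_1$ is the MLE of $p_1$ you also need that $p_1$ and $q_1$ have the same $\Gamma_1$-facet marginals, which follows immediately since every facet of $\Gamma_1$ is a facet of $\Gamma$. This is implicit in what you wrote but worth making explicit.

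As for comparison: the paper does not supply its own proof of this proposition. It is quoted verbatim from Mat\'{u}\v{s} \cite[Lemma~3]{Mat09} as background, so there is nothing to compare against. Your direct computation is essentially the standard one and is what Mat\'{u}\v{s}'s proof amounts to as well.
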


\begin{prop} \label{prop:divergence-bound-2} \cite[Corollary 3]{Mat09} For a hierarchical 
log-linear 
model $\M_{\Gamma, \ddb}$ we have 
$$ D(\M_{\Gamma, \ddb}) \leq \underset{F \mbox{ facet of } \Gamma}{\min}  \left\{ \sum_{i \not \in F} \log d_i \right\}. $$
\end{prop}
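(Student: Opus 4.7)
The plan is to prove the bound by constructing, for each facet $F$ of $\Gamma$, an explicit distribution $q_F \in \M_{\Gamma, \ddb}$ that depends on $p$ only through its $F$-marginal, and showing that $D(p \parallel q_F)$ never exceeds $\sum_{i \notin F} \log d_i$. Since $D_{\M_{\Gamma, \ddb}}(p) \leq D(p \parallel q_F)$ for every such $q_F$, taking the maximum over $p$ and then the minimum over facets $F$ yields the claim.

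Fix a facet $F^\ast$ of $\Gamma$ and a point $p \in \Delta_{\Gamma, \ddb}$. I would set $N = \prod_{j \notin F^\ast} d_j$ and define
$$q_i \, := \, \frac{p_{i_{F^\ast}}}{N}, \qquad i \in \R.$$
The first step is to verify $q \in \M_{\Gamma, \ddb}$. Using the factorization in the definition of $\M_{\Gamma, \ddb}$, I would assign $\theta^{(F^\ast)}_{i_{F^\ast}} = p_{i_{F^\ast}}$ and $\theta^{(F)}_{i_F} = 1$ for every other facet $F$; summing yields the normalizing constant $Z(\theta) = N$, which realizes $q$ as a member of the model. (Alternatively, one may observe that $q$ is a product of the marginal on $X_{F^\ast}$ with the uniform distribution on the remaining coordinates, and that such products satisfy the hierarchical factorization for $\Gamma$.)

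Next I would compute directly:
\begin{align*}
D(p \parallel q)
&= \sum_{i \in \R} p_i \log \frac{p_i}{q_i}
= \sum_{i \in \R} p_i \log \frac{p_i \, N}{p_{i_{F^\ast}}} \\
&= \log N + \sum_{i \in \R} p_i \log \frac{p_i}{p_{i_{F^\ast}}}
= \log N - H\!\left(X_{[m]\setminus F^\ast} \mid X_{F^\ast}\right),
\end{align*}
where the last equality uses $p_i / p_{i_{F^\ast}}$ as the conditional probability of $X_{[m]\setminus F^\ast}$ given $X_{F^\ast}$ under $p$ (terms with $p_{i_{F^\ast}} = 0$ force $p_i = 0$ and contribute $0$ by the standing convention). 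Since conditional entropy is nonnegative,
$$D_{\M_{\Gamma,\ddb}}(p) \, \leq \, D(p \parallel q) \, \leq \, \log N \, = \, \sum_{i \notin F^\ast} \log d_i.$$

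Taking the supremum over $p \in \Delta_{\Gamma, \ddb}$ gives $D(\M_{\Gamma, \ddb}) \leq \sum_{i \notin F^\ast} \log d_i$, and since $F^\ast$ was an arbitrary facet, the minimum over all facets is an upper bound. The only mildly delicate point is confirming that the constructed $q$ genuinely lies in $\M_{\Gamma, \ddb}$ rather than merely in its closure; this is immediate if $p$ is strictly positive and follows in general by a limiting argument, which is the only step I would expect to require care.
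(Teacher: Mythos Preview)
Your argument is correct. The paper does not give its own proof of this proposition; it simply cites \cite[Corollary 3]{Mat09} and moves on. So there is no in-paper proof to compare against, but your approach---producing, for each facet $F^\ast$, the distribution $q = p_{F^\ast} \otimes \mathrm{uniform}_{[m]\setminus F^\ast}$ inside $\M_{\Gamma,\ddb}$ and reading off $D(p\parallel q)=\log N - H(X_{[m]\setminus F^\ast}\mid X_{F^\ast})$---is exactly the standard one-line argument behind this bound. The only point that needed care, namely that $q$ may lie on the boundary of the model when $p$ has vanishing $F^\ast$-marginals, you identified and handled correctly: either approximate $p$ by strictly positive distributions, or note that $\supp(p)\subseteq\supp(q)$ so the divergence is finite and $D_{\M_{\Gamma,\ddb}}(p)\le D(p\parallel q)$ still holds by lower semicontinuity.
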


With regards to Proposition \ref{prop:divergence-bound-1}
we point out that the four entropy terms together give a nonnegative quantity because of the strong subadditivity property of entropy. Therefore, for a reducible model 
we get the inequality $ D_{\M_{\Gamma, \ddb}}(p) \geq D_{\M_{\Gamma_1, \ddb_\alpha}}(\pi_1(p)) + D_{\M_{\Gamma_2, \ddb_\beta}}(\pi_2(p))$. We state and prove a similar
inequality in Corollary \ref{cor:bound-reducible}. In the case when the point $p$ lives in the product portion of its logarithmic Voronoi polytope (as in Theorem \ref{thm:decomposition-theorem}), we recover the equality below.
\begin{prop} \label{prop:divergence-reducible}
 Let $\Gamma$ be a reducible simplicial complex on $[m]$ with decomposition $(\Gamma_1,S,\Gamma_2)$ and $\ddb = (d_1, \ldots, d_m) \in \NN^m$. Let $p \in \Delta_{\Gamma, \ddb}$ and $p_i = \pi_i(p)$ for $i=1,2$.  If
  $u = v \otimes_p w$ where $v \in Q_{p_1}^{\Gamma_1}$
 and $w \in Q_{p_2}^{\Gamma_2}$, then
 $D_{\M_{\Gamma, \ddb}}(u) = D_{\M_{\Gamma_1, \ddb_\alpha}}(v) +  D_{\M_{\Gamma_2, \ddb_\beta}}(w)$.
\end{prop}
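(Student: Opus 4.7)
The plan is to exploit the product structure of $u = v \otimes_p w$ and the corresponding product structure of its MLE in $\M_{\Gamma, \ddb}$, so that the divergence sum splits as desired.

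First I would identify the marginals of $u$. Since $v \in Q_{p_1}^{\Gamma_1}$ and $w \in Q_{p_2}^{\Gamma_2}$, both $v$ and $w$ have the same $S$-marginal as $p$, namely $v_S = w_S = p_S$. Summing out the appropriate indices in the defining formula $u_{i_1,\ldots, i_m}=\frac{v_{i_{\alpha}} w_{i_{\beta}}}{(p_S)_{i_S}}$, and using $w_S = p_S$, gives $\pi_1(u) = v$; symmetrically, $\pi_2(u) = w$.

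Next I would identify the MLE $\hat u$ of $u$ in $\M_{\Gamma, \ddb}$. Let $\hat v \in \M_{\Gamma_1, \ddb_\alpha}$ and $\hat w \in \M_{\Gamma_2, \ddb_\beta}$ be the MLEs of $v$ and $w$ in their component models. Then $\hat v$ and $v$ share marginals on every facet of $\Gamma_1$, and $\hat w$ and $w$ share marginals on every facet of $\Gamma_2$; in particular $\hat v_S = v_S = p_S = w_S = \hat w_S$. I would then define
$$\hat u_{i_1, \ldots, i_m} \;=\; \frac{\hat v_{i_\alpha}\, \hat w_{i_\beta}}{(\hat v_S)_{i_S}}.$$
By \cite[Prop.~4.1.4]{Lau96}, $\hat u$ lies in $\M_{\Gamma, \ddb}$. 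To see that $\hat u$ is actually the MLE of $u$, the cleanest route is to verify that $\hat u$ matches $u$ on the $F$-marginals for every facet $F$ of $\Gamma$; by Birch's theorem this is enough. Since every such facet lies in $\Gamma_1$ or in $\Gamma_2$, say $F \subseteq \alpha$, a direct computation (identical in form to the one in the proof of Theorem \ref{thm:decomposition-theorem}) gives $(\hat u_F)_{i_F} = (\hat v_F)_{i_F} = (v_F)_{i_F} = (u_F)_{i_F}$, where the middle equality is Birch's theorem in the sub-model and the outer two use $\hat w_S = \hat v_S$ and $w_S = v_S$ respectively.

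Having both $u$ and $\hat u$ in product form with the same denominator, the key algebraic observation is that
$$\log \frac{u_{i_1,\ldots,i_m}}{\hat u_{i_1, \ldots, i_m}} \;=\; \log \frac{v_{i_\alpha}}{\hat v_{i_\alpha}} \;+\; \log \frac{w_{i_\beta}}{\hat w_{i_\beta}},$$
since the common factor $(p_S)_{i_S} = (\hat v_S)_{i_S}$ cancels inside the logarithm. Substituting into $D(u \parallel \hat u)$ and splitting the sum, the first piece becomes $\sum_{i_\alpha} v_{i_\alpha} \log(v_{i_\alpha}/\hat v_{i_\alpha})$ after summing $w_{i_\beta}/(p_S)_{i_S}$ over the free indices $i_{\beta \setminus S}$ (which yields $1$ because $w_S = p_S$), and analogously for the second piece. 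This gives exactly $D_{\M_{\Gamma_1, \ddb_\alpha}}(v) + D_{\M_{\Gamma_2, \ddb_\beta}}(w)$.

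The main obstacle is the identification of $\hat u$ as a Lauritzen product of the component MLEs; once that is in hand the divergence splits almost by inspection. The routine marginal computations are the only minor bookkeeping required.
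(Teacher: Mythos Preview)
Your proof is correct and follows essentially the same approach as the paper: identify the MLE of $u$ as the Lauritzen product $\hat v \otimes \hat w$ of the component MLEs, cancel the common $S$-marginal factor inside the logarithm, and then collapse each resulting sum using $(u_\alpha) = v$ and $(u_\beta) = w$. The only difference is packaging: the paper cites \cite[Prop.~4.1.4]{Lau96} directly for the product form of the MLE, whereas you spell out the Birch-theorem verification that $\hat u$ matches $u$ on all facet marginals, making the argument slightly more self-contained.
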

\begin{proof}
Let $t \in \M_{\Gamma_1, \ddb_\alpha}$ and $r \in \M_{\Gamma_2, \ddb_\beta}$ be the respective 
maximum likelihood estimators of $v$ and $w$. Similarly, 
let $q \in \M_{\Gamma, \ddb}$ be the maximum likelihood
estimator of $u$. By \cite[Prop 4.1.4]{Lau96}, $q = t \otimes_p r$ and 
$$D_{\M_{\Gamma, \ddb}}(u) \, = \, \sum_{i \in \R} u_i \log(u_i / q_i) \, = \, \sum_{i \in \R} u_i \log(v_{i_\alpha} / t_{i_\alpha}) + \sum_{i \in \R} u_i \log(w_{i_\beta} / r_{i_\beta}).$$
Since $(u_\alpha)_{i_\alpha} = \sum_{j \in R: j_\alpha = i_\alpha} u_j = v_{i_\alpha}$ and $(u_\beta)_{i_\beta} = \sum_{j \in R: j_\beta = i_\beta} u_j = w_{i_\beta}$ (see
the proof of Lemma \ref{lem:diagram-commutes})
we conclude that
$$D_{\M_{\Gamma, \ddb}}(u) \, = \, \sum_{i_\alpha} v_{i_\alpha} \log(v_{i_\alpha} / t_{i_\alpha}) +  \sum_{i_\beta} w_{i_\beta} \log(w_{i_\beta} / r_{i_\beta}) = D_{\M_{\Gamma_1, \ddb_\alpha}}(v) +  D_{\M_{\Gamma_2, \ddb_\beta}}(w).$$
\end{proof}
\begin{cor}
Let $\Gamma$ be a reducible simplicial complex on $[m]$ with decomposition $(\Gamma_1,S,\Gamma_2)$ and $\ddb = (d_1, \ldots, d_m) \in \NN^m$. Let $p \in \Delta_{\Gamma, \ddb}$ and $p_i = \pi_i(p)$ for $i=1,2$. Suppose $v \in Q_{p_1}^{\Gamma_1}$ maximizes the divergence 
to $\M_{\Gamma_1, \ddb_\alpha}$ over all points in $Q_{p_1}^{\Gamma_1}$. Similarly, suppose $w \in Q_{p_2}^{\Gamma_2}$ and $p' \in Q_p^\Gamma$ be such maximizers. Then $D_{\M_{\Gamma, \ddb}}(p') \geq D_{\M_{\Gamma_1, \ddb_\alpha}}(v) +  D_{\M_{\Gamma_2, \ddb_\beta}}(w)$.
\end{cor}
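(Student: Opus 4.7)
The plan is to exhibit a single point in $Q_p^\Gamma$ whose divergence equals the right-hand side, and then to invoke the fact that $p'$ maximizes $D_{\M_{\Gamma,\ddb}}$ over all of $Q_p^\Gamma$. The natural candidate is the ``product'' point $u := v \otimes_{p} w$, whose entries are
$$u_{i_1,\ldots,i_m} = \frac{v_{i_\alpha}\cdot w_{i_\beta}}{(p_S)_{i_S}},$$
as in Definition \ref{defn:product}. Once we have $u \in Q_p^\Gamma$, we are done: Proposition \ref{prop:divergence-reducible} gives $D_{\M_{\Gamma, \ddb}}(u) = D_{\M_{\Gamma_1, \ddb_\alpha}}(v) + D_{\M_{\Gamma_2, \ddb_\beta}}(w)$, and $p'$ being a maximizer of $D_{\M_{\Gamma, \ddb}}$ over $Q_p^\Gamma$ yields $D_{\M_{\Gamma, \ddb}}(p') \geq D_{\M_{\Gamma, \ddb}}(u)$.

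To conclude that $u \in Q_p^\Gamma$, I would apply Theorem \ref{thm:decomposition-theorem} after passing through the MLE. Specifically, let $q \in \M_{\Gamma, \ddb}$ be the MLE of $p$, so by Birch's theorem $Q_p^\Gamma = Q_q^\Gamma$. By Lemma \ref{lem:diagram-commutes}, $q_1 = \pi_1(q)$ and $q_2 = \pi_2(q)$ are the MLEs of $p_1$ and $p_2$ in the respective sub-models, and by the same appeal to Birch we have $Q_{p_i}^{\Gamma_i} = Q_{q_i}^{\Gamma_i}$ for $i=1,2$. Moreover, by the remark following Definition \ref{defn:product}, the product $\otimes_p$ depends only on the polytope $Q_p^\Gamma$ (equivalently, on $Q_q^\Gamma$), so we may form $u = v \otimes_q w$ and invoke the decomposition theorem directly, which places $u$ in the first summand of the Minkowski decomposition of $Q_q^\Gamma = Q_p^\Gamma$.

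The only minor obstacle is bookkeeping: one must make sure the point $u$ is in fact in the probability simplex $\Delta_{\Gamma, \ddb}$ (nonnegativity and total mass one), and that division by $(p_S)_{i_S}$ is well-defined. The first follows since $v, w \geq 0$ and $(p_S)_{i_S} \geq 0$; the potentially zero denominators can be handled by adopting the convention that any index $i_S$ with $(p_S)_{i_S}=0$ forces $v_{i_\alpha} = w_{i_\beta} = 0$ (which one checks from $v \in Q_{p_1}^{\Gamma_1}$ and $w \in Q_{p_2}^{\Gamma_2}$ via the marginalization constraints defining these polytopes), so that the corresponding entries of $u$ are set to zero. Once this routine verification is in place, the chain of inequalities
$$D_{\M_{\Gamma,\ddb}}(p') \;\geq\; D_{\M_{\Gamma,\ddb}}(u) \;=\; D_{\M_{\Gamma_1,\ddb_\alpha}}(v) + D_{\M_{\Gamma_2,\ddb_\beta}}(w)$$
finishes the proof.
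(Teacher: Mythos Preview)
Your proposal is correct and follows essentially the same approach as the paper: form $u = v \otimes_p w$, note that $u \in Q_p^\Gamma$ (via Theorem \ref{thm:decomposition-theorem}), apply Proposition \ref{prop:divergence-reducible} to get $D_{\M_{\Gamma,\ddb}}(u) = D_{\M_{\Gamma_1,\ddb_\alpha}}(v) + D_{\M_{\Gamma_2,\ddb_\beta}}(w)$, and then use that $p'$ maximizes over $Q_p^\Gamma$. The paper's proof is a one-liner invoking exactly these ingredients; your additional bookkeeping (passage to the MLE via Birch, handling of zero denominators) is sound but goes beyond what the paper spells out.
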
    
\begin{proof}
 Use Proposition \ref{prop:divergence-reducible} with
 $D_{\M_{\Gamma, \ddb}}(p') \geq D_{\M_{\Gamma, \ddb}}(u)$
 where $u = v \otimes_p w$. 
\end{proof}
The corollary has the following implication for the maximum
divergence to a reducible model. 
\begin{cor} \label{cor:bound-reducible}
Let $\Gamma$ be a reducible simplicial complex on $[m]$ with decomposition $(\Gamma_1,S,\Gamma_2)$ and $\ddb = (d_1, \ldots, d_m) \in \NN^m$. Let $p \in \Delta_{\Gamma, \ddb}$ be a point 
that attains the maximum divergence $D(\M_{\Gamma, \ddb})$ and $p_i = \pi_i(p)$ for $i=1,2$. If $Q_{p_1}^{\Gamma_1}$ and
$Q_{p_2}^{\Gamma_2}$ contain points which attain the 
maximum divergence $D(\M_{\Gamma_1, \ddb_\alpha})$ and
$D(\M_{\Gamma_2, \ddb_\beta})$, respectively, then 
$D(\M_{\Gamma, \ddb}) \geq D(\M_{\Gamma_1, \ddb_\alpha}) +  D(\M_{\Gamma_2, \ddb_\beta})$.
\end{cor}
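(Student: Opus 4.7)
The plan is to produce a single distribution $u \in \Delta_{\Gamma,\ddb}$ whose divergence to $\M_{\Gamma,\ddb}$ already equals $D(\M_{\Gamma_1,\ddb_\alpha}) + D(\M_{\Gamma_2,\ddb_\beta})$, and then compare $u$ against the global maximizer $p$. By hypothesis, I can select $v \in Q_{p_1}^{\Gamma_1}$ with $D_{\M_{\Gamma_1,\ddb_\alpha}}(v) = D(\M_{\Gamma_1,\ddb_\alpha})$ and $w \in Q_{p_2}^{\Gamma_2}$ with $D_{\M_{\Gamma_2,\ddb_\beta}}(w) = D(\M_{\Gamma_2,\ddb_\beta})$.

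Next, I would form the product $u = v \otimes_p w$ from Definition \ref{defn:product}. By Theorem \ref{thm:decomposition-theorem}, $Q_p^\Gamma$ is obtained as the intersection of $\Delta_{\Gamma,\ddb}$ with the Minkowski sum of the product locus and $\ker(\psi)$; taking the kernel summand to be zero exhibits $u$ as an element of $Q_p^\Gamma \subseteq \Delta_{\Gamma,\ddb}$. Proposition \ref{prop:divergence-reducible} then supplies the identity
\begin{equation*}
D_{\M_{\Gamma,\ddb}}(u) \,=\, D_{\M_{\Gamma_1,\ddb_\alpha}}(v) + D_{\M_{\Gamma_2,\ddb_\beta}}(w) \,=\, D(\M_{\Gamma_1,\ddb_\alpha}) + D(\M_{\Gamma_2,\ddb_\beta}).
\end{equation*}

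To conclude, I would invoke the global maximality of $p$: since $u$ lies in $\Delta_{\Gamma,\ddb}$, we have $D(\M_{\Gamma,\ddb}) = D_{\M_{\Gamma,\ddb}}(p) \geq D_{\M_{\Gamma,\ddb}}(u)$, and chaining this with the previous equality yields the desired bound. I do not expect any serious obstacle here; essentially the whole argument is assembled from Theorem \ref{thm:decomposition-theorem} and Proposition \ref{prop:divergence-reducible}. The only bookkeeping to verify is that $v \otimes_p w$ is well-defined and has nonnegative entries, which is built into the statement of Theorem \ref{thm:decomposition-theorem}: both $v$ and $w$ marginalize onto the separator $S$ to the common distribution $p_S$, so the denominator $(p_S)_{i_S}$ only vanishes on indices where the numerator $v_{i_\alpha} w_{i_\beta}$ vanishes as well.
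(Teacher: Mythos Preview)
Your proof is correct and follows essentially the same approach as the paper: form $u = v \otimes_p w$ from the two given maximizers, apply Proposition~\ref{prop:divergence-reducible} to obtain $D_{\M_{\Gamma,\ddb}}(u) = D(\M_{\Gamma_1,\ddb_\alpha}) + D(\M_{\Gamma_2,\ddb_\beta})$, and then compare against the global maximizer $p$. The paper routes this through the intermediate corollary just before Corollary~\ref{cor:bound-reducible}, but the content is identical.
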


\subsection{Independence and related models}
We have already encountered  an independence model in Example \ref{ex:2x3-chamber}. More generally, for discrete
random variables $X_1, \ldots, X_m$ with respective state 
spaces $[d_i]$, the independence model is the hierarchical
log-linear model on $[m]$ associated to the simplicial complex $\Gamma$ consisting of just the $m$ vertices $\Gamma = [1][2]\cdots [m]$. This is a reducible (in fact decomposable) model. Proposition \ref{prop:divergence-bound-2}
immediately implies the following (see also \cite{AK06}).
\begin{cor} \label{cor:independence-bound}
Let $\M$ be the independence model of $m$ discrete random variables $X_1,\ldots, X_m$ with state spaces $[d_i]$, respectively, where $d_1 \leq d_2 \leq \cdots \leq d_m$. Then
$$ D(\M) \leq \log d_1 + \cdots + \log d_{m-1}.$$
\end{cor}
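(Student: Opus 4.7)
The plan is to apply Proposition \ref{prop:divergence-bound-2} directly, after identifying the correct simplicial complex. First I would observe that the independence model $\M$ is exactly the hierarchical log-linear model $\M_{\Gamma,\ddb}$ with $\Gamma = [1][2]\cdots[m]$ and $\ddb = (d_1,\ldots,d_m)$: the parametrization $p_{i_1,\ldots,i_m} = \theta_{i_1}^{(1)} \theta_{i_2}^{(2)} \cdots \theta_{i_m}^{(m)}/Z(\theta)$ matches the independence parametrization $p_{i_1,\ldots,i_m} = \prod_j \Prob(X_j = i_j)$ after absorbing the normalizing constants. The facets of $\Gamma$ are precisely the $m$ singletons $\{1\}, \{2\}, \ldots, \{m\}$.

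Next I would plug these facets into the bound from Proposition \ref{prop:divergence-bound-2}. For the facet $F = \{j\}$ we have
$$\sum_{i \not\in F} \log d_i \;=\; \sum_{i \neq j} \log d_i \;=\; \Big(\sum_{i=1}^m \log d_i\Big) - \log d_j,$$
so the minimum over facets is attained by choosing $j$ with $\log d_j$ as large as possible. Using the assumption $d_1 \leq d_2 \leq \cdots \leq d_m$, the maximum of $\log d_j$ is $\log d_m$, and this minimum equals $\log d_1 + \log d_2 + \cdots + \log d_{m-1}$.

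Combining these two steps gives
$$D(\M) \;=\; D(\M_{\Gamma,\ddb}) \;\leq\; \min_{F \text{ facet of }\Gamma} \sum_{i \not\in F} \log d_i \;=\; \log d_1 + \cdots + \log d_{m-1},$$
as claimed. There is no substantive obstacle here since the result is essentially a specialization of the cited inequality; the only point to verify carefully is that the facets of $\Gamma = [1][2]\cdots[m]$ are the singletons (and not, say, the empty set), which is immediate from the definition of an independence complex.
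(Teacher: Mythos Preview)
Your proof is correct and follows exactly the approach the paper indicates: the paper simply states that Proposition \ref{prop:divergence-bound-2} immediately implies the corollary, and you have spelled out precisely that specialization by identifying $\Gamma=[1][2]\cdots[m]$, its singleton facets, and computing the minimum.
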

Those independence models which achieve the upper bound in
this result have been characterized \cite[Theorem 4.4]{AK06}. 
For instance, when $m=2$ as well as in the case of
$d_1 = \cdots = d_m$, the upper bound is achieved. Since we will use it later we record a precise result regarding the latter case. For this, let $S_{d+1}$ denote the group of permutations on $\{0,1,\ldots, d\}$ and let $\delta$ denote the Dirac delta (indicator) function.
\begin{theorem}\label{thm:multi-information-max}\cite{AK06}
Let $\M$ be the independence model of $m$ $(d+1)$-ary random variables. Then the maximum divergence from $\M$ is $D(\M)=(m-1)\log (d+1)$. This maximum value is achieved at vertices of the logarithmic Voronoi polytope at the unique point $q=\left(\frac{1}{(d+1)^m},\ldots,\frac{1}{(d+1)^m}\right)$. Each maximizer has the form $\frac{1}{d+1}\sum_{j=0}^{d}\delta_{j,\sigma_2(j),\ldots, \sigma_{m}(j)}$ where $\sigma_i\in S_{d+1}$ for all $i=2,\ldots, m$. 
\end{theorem}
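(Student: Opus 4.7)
The plan is to establish the claim by combining the upper bound from Corollary \ref{cor:independence-bound} with an explicit verification that the proposed points attain it, then invoking the characterization of \cite{AK06} for the converse direction.

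First I would record the upper bound. Since all $d_i = d+1$, Corollary \ref{cor:independence-bound} immediately gives $D(\M) \leq (m-1)\log(d+1)$. The bulk of the work is to exhibit distributions achieving this value and to identify their maximum likelihood estimates. Let $p_\sigma = \frac{1}{d+1}\sum_{j=0}^{d}\delta_{j,\sigma_2(j),\ldots,\sigma_m(j)}$ for a tuple $\sigma = (\sigma_2,\ldots,\sigma_m) \in S_{d+1}^{m-1}$. Because each $\sigma_i$ is a bijection, the $X_i$-marginal of $p_\sigma$ is $\frac{1}{d+1}(1,1,\ldots,1)$ for every $i$.

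Next I would compute the MLE. For the independence model, the sufficient statistics encoded by $A$ are precisely the single-variable marginals, so Birch's theorem says the MLE $q$ of $p_\sigma$ is the unique element of $\M$ whose marginals coincide with those of $p_\sigma$. Since every marginal is uniform, $q$ factorizes as a product of uniform distributions, hence $q_{i_1,\ldots,i_m} = (d+1)^{-m}$ for all $(i_1,\ldots,i_m)$. The support of $p_\sigma$ consists of exactly $d+1$ cells, each carrying mass $1/(d+1)$, so
\begin{equation*}
D(p_\sigma \parallel q) = \sum_{j=0}^{d} \frac{1}{d+1}\log\frac{1/(d+1)}{1/(d+1)^m} = (m-1)\log(d+1),
\end{equation*}
which matches the upper bound. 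Thus each $p_\sigma$ is a global maximizer, and the common MLE is the uniform distribution $q$; hence every maximizer is a point of the single logarithmic Voronoi polytope $Q_q$, which by Proposition \ref{prop:vertex-does-it} forces them to be vertices of $Q_q$.

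Finally I would address the converse: that every global maximizer is of the permutation form above. Here I would appeal directly to \cite[Theorem 4.4]{AK06}, which characterizes the distributions achieving equality in the multi-information bound of Corollary \ref{cor:independence-bound} as precisely those supported on the graph of a collection of bijections between state spaces, with uniform weights. In the symmetric case $d_1 = \cdots = d_m = d+1$ this description specializes exactly to the stated form $\frac{1}{d+1}\sum_{j=0}^{d}\delta_{j,\sigma_2(j),\ldots,\sigma_m(j)}$. The main obstacle, if one wanted a self-contained argument, would be reproving this converse: one would need to track the equality conditions in the strong subadditivity / chain-rule decomposition underlying Proposition \ref{prop:divergence-bound-2} and deduce that each conditional distribution must collapse to a deterministic bijection. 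Since that analysis is carried out in \cite{AK06}, citing it completes the proof.
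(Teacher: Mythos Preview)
Your proposal is correct. The paper does not supply its own proof of this theorem: it is stated with the citation \cite{AK06} in the header and no argument follows, so there is nothing to compare against beyond noting that your sketch is entirely consistent with how the paper uses the result. Your verification of the upper bound via Corollary \ref{cor:independence-bound}, the direct computation that each $p_\sigma$ has uniform marginals (hence MLE the uniform distribution $q$) and divergence exactly $(m-1)\log(d+1)$, and the appeal to \cite[Theorem 4.4]{AK06} for the converse are all sound; this is precisely the decomposition one would expect, and your honest flag that the converse is the nontrivial part requiring the equality analysis from \cite{AK06} is accurate.
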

Now we wish to illustrate the utility of Corollary \ref{cor:bound-reducible} for independence models in two examples. 
\begin{example} \label{ex:independence2x2x2}
The independence model $\M_{222}$ of three binary variables is a $3$-dimensional model in $\Delta_7$. We denote the coordinates
of the points in $\Delta_7$ by $p_{ijk}$ where $i,j,k = 1,2$. 
By using Corollary \ref{cor:bound-reducible} we can
show that $D(\M_{222}) = 2 \log 2$. Note that this model is
reducible with $\Gamma = (\Gamma_1, S, \Gamma_2)$ where
$\Gamma_1 = [1][2]$, $\Gamma_2 = [2][3]$, and $S= \{2\}$. 
The models $\M_{\Gamma_1}$ and $\M_{\Gamma_2}$ are themselves
independence models of two binary variables. In Example \ref{ex:two-binary-RV} we saw that  $D(\M_{\Gamma_1}) = D(\M_{\Gamma_2}) = \log 2$ where there are exactly two maximizers 
$$v= (v_{11}, v_{12}, v_{21}, v_{22}) = (\frac{1}{2}, 0, 0, \frac{1}{2}) \mbox{  and  } 
w = (w_{11}, w_{12}, w_{21}, w_{22}) = (\frac{1}{2}, 0, 0, \frac{1}{2}).$$
We will view $v$ and $w$ as elements of the logarithmic Voronoi
polytopes $Q_v^{\Gamma_1}$ and $Q_w^{\Gamma_2}$, respectively. 
These two polytopes are {\it compatible} in the sense that 
$\pi_1'(v) = (v_{+1}, v_{+2}) = (\frac{1}{2}, \frac{1}{2})$
is equal to $\pi_2'(w) = (w_{1+}, w_{2+}) = (\frac{1}{2}, \frac{1}{2})$. In other words, there exists a logarithmic Voronoi polytope 
$Q_p^{\Gamma}$ such that $Q_v^{\Gamma_1} \otimes Q_w^{\Gamma_2} \subset Q_p^{\Gamma}$. Here $p = v \otimes w$ where 
$p_{ijk} = \frac{v_{ij} w_{jk}}{v_{+j}}$, and we see that
$p_{112} = p_{221} = \frac{1}{2}$. Since 
$D_{\M_{222}}(p) = D(\M_{\Gamma_1}) + D(\M_{\Gamma_2}) = \log 2 + \log 2$ we conclude that $D(\M_{222}) = 2 \log 2$.
\end{example}
\begin{example} \label{ex:independence2x3x3}
Now we consider the independence model $M_{233}$. Corollary \ref{cor:independence-bound} states that $D(\M_{233}) \leq \log 2 + \log 3$, but this bound cannot be attained by 
\cite[Theorem 4.4]{AK06}. We wish to provide a rationale based on 
Corollary \ref{cor:bound-reducible}. The model $\M_{\Gamma_1}$
is the independence model of a binary and a ternary random
variables, and the model $\M_{\Gamma_2}$ is the independence 
model of two ternary random variables. By Example \ref{ex:2x3-chamber}, there are six types of divergence maximizers for $\M_{\Gamma_1}$. If we denote the points in $\Delta_{5}$ in which
$\M_{\Gamma_1}$ is contained by $v = (v_{11}, v_{12}, v_{13}; v_{21}, v_{22}, v_{23})$ these maximizers are
\begin{align*}
    (\frac{1}{2}, 0,0;0, \frac{r}{2}, \frac{1-r}{2}) & \quad  \quad
    (0, \frac{r}{2}, \frac{1-r}{2}; \frac{1}{2}, 0,0) \\
    (0,\frac{1}{2},0;\frac{r}{2}, 0, \frac{1-r}{2}) & \quad \quad
    (\frac{r}{2}, 0, \frac{1-r}{2}; 0, \frac{1}{2},0) \\
    (0,0, \frac{1}{2};\frac{r}{2},\frac{1-r}{2},0) &  \quad \quad
    (\frac{r}{2},\frac{1-r}{2},0;  0, 0, \frac{1}{2}) 
\end{align*}
where $0 < r < 1$. According to Theorem \ref{thm:multi-information-max}, there are six divergence maximizers 
of $\M_{\Gamma_2} \subset \Delta_8$. If we denote 
the points in $\Delta_8$ by $w = (w_{jk} \, : \, j,k = 1,2,3)$
these maximizers are $w^\sigma$ for each $\sigma \in S_3$ given
by 
\begin{align*}
     &  w_{11}^{id} = w_{22}^{id} = w_{33}^{id}  = \frac{1} {3}   
    & w_{12}^{(1 \, 2)} = w_{21}^{(1 \, 2)} = w_{33}^{(1 \, 2)}  = \frac{1}{3} \\
     &  w_{13}^{(1 \, 3)} = w_{22}^{(1 \, 3)} = w_{31}^{(1 \, 3)}  = \frac{1}{3}   & w_{11}^{(2 \, 3)} = w_{23}^{(2 \, 3)} = w_{32}^{(2 \, 3)} = \frac{1}{3} \\
    & w_{12}^{(1 \, 2 \, 3)} = w_{23}^{(1 \, 2 \, 3)} = w_{31}^{(1 \, 2 \, 3)}  = \frac{1}{3}   & w_{13}^{(1 \, 3 \, 2)} = w_{21}^{(1 \, 3 \, 2)} = w_{32}^{(1 \, 3 \, 2)} = \frac{1}{3}
\end{align*}   
Now we see that $\pi_1'(v) = (v_{+1}, v_{+2}, v_{+3})$
and $\pi_2'(w^\sigma) = (w_{1+}^\sigma, w_{2+}^\sigma, w_{3+}^\sigma) = (\frac{1}{3}, \frac{1}{3}, \frac{1}{3})$
are not equal to each other for any choice of the 
maximizer $v$ of $\M_{\Gamma_1}$ and $w^\sigma$ of $\M_{\Gamma_2}$. This means that $Q_v^{\Gamma_1}$ and 
$Q_{w^\sigma}^{\Gamma_2}$ are not compatible. In other words, it is impossible to apply Corollary \ref{cor:bound-reducible}.  Indeed, the bound cannot
be attained as it was explicitly shown in \cite[Example 20]{Rauh11}. The maximum divergence is equal to $\log(3+2\sqrt{2}) < \log 6 = \log 2 + \log 3$. Up to symmetry there is a unique global 
maximizer given by
$$p_{111} = \sqrt{2}-1, \,\,\, p_{222} = p_{233} = 1 - \frac{\sqrt{2}}{2}.$$
We believe that for reducible models induced by $\Gamma = (\Gamma_1, S, \Gamma_2)$ finding {\it compatible} logarithmic
Voronoi polytopes $Q_v^{\Gamma_1}$ and $Q_w^{\Gamma_2}$
that will give meaningful bounds for $D(\M_{\Gamma})$ is worth
exploring.
\end{example}

We close our discussion of reducible hierarchical log-linear models with a result involving conditional independence. 
For this we consider three random variables $X_1$, $X_2$, and $X_3$ with state spaces $[d_1], [d_2]$, and $[d_3]$, respectively. We let $\ddb = (d_1, d_2, d_3)$. The simplicial complex we will use is $\Gamma=[12][23]$
with the decomposition $([12], \{2\}, [23])$. The toric model 
$\M_{\Gamma, \ddb}$ consists of the joint probability distributions where $ X_1 \ci X_3 \, | \, X_2$. 

\begin{prop} \label{prop:conditional-indp}
  Let $\Gamma$ be the reducible simplicial complex on $[3]$ with decomposition $([12],\{2\}, [23]) $ and $\ddb = (d_1, d_2, d_3) \in \NN^3$. Then $D(\M_{\Gamma, \ddb}) = \min (\log d_1, \log d_3)$. 
\end{prop}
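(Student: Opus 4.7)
The plan is to prove $D(\M_{\Gamma, \ddb}) = \min(\log d_1, \log d_3)$ by sandwiching it with a matching upper and lower bound.

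For the upper bound, I would invoke Proposition \ref{prop:divergence-bound-2} directly. The facets of $\Gamma = [12][23]$ are $\{1,2\}$ and $\{2,3\}$; the vertex outside the first facet is $3$, and the vertex outside the second is $1$. Hence
\[
D(\M_{\Gamma, \ddb}) \;\leq\; \min\bigl\{\log d_3,\ \log d_1\bigr\} \;=\; \min(\log d_1, \log d_3).
\]
This half requires no additional work beyond quoting the proposition.

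For the lower bound, I would exhibit a point $p \in \Delta_{\Gamma, \ddb}$ achieving the bound. Assume without loss of generality $d_1 \leq d_3$ (the other case is symmetric by swapping the roles of $X_1$ and $X_3$). Fix any injection $\sigma:[d_1]\hookrightarrow[d_3]$ and define $p$ to be supported on the ``diagonal'' slice $\{(i,1,\sigma(i)) : i \in [d_1]\}$ with uniform mass $1/d_1$. The key calculation is to compute the MLE $q \in \M_{\Gamma, \ddb}$ of $p$ using the conditional-independence factorization formula (see \cite[Prop.~4.1.4]{Lau96}), namely $q_{ijk} = p_{ij+}\, p_{+jk}/p_{+j+}$. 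For our $p$, the only nonzero marginals are $p_{i,1,+}=1/d_1$ (for $i\in[d_1]$), $p_{+,1,k}=1/d_1$ (for $k\in\sigma([d_1])$), and $p_{+,1,+}=1$, which yields $q_{i,1,k}=1/d_1^2$ on the rectangle $[d_1]\times\{1\}\times\sigma([d_1])$ and zero outside. Evaluating the KL divergence on the support of $p$ then gives
\[
D(p \parallel q) \;=\; \sum_{i=1}^{d_1} \tfrac{1}{d_1}\log\!\left(\frac{1/d_1}{1/d_1^{\,2}}\right) \;=\; \log d_1.
\]
Since $D_{\M_{\Gamma, \ddb}}(p) = D(p \parallel q)$ by Birch's theorem, this exhibits a point with divergence $\log d_1 = \min(\log d_1, \log d_3)$, completing the proof.

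I do not anticipate a genuine obstacle: the upper bound is a direct citation, and the lower-bound construction is explicit. The only mildly delicate point is verifying that the ``diagonal'' $p$ really has the MLE $q$ I claim — one must check $q$ lies in $\M_{\Gamma, \ddb}$ and that $p$ and $q$ share the same $\{1,2\}$- and $\{2,3\}$-marginals, both of which follow immediately from the definition and the fact that $\sigma$ is injective (so the support of $p$ projects bijectively onto its $\{1,2\}$- and $\{2,3\}$-shadows). Note that the naive approach via Corollary \ref{cor:bound-reducible} is uninformative here, since both component models $\M_{[12], (d_1,d_2)}$ and $\M_{[23], (d_2,d_3)}$ are saturated and have zero divergence; the construction above is what actually makes the conditional-independence constraint bite.
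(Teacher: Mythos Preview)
Your proof is correct but takes a genuinely different route from the paper's. Both arguments use Proposition~\ref{prop:divergence-bound-2} for the upper bound. For the lower bound, you construct an explicit witness point $p$ concentrated on a single value of $j$ and compute its MLE and divergence by hand. The paper instead observes that, after a suitable ordering of rows and columns, the matrix $A_{\Gamma,\ddb}$ is block diagonal with $d_2$ identical blocks, each equal to the matrix $A_{d_1,d_3}$ of the independence model of a $d_1$-ary and a $d_3$-ary variable; it then applies Theorem~\ref{thm:block-matrix} together with the known value $D(\M_{A_{d_1,d_3}})=\min(\log d_1,\log d_3)$ from Corollary~\ref{cor:independence-bound}. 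Your approach is more self-contained and elementary, needing no auxiliary results beyond the closed-form MLE for a decomposable model; the paper's approach is more structural and explains \emph{why} the answer coincides with that of a two-variable independence model --- the conditional-independence model is, up to relabelling, $d_2$ disjoint copies of one. One small point worth flagging: your chosen $p$ has $p_{+j+}=0$ for $j\neq 1$, so the MLE $q$ lies on the boundary of $\M_{\Gamma,\ddb}$ and the formula $p_{ij+}p_{+jk}/p_{+j+}$ must be read with the convention $0/0=0$; this is harmless since $\supp(p)\subseteq\supp(q)$ and the paper works with the extended family throughout, but you should state it explicitly.
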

\begin{proof}
Proposition \ref{prop:divergence-bound-2} implies
that $D(\M_{\Gamma, \ddb}) \leq \min (\log d_1,  \log d_3)$.
The $0/1$ matrix $A_{\Gamma, \ddb}$ defining the model can be organized as follows. Recall that this model is defined
by the parametrization $p_{ijk} = a_{ij}b_{jk}$ with $i \in [d_1], j \in [d_2]$, and $k \in [d_3]$. We order the indices
$(i,j,k)$ lexicographically as follows: $(i,j,k) < (i', j', k')$ if $j<j'$, or if $j=j'$ and $i<i'$, or if $j=j'$ and $i=i'$ and $k<k'$. We sort the columns of $A_{\Gamma, \ddb}$
with respect to this ordering. We will also sort the rows 
of the matrix into $d_2$ blocks where in block $j$ we list
first the rows corresponding to the parameters $a_{ij}$ with $i=1, \ldots, d_1$ and 
then the parameters $b_{jk}$  with $k=1, \ldots, d_3$.
Then 
$$A_{\Gamma, \ddb} = \left( \begin{array}{cccc} A_{d_1,d_3} & 0 & \cdots & 0\\ 0 & A_{d_1,d_3} & \cdots & 0 \\
 \vdots & \vdots & \ddots & \vdots \\
 0 & 0 & \cdots & A_{d_1,d_3} \end{array} \right)$$
 where $A_{d_1,d_3}$ is the matrix defining an independence
 model of two random variables with state spaces $[d_1]$ and $[d_3]$. Since the maximum divergence from such a model is 
 $\min (\log d_1, \log d_3)$, and each block gives the same maximum divergence, Theorem \ref{thm:block-matrix} 
 implies the result.
\end{proof}

\section{Models of ML degree one}\label{sec:ml-deg-one} 
\begin{definition}\label{def:ml-degree}
The maximum likelihood degree (\textit{ML degree}) of a toric model $\M_A \subset \Delta_{n-1}$ is the number of points in $\CC^n$ that are in the intersection of the toric variety $X_A$ and the affine span of the logarithmic Voronoi polytope $Q_b$, where $b=Au$ for a generic $u\in\Delta_{n-1}$. 
\end{definition}
The definition above is equivalent to the standard definition in \cite[p. 140]{AlgStatBook}. Moreover, a model has ML degree one if and only if the maximum likelihood estimate of any data $u\in\Delta_{n-1}$ can be expressed as a rational function of the coordinates of $u$ \cite{HuhSturmfels2014LikelihoodGeometry}. We study the maximum divergence to such models in this section. Two dimensional 
models (toric surfaces) of ML degree one were classified in \cite{Davies_2022} along with some families of three dimensional models. We treat these families and the generalizations 
of some of them.

\subsection{Multinomial distributions}
We start by considering $m$ independent identically distributed $(d+1)$-ary random variables $X_1,\ldots, X_m$ with state spaces $\{0,\ldots,d\}$. Let $s_j$ be the probability of state $j$, and let $p_{i_0\ldots i_{d}}$ denote the probability of observing exactly $i_j$ occurrences of state $j$ for each $j\in\{0,1,\ldots,d\}$. Thus, $p_{i_0\ldots i_{d}}=\binom{m}{i_0,\ldots ,i_d}s_0^{i_0}\ldots s_d^{i_d}$ and we have the $d$-dimensional toric model parametrized as
$$\varphi:\Delta_{d}\to\Delta_{n-1}:(s_0,\ldots,s_d)\mapsto (p_{i_0 \ldots i_{d}}: \Sigma {i_j} = m)$$ where $n=\binom{m+d}{d}$. We refer to the the Zariski closure of the image of $\varphi$ as the \textit{twisted Veronese model} and denote it by $\V_{d,m}$. The columns of the matrix $A$ corresponding to the parametrization are the nonnegative integer solutions to $i_0+\cdots+i_d=m$. Note that $A$ has the constant vector $n\mathds{1}$ in its rowspan. Moreover, it has the same rowspan as the matrix $A'$ whose columns are of the form $(1,v)$ where $v\in\RR^{d}$ is a nonnegative integer solution to the inequality $i_1+\cdots+i_d\leq m$. Thus, geometrically, the model is given by all lattice points in the convex hull of $\{0,me_1,\ldots,me_d\}\subseteq \RR^{d}$, a $d$-dimensional simplex dilated by a factor of $m$. Hence $\V_{d,m}$ is isomorphic to the Veronese variety, except the weights are modified so that $\V_{d,m}$ has ML degree one. That is, if we were to change all multinomial coefficients in the definition of $p_{i_0 \ldots i_{d}}$ to 1, we would recover the usual Veronese variety. 

Since the ML degree of $\V_{d,m}$ is one, the MLE can be expressed as a rational function of the data. Fix $u\in \Delta_{n-1}$ and suppose $u$ is in the logarithmic Voronoi polytope $Q_p$ at some unknown $p=(p_1,\ldots, p_n)\in\V_{d,m}$. Let $Ap=b$ where $b=(b_0,\ldots,b_d)$ is the point corresponding to $p$ in  $\conv(A)$. Then each coordinate of the MLE of $u$ can be expressed as 
\begin{align}\label{eq:mle-twisted-veronese}
    q_{i_0,\ldots,i_d}=\binom{m}{i_0,\ldots ,i_d} \left(\frac{b_0}{m}\right)^{i_0}\cdots \left(\frac{b_d}{m}\right)^{i_d}.
\end{align}

\begin{example}[$d=2,m=3$]
Consider the twisted Veronese variety $\V_{2,3}$. As discussed above, the defining matrices $A$ and $A'$ can be written as either
$$A=\begin{pmatrix}
3&2&2&1&1&1&0&0&0&0\\
0&1&0&2&1&0&3&2&1&0\\
0&0&1&0&1&2&0&1&2&3
\end{pmatrix} \text{ or }
A'=\begin{pmatrix}
1&1&1&1&1&1&1&1&1&1\\
0&1&0&2&1&0&3&2&1&0\\
0&0&1&0&1&2&0&1&2&3
\end{pmatrix}.$$ In our computations, we will usually use the matrix $A$. The polytope associated to $\V_{2,3}$ is plotted in Figure \ref{fig:twisted-veronese} on the left. 

    \begin{figure}[H]
        \centering
\begin{tikzpicture}[scale=1]
\draw[gray, thick] (0,0) -- (0,3);
\draw[gray, thick] (0,0) -- (3,0);
\draw[gray, thick] (0,3) -- (3,0);

\draw[cyan, thin] (0,0) -- (3/2,3/2);
\draw[cyan, thin] (3,0) -- (0,3/2);
\draw[cyan, thin] (3/2,0) -- (0,3);

\filldraw[black] (0,0) circle (1pt);
\filldraw[black] (0,1) circle (1pt);
\filldraw[black] (0,2) circle (1pt);
\filldraw[black] (0,3) circle (1pt);
\filldraw[black] (1,0) circle (1pt);
\filldraw[black] (2,0) circle (1pt);
\filldraw[black] (3,0) circle (1pt);
\filldraw[red] (1,1) circle (1.5pt) node[anchor=south] {\scriptsize{$b$}};
\filldraw[black] (1,2) circle (1pt);
\filldraw[black] (2,1) circle (1pt);

\end{tikzpicture} \;\;\;\;\;\; \begin{tikzpicture}[scale=1.1]
\draw[gray, thick] (0,0,0) -- (0,0,2);
\draw[gray, thick] (0,0,0) -- (2,0,0);
\draw[gray, thick] (0,0,0) -- (0,2,0);
\draw[gray, thick] (2,0,0) -- (0,2,0);
\draw[gray, thick] (0,0,2) -- (0,2,0);
\draw[gray, thick] (0,0,2) -- (2,0,0);

\filldraw[black] (0,0,0) circle (1pt);
\filldraw[black] (1,0,0) circle (1pt);
\filldraw[black] (2,0,0) circle (1pt);
\filldraw[black] (0,1,0) circle (1pt);
\filldraw[black] (0,2,0) circle (1pt);
\filldraw[black] (0,0,1) circle (1pt);
\filldraw[black] (0,0,2) circle (1pt);
\filldraw[black] (1,1,0) circle (1pt);
\filldraw[black] (1,0,1) circle (1pt);
\filldraw[black] (0,1,1) circle (1pt);

\draw[cyan, thin] (0,0,0) -- (2/3,2/3,2/3);
\draw[cyan, thin] (2,0,0) -- (0,2/3,2/3);
\draw[cyan, thin] (0,2,0) -- (2/3,0,2/3);
\draw[cyan, thin] (0,0,2) -- (2/3,2/3,0);

\filldraw[red] (1/2,1/2,1/2) circle (1.5pt) node[anchor=south] {\scriptsize{$b$}};
\filldraw[cyan] (0,2/3,2/3) circle (0.5pt);
\filldraw[cyan] (2/3,0,2/3) circle (0.5pt);
\filldraw[cyan] (2/3,2/3,0) circle (0.5pt);

\end{tikzpicture}  

        \caption{Twisted Veronese models $\V_{2,3}$ and $\V_{3,2}$, respectvely.}
        \label{fig:twisted-veronese}
    \end{figure}

The maximum divergence from $\V_{2,3}$ is $2\log 3$, achieved at the unique point $v\in\Delta_9$, uniformly supported on $3e_1,3e_2,$ and $3e_3$, i.e. $v_{300}=v_{030}=v_{003}=1/3$ and all other coordinates of $v$ are 0. Note that $v$ is a vertex of the logarithmic Voronoi polytope  $Q_b$ corresponding to the centroid $b$ of $\conv(A)$, i.e. $Aq=b=(1,1,1)$. The point $q$ can be computed using (\ref{eq:mle-twisted-veronese}), so $q=(1/3)^3(1,3,3,3,6,3,1,3,3,1)$. The maximum divergence is 
\begin{align*}
D(v||q)&=v_{300}\log(v_{300}/q_{300})+v_{030}\log(v_{030}/q_{030})+v_{003}\log(v_{003}/q_{003})\\
&=1/3\log(3^3/3)+1/3\log(3^3/3)+1/3\log(3^3/3)=2\log3.
\end{align*}
\end{example}

\begin{example}[$d=3,m=2$]For the twisted Veronese model $\V_{3,2}$, the maximum divergence is $\log 4$. It is achieved at 10 different vertices of the logarithmic Voronoi polytope at the point $q=1/16 (1, 2, 1, 2,2, 1, 2,2,2, 1)$. Note that $Aq=b=(1/2,1/2,1/2,1/2)$, which is again the centroid of $\conv(A)$. One of such vertices is $v=(1/4, 0, 1/4, 0, 0, 0, 0, 0, 1/2, 0)$, so the divergence is $D(v||q)=1/4\log 4+1/4\log 4+1/2\log 4=\log 4$. The polytope $\conv(A)$ for this model is shown in Figure \ref{fig:twisted-veronese} on the right. Each of the 10 maximizers arises from one of the 10 permutations in $S_4$ of order at most two. This will follow from the proof of Theorem \ref{thm:twisted-veronese-max-divergence} in the next section.
\end{example}

The formula for the maximum divergence from a general model $\V_{d,m}$ as well as the full description of maximizers were given in  \cite{juricek}. We summarize these results in the theorem below. A more detailed discussion about the maximizers will be presented in the next section.

\begin{theorem}\cite[Theorem 1.1]{juricek}\label{thm:twisted-veronese-max-divergence}
   The maximum divergence to $\V_{d,m}$ equals $(m-1)\log (d+1)$. It is achieved at some vertices of the unique logarithmic Voronoi polytope $Q_b$ where $b=\left(\frac{m}{d+1},\ldots,\frac{m}{d+1}\right)$. There is a unique vertex 
   of this polytope maximizing divergence if and only if $m>2$.
\end{theorem}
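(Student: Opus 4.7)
The plan is to reduce the problem to the bound on the multi-information of the independence model of $m$ $(d+1)$-ary random variables recorded in Theorem \ref{thm:multi-information-max}, by lifting distributions on compositions to exchangeable distributions on tuples.

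First I would define the lift $\Phi\colon\Delta_{n-1}\to \Delta_{(d+1)^m-1}$ by
$$\tilde p_{j_1\ldots j_m}=\frac{p_{\pi(j_1,\ldots,j_m)}}{\binom{m}{i_0,\ldots,i_d}},$$
where $\pi(j_1,\ldots,j_m)=(i_0,\ldots,i_d)$ with $i_k=\#\{\ell:j_\ell=k\}$. The image of $\Phi$ is precisely the sub-simplex of $S_m$-exchangeable distributions, and the MLE formula (\ref{eq:mle-twisted-veronese}) shows that every $q\in\V_{d,m}$ with $Aq=b$ lifts to the product distribution $\tilde q=s^{\otimes m}$ with $s=b/m$. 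A routine computation, using $p_i=\binom{m}{i}\tilde p_{\tilde j}$ and the analogous identity for $q_i$ whenever $\pi(\tilde j)=i$, gives $D(p\parallel q)=D(\tilde p\parallel \tilde q)$. Moreover, every coordinate marginal of $\tilde p$ equals $s$, since $m\,\mu_k=\sum_{\tilde j}\tilde p_{\tilde j}\cdot i_k(\pi(\tilde j))=(Ap)_k=b_k$. Hence by Birch's theorem applied to the full independence model $\M_{\mathrm{ind}}$ of $m$ $(d+1)$-ary variables, $\tilde q$ is the MLE of $\tilde p$ in $\M_{\mathrm{ind}}$, so $D(p\parallel q)$ equals the multi-information of $\tilde p$. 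The bound recorded in Theorem \ref{thm:multi-information-max} then yields $D(p\parallel q)\leq (m-1)\log(d+1)$.

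For the equality case, Theorem \ref{thm:multi-information-max} asserts that each multi-information maximizer has the form $\tilde p=\frac{1}{d+1}\sum_{j=0}^{d}\delta_{(j,\sigma_2(j),\ldots,\sigma_m(j))}$ for some $\sigma_2,\ldots,\sigma_m\in S_{d+1}$ (with $\sigma_1=\mathrm{id}$). Imposing $S_m$-exchangeability forces the support of $\tilde p$ to be closed under coordinate permutations, which translates into the system $\sigma_\ell\circ\sigma_{\tau(1)}=\sigma_{\tau(\ell)}$ for every $\tau\in S_m$. Specializing to the transposition $\tau=(1\,\ell)$ yields both $\sigma_\ell^2=\mathrm{id}$ and $\sigma_k\circ\sigma_\ell=\sigma_k$ for all $k\notin\{1,\ell\}$. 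For $m\geq 3$ the second equation forces every $\sigma_\ell=\mathrm{id}$, so the unique exchangeable maximizer is supported on the diagonal of $[d+1]^m$ and pushes forward to the unique $p=\frac{1}{d+1}\sum_{j=0}^{d}\delta_{m e_j}$. For $m=2$ only $\sigma_2^2=\mathrm{id}$ remains, so each involution of $S_{d+1}$ contributes a distinct maximizer. In every case the common marginal is $s=\mathds{1}/(d+1)$, hence $b=Ap=\tfrac{m}{d+1}\mathds{1}$, so all maximizers lie in the single logarithmic Voronoi polytope $Q_b$; by Proposition \ref{prop:vertex-does-it} they are vertices of $Q_b$.

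The main obstacle will be the exchangeability analysis: one must solve the rigid compatibility equations $\sigma_\ell\circ\sigma_{\tau(1)}=\sigma_{\tau(\ell)}$ for all $\tau\in S_m$ and verify that the solution set collapses to a single element for $m\geq 3$ while admitting all involutions of $S_{d+1}$ for $m=2$. The remaining ingredients — the compatibility $D(p\parallel q)=D_{\M_{\mathrm{ind}}}(\tilde p)$, the identification of marginals with $s$, and the determination $b=\tfrac{m}{d+1}\mathds{1}$ — are direct computations.
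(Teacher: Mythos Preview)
Your proposal is correct and follows the same overall strategy as the paper: lift to the independence model of $m$ $(d+1)$-ary variables via exchangeability, invoke Theorem~\ref{thm:multi-information-max} for the value $(m-1)\log(d+1)$, and then determine which of the Ay--Knauf maximizers are $S_m$-exchangeable. The paper carries out the reduction by quoting the abstract machinery of Theorem~\ref{thm:juricek-general} on $G$-symmetrical families, whereas you establish the identity $D(p\parallel q)=D_{\M_{\mathrm{ind}}}(\tilde p)$ directly by checking $D(p\parallel q)=D(\tilde p\parallel\tilde q)$ and that $\tilde q=s^{\otimes m}$ is the MLE of $\tilde p$ via Birch's theorem; this is slightly more hands-on but self-contained. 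Your algebraic treatment of the exchangeability constraints, deriving $\sigma_\ell\circ\sigma_{\tau(1)}=\sigma_{\tau(\ell)}$ and specializing to transpositions, is a clean reformulation of the paper's case-by-case argument (which swaps the first two coordinates and derives a contradiction to injectivity of $\sigma_k$ for $k\geq 3$) and reaches the same conclusions.
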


\subsection{Box model}
In this section we consider a generalization of the twisted Veronese model. Suppose we have $k>1$ groups of random variables, with $a_i$ independent identically distributed $(d+1)$-ary random variables with state space $\{0,\ldots,d\}$ in the $i$th group for $i\in[k]$. Let $s_{i\ell}$ be the probability of state $\ell$ in the group $i$ and let $p_{(j_{10}\ldots j_{1d}),\ldots,(j_{k0}\ldots j_{kd})}$ denote the probability of observing exactly $j_{i\ell}$ occurrences of state $\ell$ in the group $i$. Hence, $p_{(j_{10}\ldots j_{1d}),\ldots,(j_{k0}\ldots j_{kd})}=\prod_{i=1}^k\binom{a_i}{j_{i0}\ldots j_{id}}s_{i0}^{j_{i_0}}\ldots s_{id}^{j_{id}}$ and we have a $kd$-dimensional toric model parametrized as
$$\Delta_{d}\times \ldots \times \Delta_{d}\to \Delta_{n-1}: ((s_{10},\ldots, s_{1d}),\ldots,(s_{k0},\ldots, s_{kd}))\mapsto (p_{(j_{10}\ldots j_{1d}),\ldots,(j_{k0}\ldots j_{kd})}: 0 \leq j_{i\ell}\leq a_i),$$
where $n=\prod_{i=1}^k\binom{a_i+d}{d}$. The columns of the corresponding matrix $A$ are naturally identified with the nonnegative integer solutions to the linear system $j_{i0}+\ldots+{j_{id}}=a_i$ for $i\in[k]$. We will refer to this model as the \textit{box model} motivated by the shape of 
$\conv(A)$ when $d=1$. We denote these models by $\B^{(d)}_{a_1,\ldots,a_k}$. The special case when $d=1, k=2$ was studied in \cite{Davies_2022}. The box model also has ML degree one, and hence the MLE can be written as a rational function of data. For $u\in\Delta_{n-1}$ such that $Au=b=((b_{10},\ldots, b_{1a_1}),\ldots, (b_{k0},\ldots, b_{ka_k}))$, the MLE is 
$$q_{(j_{10}\ldots j_{1d}),\ldots,(j_{k0}\ldots j_{kd})}=\prod_{i=1}^k \binom{a_i}{j_{i0},\ldots,j_{id}}\left(\frac{b_{i0}}{a_i}\right)^{j_{i0}}\ldots \left(\frac{b_{id}}{a_i}\right)^{j_{id}}. $$
\begin{theorem}\label{thm:box-max-divergence}
    The maximum divergence to the model $\B^{(d)}_{a_1,\ldots,a_k}$ equals $(a_1+\ldots+a_k-1)\log (d+1)$. It is achieved at $[(d+1)!]^{k-1}$ vertices of the unique logarithmic Voronoi polytope $Q_b$ such that $b=((\frac{a_1}{d+1},\ldots,\frac{a_1}{d+1}),\ldots,(\frac{a_k}{d+1},\ldots,\frac{a_k}{d+1}))$.
\end{theorem}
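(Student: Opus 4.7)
The plan is to realize the box model $\B^{(d)}_{a_1,\ldots,a_k}$ as the $G$-invariant part of the full independence model $\mathcal{N}$ of $m := a_1 + \cdots + a_k$ independent $(d+1)$-ary random variables, where $G := S_{a_1} \times \cdots \times S_{a_k}$ acts by permuting variables within each group, and then apply Theorem \ref{thm:multi-information-max}. Let $\pi \colon \Delta_{(d+1)^m - 1} \to \Delta_{n-1}$ be the ``count within each group'' pushforward. A product distribution $\prod_{r=1}^m t_r(x_r) \in \mathcal{N}$ is $G$-invariant if and only if each $t_r$ depends only on the group containing $r$, which is precisely the lift of a box-model distribution under $\pi$; thus $\pi$ restricts to a bijection between $\mathcal{N}^G$ and $\B^{(d)}_{a_1,\ldots,a_k}$.

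I would next verify that $\pi$ preserves both divergences and MLEs. By uniqueness of the MLE together with the $G$-invariance of $\mathcal{N}$ as a set, the MLE of any $G$-invariant distribution in $\mathcal{N}$ is itself $G$-invariant, and a direct calculation pooling $m$-tuples with the same count profile (whose $G$-orbit has size $\prod_i \binom{a_i}{j_{i0},\ldots,j_{id}}$) gives $D(p \| q) = D(\pi(p) \| \pi(q))$ for $G$-invariant $p, q$. Hence $D_\B(\pi(p)) = D_\mathcal{N}(p)$, and taking suprema yields
\[ D(\B^{(d)}_{a_1,\ldots,a_k}) \;=\; \sup_{p \in \mathcal{N}^G} D_\mathcal{N}(p) \;\leq\; D(\mathcal{N}) \;=\; (m-1)\log(d+1) \]
using Theorem \ref{thm:multi-information-max}. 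That theorem also classifies the maximizers of $D_\mathcal{N}$ as $v^\sigma := \tfrac{1}{d+1}\sum_{j=0}^{d} \delta_{(j,\, \sigma_2(j),\, \ldots,\, \sigma_m(j))}$ with $\sigma_r \in S_{d+1}$. A maximizer $v^\sigma$ is $G$-invariant if and only if $\sigma_r$ depends only on the group of $r$; this forces $\sigma_r = \mathrm{id}$ on group 1 (which contains index 1) and lets each remaining group $i = 2, \ldots, k$ carry an arbitrary $\tau_i \in S_{d+1}$, producing exactly $[(d+1)!]^{k-1}$ invariant maximizers, each descending to a point of $\B$ attaining the upper bound.

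It remains to verify the geometric content. For each $G$-invariant $v^\sigma$, the image $\pi(v^\sigma)$ places mass $1/(d+1)$ on the $d+1$ box states whose group-$i$ count vector is $a_i e_{\tau_i(j)}$ for $j = 0, \ldots, d$. A direct computation gives $A \pi(v^\sigma) = b$ with $b_{(i,\ell)} = a_i/(d+1)$ for all $(i,\ell)$, independent of $\sigma$, so all $[(d+1)!]^{k-1}$ maximizers lie in the single logarithmic Voronoi polytope $Q_b$. Since each $\tau_i$ is a bijection, the $d+1$ columns of $A$ indexed by the support of $\pi(v^\sigma)$ are linearly independent, making $\pi(v^\sigma)$ a basic feasible solution and hence a vertex of $Q_b$. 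Distinct $(\tau_2, \ldots, \tau_k)$ produce distinct vertices because $\tau_1 = \mathrm{id}$ lets the group-$1$ marginal determine the index $j$, after which the other $\tau_i(j)$ are read off. The main obstacle I anticipate is the careful bookkeeping of multinomial weights required to verify the divergence-preservation property of $\pi$ rigorously.
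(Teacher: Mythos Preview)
Your proposal is correct and follows essentially the same route as the paper: both realize $\B^{(d)}_{a_1,\ldots,a_k}$ as the quotient of the full independence model on $m=a_1+\cdots+a_k$ variables by $G=S_{a_1}\times\cdots\times S_{a_k}$, invoke Theorem~\ref{thm:multi-information-max}, and then classify the $G$-invariant maximizers (the paper packages the transfer step by citing Theorem~\ref{thm:juricek-general}, whereas you verify divergence and MLE preservation of $\pi$ directly, which amounts to the same thing). One notational slip to fix: in your displayed inequality the supremum should range over the $G$-invariant points of the ambient simplex $\Delta_{(d+1)^m-1}$, not over $\mathcal{N}^G\subset\mathcal{N}$, where $D_{\mathcal{N}}$ would vanish identically.
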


Our proof of Theorem \ref{thm:box-max-divergence} relies heavily on the methods used to prove Theorem \ref{thm:twisted-veronese-max-divergence} in \cite{juricek}. Before we present both proofs, we outline the general theory below.

Let $\F$ be a model inside the simplex $\Delta_{N-1}$. Let $S_N$ be the symmetric group of all permutations on $[N]$. This group acts on $\Delta_{N-1}$ by permuting the coordinates of the points in the simplex. Let $G$ be a subgroup of $S_N$. 
\begin{definition}
    The model $\F$ is said to be \textit{$G$-symmetrical} if for all $\sigma\in G$ and all $p\in \F$, we have $\sigma p\in \F$. A point $p\in\Delta_{N-1}$ is said to be \textit{$G$-exchangeable} if for all $\sigma\in G$, we have $\sigma p=p$. 
\end{definition}
Let $\F$ be a $G$-symmetrical model and let $\F/G$ denote the set of all orbits of $\F$ under the action of $G$. Let
$$\gamma_G:\F\to\F/G: p\mapsto \{\sigma p:\sigma\in G\}$$
be the map that sends an element in $\F$ to its orbit. Denote by $\E$ the closure of all $G$-exchangeable distributions in $\Delta_{N-1}$ and let $\M=\F\cap \E$ denote the induced model of all exchangeable distributions in $\F$. The following theorem holds in general.
\begin{theorem}\cite[Corollary 2.6]{juricek}\label{thm:juricek-general}
    Let $G$ be a subgroup of $S_N$ and let $\F\subseteq \Delta_{N-1}$ be a $G$-symmetrical family of distributions. If there exists a maximizer of $D_\F$ that is exchangeable, then $D(\gamma_G(\M))=D(\F)$ and all maximizers of $D_{\gamma_G(\M)}$ are of the form $\gamma_G(v)$ where $v$ is an exchangeable maximizer of $D_{\F}$.
    
\end{theorem}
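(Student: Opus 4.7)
The plan is to pass between the simplex $\Delta_{N-1}$ and its quotient by $G$ using the orbit-sum map $\gamma_G$, and to exploit the fact that divergence is preserved by $\gamma_G$ precisely on exchangeable pairs. The key technical lemma I would establish is a version of the log-sum inequality adapted to $\gamma_G$: for any $u,m\in\Delta_{N-1}$,
\[
D(u\parallel m)\;\geq\;D(\gamma_G(u)\parallel \gamma_G(m)),
\]
with equality exactly when each ratio $u_i/m_i$ is constant on the $G$-orbits of $[N]$. In particular, equality holds whenever both $u$ and $m$ are $G$-exchangeable, since then both are constant on each orbit. A companion observation is that $\gamma_G$ restricts to a bijection $\M\to\gamma_G(\M)$, inverted by spreading each orbit's probability mass uniformly.

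The central step is a symmetrization claim for I-projections: for any exchangeable $u$, every I-projection of $u$ in $\F$ admits a $G$-exchangeable representative, which then lies in $\M$. Indeed, the $G$-symmetry of $\F$ and exchangeability of $u$ give $D(u\parallel \sigma m^\ast)=D(u\parallel m^\ast)$ for every $\sigma\in G$, so each translate $\sigma m^\ast$ of an I-projection $m^\ast$ is again an I-projection; the desired exchangeable projection is then produced either by uniqueness of the I-projection on $\F$ (as holds for exponential families, which is our setting) or by averaging $\tfrac{1}{|G|}\sum_\sigma \sigma m^\ast$ and invoking strict convexity of $D(u\parallel\cdot)$.

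Applying these two ingredients to the exchangeable maximizer $v$ supplied by the hypothesis, let $m^\ast\in\M$ be an exchangeable I-projection of $v$. The equality case of the log-sum lemma together with the bijection $\M\leftrightarrow\gamma_G(\M)$ yields
\[
D_{\gamma_G(\M)}(\gamma_G(v))\;=\;\min_{m\in\M} D(v\parallel m)\;=\;D(v\parallel m^\ast)\;=\;D_\F(v)\;=\;D(\F),
\]
where the penultimate equality uses that $m^\ast$ already minimizes over the larger set $\F\supseteq\M$. This shows $D(\gamma_G(\M))\geq D(\F)$ and realizes $\gamma_G(v)$ as a maximizer. For the reverse inequality I would take any $\bar u$ with exchangeable lift $u$ and run the symmetrization on its I-projection to obtain $m^\natural\in\M$, giving
\[
D_{\gamma_G(\M)}(\bar u)\;\leq\; D(u\parallel m^\natural)\;=\;D_\F(u)\;\leq\; D(\F),
\]
and then maximize over $\bar u$ to conclude $D(\gamma_G(\M))\leq D(\F)$, hence equality.

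Finally, if $\bar u$ is any maximizer of $D_{\gamma_G(\M)}$ and $u$ is its exchangeable lift, every inequality in the previous display must be an equality, forcing $D_\F(u)=D(\F)$. Therefore $u$ is an exchangeable maximizer of $D_\F$ and $\bar u=\gamma_G(u)$, which is precisely the asserted classification of maximizers. The main obstacle I anticipate is the symmetrization step: outside the exponential-family regime where the I-projection is unique, ensuring that the chosen I-projection can be taken exchangeable is delicate and seems to require either convexity of $\F$ or a more careful analysis along the lines of Matúš' optimality conditions cited earlier in the paper.
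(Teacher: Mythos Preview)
The paper does not supply a proof of this statement; it is quoted from Ju\v{r}\'{\i}\v{c}ek \cite[Corollary 2.6]{juricek} and invoked as a black box in the subsequent proofs of Theorems~\ref{thm:twisted-veronese-max-divergence} and~\ref{thm:box-max-divergence}. There is therefore no in-paper argument to compare against.

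Your proposal is nonetheless a correct proof in the setting the paper actually uses, where $\F$ is an independence model and hence an exponential family. The three ingredients --- the log-sum inequality $D(u\parallel m)\ge D(\gamma_G(u)\parallel\gamma_G(m))$ with equality on exchangeable pairs, the bijection $\M\leftrightarrow\gamma_G(\M)$ via uniform spreading over orbits, and the symmetrization of the I-projection --- fit together exactly as you describe, and both the equality $D(\gamma_G(\M))=D(\F)$ and the classification of maximizers follow. One small point: in your reverse-inequality display the step $D_{\gamma_G(\M)}(\bar u)\le D(u\parallel m^\natural)$ passes through $D(\bar u\parallel\gamma_G(m^\natural))$ and the equality case of log-sum, which you should make explicit.

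You are right that the symmetrization step is where the generality of the statement bites. For arbitrary $G$-symmetrical $\F$ the I-projection need not be unique, and your fallback of averaging $\tfrac{1}{|G|}\sum_\sigma\sigma m^\ast$ need not land in $\F$ unless $\F$ is convex; nor is $D(u\parallel\cdot)$ strictly convex in directions orthogonal to $\supp(u)$. In the exponential-family case Birch's theorem gives uniqueness of the I-projection, so none of this matters for the applications in Section~\ref{sec:ml-deg-one}, but a proof at the full generality in which the theorem is stated would have to confront it --- presumably the cited source does.
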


\begin{proof}[Proof of Theorem \ref{thm:twisted-veronese-max-divergence} \cite{juricek}]
Let $\F$ be the independence model of $m$ $(d+1)$-ary random variables induced by $\Delta_d\times\cdots\times\Delta_d$. Then $\F\subseteq\Delta_{N-1}$ where $N=(d+1)^m$ and it is given 
by the following parametrization
\small
\begin{align}\label{param-independence}
    \varphi:((x_{10},\ldots, x_{1d}),\ldots,(x_{m0},\ldots, x_{md})) \mapsto (p_{j_1, \ldots, j_m} = x_{1j_1}x_{2,j_2}\cdots x_{mj_m}: \quad j_1,\ldots,j_m \in \{0, \ldots, d\}).
\end{align}
Let $G$ be the  subgroup of $S_N$ given by  
\small
\begin{align*}
  G=&\{\sigma_{\rho}\in S_N: \rho\in S_m\text{ and }\\ 
  & \sigma_\rho ((x_{10},\ldots, x_{1d}),\ldots,(x_{m0},\ldots, x_{md}))=((x_{\rho(1)0},\ldots, x_{\rho(1)d}),\ldots,(x_{\rho(m)0},\ldots, x_{\rho(m)d}))\}.
\end{align*}
Note that $G\cong S_m$ and it acts on each coordinate of $p\in\F$ as $\sigma p_{i_1\ldots i_m}=p_{i_{\sigma (1)}\ldots  i_{\sigma (m)}}$. Under this action, note that $\F$ is $G$-symmetrical and that the set of all $G$-exchangeable distributions in $\F$ is $\M=\{\varphi(x,x,\ldots,x): x\in\Delta_{d}\}$. Then the twisted Veronese model $\V_{d,m}$ can be identified with the set of all orbits of $\F$ coming from exchangeable distributions, i.e. $\V_{d,m}=\gamma_{G}(\M)$.

Since $\F$ is an independence model, we know that $D(\F)=(m-1)\log(d+1)$ and all of its maximizers are given in Theorem \ref{thm:multi-information-max}. Denote each such maximizer by $v_{\sigma_2,\ldots,\sigma_m}:=\frac{1}{d+1}\sum_{j=0}^{d}\delta_{j,\sigma_2(j),\ldots,\sigma_m(j)}$. Note that $v=v_{\id,\ldots,\id}$ is the distribution in $\F$ such that $v_{jj\ldots j}=\frac{1}{d+1}$ for each $j\in\{0,\ldots,d\}$ and 0 otherwise. By  Theorem \ref{thm:juricek-general}, it then follows that $D(\V_{d,m})=D(\gamma_G(\M))=D(\F)=(m-1)\log(d+1)$, as desired. Moreover, $w=\gamma_G(v)$ is a maximizer of $D_{\V_{d,m}}$. Explicitly, it is given as $w_x=\frac{1}{d+1}$ if $x=me_j$ for $j\in\{0,\ldots,d\}$ and $0$ otherwise.

If $m>2$, we claim that $w$ is the unique maximizer. Indeed, let $v=v_{\sigma_2,\ldots,\sigma_m}$ be another exchangeable maximizer of $\F$. Without loss of generality, assume $\sigma_2\neq \id$, so there is some $j$ such that $\sigma_2(j)\neq j$. Since $(j,\sigma_2(j),j_3,\ldots,j_{m})\in\supp(v)$ for some $j_3,\ldots j_{m}\in\{0,\ldots,d\}$, it has to be the case that $(\sigma_2(j),j,j_3,\ldots,j_{m})\in\supp(v)$, since $v$ is exchangeable and has to have the same value in both coordinates. But since $m>2$, it then follows that $j_k=\sigma_k(j)=\sigma_k(\sigma_2(j))$, so $\sigma_k$ is not injective for every $k\geq 3$, a contradiction. 

If $m=2$, then let $\sigma=\sigma_2$ and note that if $\sigma^2\neq \id$, then $v_{j k}\in \supp(v)$, but $v_{j k}=0$ for some $j,k\in\{0,\ldots, d\}$, which would contradict exchangeability of $v$. Hence, every maximizer of $\V_{d,2}$ is of the form $w=\frac{1}{d+1}\sum_{j=0}^d\delta_{e_j}+\delta_{e_{\sigma(j)}}$ for some $\sigma\in S_{d+1}$ of order at most two. Every nonzero coordinate of $w$ is thus either $\frac{1}{d+1}$ or $\frac{2}{d+1}$. The number of maximizers in this case is the number of permutations in $S_{d+1}$ of order at most two. Note that for each of the maximizers, we have $Av=\left(\frac{m}{d+1},\ldots,\frac{m}{d+1}\right)$ and hence they all lie in the same logarithmic Voronoi polytope at $q\in\V_{d,m}$, corresponding to the centroid of $\conv(A)$.
\end{proof}

\begin{proof}[Proof of Theorem \ref{thm:box-max-divergence}]
Let $\F$ be the independence model of $a_1+\cdots+a_k$ $(d+1)$-ary random variables divided into $k$ groups, induced by $$\underbrace{(\Delta_d\times\ldots\times \Delta_d)}_{a_1}\times\cdots\times\underbrace{(\Delta_d\times\ldots\times \Delta_d)}_{a_k}.$$ Then $\F\subseteq\Delta_{N-1}$ where $N=(d+1)^{a_1+\ldots+a_k}$ and has the parametrization $\varphi$ like (\ref{param-independence}), except each probability $p_{\bullet}$ factors as a product of ${a_1+\ldots+ a_k}$ parameters. Let $G$ be a subgroup of $S_N$ defined as
\small
\begin{align*}
G&=\{\sigma_{\rho_1,\ldots,\rho_k}\in S_N: \rho_i\in S_{a_i} \text{ and }\\
&\sigma_{\rho_1,\ldots,\rho_k}((y^{(1)}_1,\ldots,y^{(1)}_{a_1}),\ldots,(y^{(k)}_1,\ldots,y^{(k)}_{a_k}))=((y^{(1)}_{\rho_1(1)},\ldots,y^{(1)}_{\rho_1(a_1)}),\ldots,(y^{(k)}_{\rho_k(1)},\ldots,y^{(k)}_{\rho_k(a_k)}))\},
\end{align*}
where $y^{(i)}_{j}=(x^{(i)}_{j1},\ldots, x^{(i)}_{jd})\in\Delta_d$. Note that $G\cong S_{a_1}\times\ldots\times S_{a_k}$.

Under this action, $\F$ is $G$-symmetrical and the set of all $G$-exchangeable distributions in $\F$ is $\M=\{\varphi((x^{(1)},\ldots,x^{(1)}),\ldots, (x^{(k)},\ldots,x^{(k)})): x^{(i)}\in\Delta_{d}\text{ for all }i\in[k]\}.$ The box model $\B^{(d)}_{a_1,\ldots,a_k}$ is then identified with the set of all orbits of $\F$ coming from exchangeable distributions, i.e. $\B^{(d)}_{a_1,\ldots,a_k}=\gamma_G(\M)$.

Since $\F$ is again an independence model, we know that $D(\F)=(a_1+\ldots+a_k-1)\log(d+1)$ from Theorem \ref{thm:multi-information-max}. Denote each maximizer by $v_{\sigma^{(1)}_2,\ldots, \sigma^{(1)}_{a_1},\ldots,\sigma^{(k)}_1,\ldots, \sigma^{(k)}_{a_1}}:=\frac{1}{d+1}\sum_{j=0}^{d}\delta_{j,\sigma^{(1)}_2(j),\ldots,\sigma^{(1)}_{a_1}(j),\ldots, \sigma^{(k)}_1(j),\ldots,\sigma^{(k)}_{a_k}(j)}$. First let $\sigma^{(1)}_{2}=\ldots=\sigma^{(1)}_{a_1}=\pi_1=\id$ and $\sigma^{(i)}_{1}=\ldots=\sigma^{(i)}_{a_i}=\pi_i$ for some $\pi_i\in S_{d+1}$ for all $i>1$. Then $v$ is a $G$-exchangeable maximizer of $\F$, and is explicitly given as $v_{(j\ldots j),(\pi_2(j)\ldots \pi_2(j)),\ldots,(\pi_k(j)\ldots \pi_k(j))}=\frac{1}{d+1}$ for any choice of $j\in \{0,\ldots,d\}$ and $0$ otherwise. The image of this maximizer under $\gamma$ is then $w=\frac{1}{d+1}\sum_{j=0}^d \delta_{\bigtimes_{i=1}^k{a_ie_{\pi_i(j)}}}$, where $\bigtimes$ denotes the Cartesian product of vectors in $\RR^{d+1}$. By Theorem \ref{thm:juricek-general}, $w$ is a maximizer of $\B^{(d)}_{a_1\ldots a_k}$. There are exactly $[(d+1)!]^{k-1}$ such maximizers: one for every choice of $(\pi_2,\ldots,\pi_k)\in S_{d+1}\times \cdots \times S_{d+1}$. Note also that for each such maximizer $w$, we have $Aw=((\frac{a_1}{d+1},\ldots,\frac{a_1}{d+1}),\ldots,(\frac{a_k}{d+1},\ldots,\frac{a_k}{d+1}))$, so all of them are the vertices of the logarithmic Voronoi polytope at the point $q\in\B^{(d)}_{a_1,\ldots, a_k}$ corresponding to the centroid of $\conv(A)$. 

We claim that there are no other maximizers of $\B^{(d)}_{a_1\ldots a_k}$. Indeed, if $a_i>2$ for all $i\in[k]$, then there are no other $G$-exchangeable maximizers of $\F$ by the proof of Theorem \ref{thm:twisted-veronese-max-divergence}. Indeed, if $a_1=a_2=1$ and $k=2$, then all maximizers are of the form discussed in the previous paragraph. If $a_i=2$ for some $i\in[k]$, without loss of generality assume that $a_1=2$ and that $v$ is a $G$-exchangeable maximizer of $\F$ with $\pi(j)=\sigma^{(1)}_2(j)\neq j$ for some $j$. If $v$ is $G$-exchangeable, it has to be the case that there are some values $j_3,\ldots,j_{a_1+\ldots+a_k}$ such that both $(j,\pi(j),j_3,\ldots,j_{a_1+\ldots+a_k})$ and $(\pi(j),j,j_3,\ldots,j_{a_1+\ldots+a_k})$ are in $\supp(v)$. But then $j_{3}=\sigma^{(2)}_1(j)=\sigma^{(2)}_1(\pi(j))$, a contradiction to the injectivity of $\pi$. Hence, there are no other maximizers.
\end{proof}

When $d=1$, the maximizers of the box model $\B^{(1)}_{a_1,\ldots,a_k}$ have the following nice geometric interpretation. 

\begin{cor}
    The maximum divergence from the box model $\B^{(1)}_{a_1,\ldots,a_k}$ equals $(a_1+\ldots+a_k-1)\log 2$. The maximum divergence is achieved at $2^{k-1}$ vertices of the unique logarithmic Voronoi polytope. These vertices correspond to the main diagonals of $\conv(A)$.
\end{cor}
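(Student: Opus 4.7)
The plan is to obtain this as an essentially immediate specialization of Theorem \ref{thm:box-max-divergence} to $d=1$, with the only substantive task being to verify the geometric interpretation of the maximizers as main diagonals of $\conv(A)$. Setting $d=1$ in Theorem \ref{thm:box-max-divergence} gives the divergence value $(a_1+\cdots+a_k-1)\log 2$ and the count $[(1+1)!]^{k-1} = 2^{k-1}$ of maximizers immediately, so the only thing left to prove is the diagonal correspondence.

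To unpack the geometry: when $d=1$, the columns of $A$ are the nonnegative integer tuples $((j_{10},j_{11}),\ldots,(j_{k0},j_{k1}))$ with $j_{i0}+j_{i1}=a_i$ for each $i$, so that $\conv(A)$ factors as a product of $k$ line segments, i.e., a $k$-dimensional box with $2^k$ vertices of the form $(a_1 e_{\epsilon_1},\ldots,a_k e_{\epsilon_k})$ for $(\epsilon_1,\ldots,\epsilon_k) \in \{0,1\}^k$. From the proof of Theorem \ref{thm:box-max-divergence}, each maximizer $w$ has the explicit form $w = \tfrac{1}{2}\sum_{j=0}^{1}\delta_{\bigtimes_{i=1}^k a_i e_{\pi_i(j)}}$, where $\pi_1 = \id$ and each $\pi_i \in S_2$ is arbitrary for $i \geq 2$.

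The key observation is that, since each $\pi_i$ is an involution, the two support points of $w$ (indexed by $j=0$ and $j=1$) are precisely the pair $(a_1 e_{\epsilon_1},\ldots,a_k e_{\epsilon_k})$ and $(a_1 e_{1-\epsilon_1},\ldots,a_k e_{1-\epsilon_k})$, where $\epsilon_i = \pi_i(0)$. These are antipodal vertices of the box $\conv(A)$, so their convex hull is a main diagonal. Conversely, every main diagonal arises this way, and the normalization $\pi_1 = \id$ simply records the fact that unordered pairs of antipodal vertices are counted once, matching the $2^{k-1} = 2^k/2$ diagonals. This establishes the claimed bijection between maximizers and main diagonals, completing the proof.

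I do not expect any substantive obstacle: the content is bookkeeping on top of the already-proved Theorem \ref{thm:box-max-divergence}, and the only care required is making the $d=1$ polytope explicit as a product of segments so that "main diagonal" has the usual meaning.
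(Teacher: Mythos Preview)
Your proposal is correct and matches the paper's approach: the paper states this corollary without proof, treating it as an immediate specialization of Theorem~\ref{thm:box-max-divergence} to $d=1$, and your unpacking of the maximizers as antipodal vertex pairs is exactly the intended reading of ``main diagonals.''
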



\begin{example}[$d=1,k=3$]
Consider the box model $\B^{(1)}_{3,3,2}$. It is a $3$-dimensional model inside~$\Delta_{47}$. The columns of the corresponding matrix $A$ can be identified with the lattice points $\{(i,j,k)\in\ZZ^3: 0\leq i,j\leq 3, 0\leq k\leq 2 \}$. The maximum divergence of this model is $7\log 2$ and it is achieved at four vertices of the logarithmic Voronoi polytope $Q_b$  corresponding to the point $b=(3/2,3/2,1)$ in $\conv(A)$. This is illustrated in Figure \ref{fig:box-model-332}.

\begin{figure}[H]
    \centering
    \begin{tikzpicture}[scale=1.1]
\draw[gray, thick] (0,0,0) -- (0,3,0);
\draw[gray, thick] (0,0,0) -- (3,0,0);
\draw[gray, thick] (0,0,0) -- (0,0,2);
\draw[gray, thick] (3,0,2) -- (3,3,2);
\draw[gray, thick] (3,0,2) -- (0,0,2);
\draw[gray, thick] (3,0,2) -- (3,0,0);
\draw[gray, thick] (3,3,2) -- (0,3,2);
\draw[gray, thick] (3,3,2) -- (3,3,0);
\draw[gray, thick] (0,0,2) -- (0,3,2);
\draw[gray, thick] (0,3,0) -- (0,3,2);
\draw[gray, thick] (0,3,0) -- (3,3,0);
\draw[gray, thick] (3,0,0) -- (3,3,0);

\filldraw[black] (0,0,0) circle (1 pt) node[anchor=north] {\scriptsize{$(0,0,0)$}};
\filldraw[black] (3,0,0) circle (1 pt) node[anchor=west] {\scriptsize{$(3,0,0)$}};
\filldraw[black] (0,3,0) circle (1 pt) node[anchor=south] {\scriptsize{$(0,3,0)$}};
\filldraw[black] (0,0,2) circle (1 pt) node[anchor=north] {\scriptsize{$(0,0,2)$}};

\draw[cyan, thin] (0,0,0) -- (3,3,2);
\draw[cyan, thin] (3,0,0) -- (0,3,2);
\draw[cyan, thin] (0,3,0) -- (3,0,2);
\draw[cyan, thin] (0,0,2) -- (3,3,0);

\filldraw[red] (3/2,3/2,1) circle (1.5pt) node[anchor=south] {\scriptsize{$b$}};

\end{tikzpicture}

    \caption{Chamber complex of the box model $\B^{(1)}_{3,3,2}$.}
    \label{fig:box-model-332}
\end{figure}
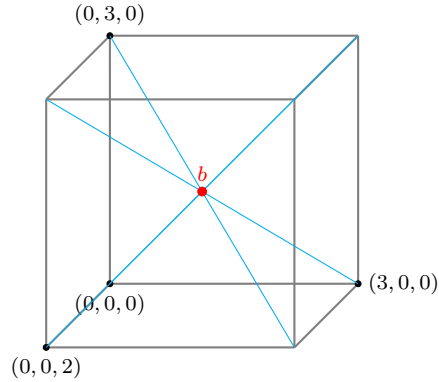

The four vertices giving $D(\B^{(1)}_{3,3,2})$ are supported on the main diagonals of $\conv(A)$. Explicitly, the four maximizers are 
\begin{align*}
&w_1=\frac{1}{2}\left(\delta_{(0,0,0)}+\delta_{(3,3,2)}\right),\;\;\; w_2=\frac{1}{2}\left(\delta_{(0,0,2)}+\delta_{(3,3,0)}\right),\;\;\;\\
&w_3=\frac{1}{2}\left(\delta_{(0,3,0)}+\delta_{(3,0,2)}\right),\;\;\;  w_4=\frac{1}{2}\left(\delta_{(3,0,0)}+\delta_{(0,3,2)}\right).\\
\end{align*}

\end{example}
\subsection{Trapezoid   model}
One interesting extension of the box model $\B_{a_1,a_2}^{(1)}$ is the trapezoid model, which we discuss in this section. It also has ML degree one \cite{Davies_2022}. Fix some positive integers $a,b,d$. Suppose we have two coins, with the probabilities of flipping heads being $s$ and $t$, respectively. First, we flip the second coin $b$ times, and record the number of heads  $j\in\{0,\ldots,b\}$. Then we flip the first coin $a$ times and record the number of heads, and then flip the first coin again $d(b-j)$ times and record the number of heads. The probability $p_{r,j}$ of getting exactly $r$ heads from the first coin and exactly $j$ heads from the second coin is then \begin{align}\label{eq:reparam-trapezoid}
p_{r,j}=c_{r,j}s^{r}(1-s)^{a+d(b-j)-r}t^j(1-t)^{b-j}
\end{align}
where $$c_{r,j}=\binom{b}{j}\sum_{\substack{0\leq i\leq a\\0\leq k\leq d(b-j)\\ i+k=r}} \binom{a}{i}\binom{d(b-j)}{k}.$$
Geometrically, this model is given by all lattice points inside the trapezoid with the vertices $\{(0,0),(0,b),(a,b),(a+db,0)\}$ with the weight of each point $(r,j)$ given by $c_{r,j}$. Hence we will call this model the \textit{trapezoid model} and denote it by $\T_{a,b,d}$. This is a 2-dimensional model inside $\Delta_{n-1}$, parametrized by
$(s,t)\mapsto (p_{r,j}: 0\leq j\leq b, 0 \leq r\leq a+d(b-j))$ where $n=\sum_{j=0}^b\sum_{r=0}^{a+d(b-j)}r$.

The MLE for any data point $u\in\Delta_n$ is a rational function of $u$. If $Au=(1,b_1,b_2)$, the MLE of $u$ is the point $q\in\T_{a,b,d}$ such that $Aq=(1,b_1,b_2)$. This point is given as 
$$q_{r,j}=c_{r,j}\left(\frac{b_1}{a+d(b-b_2)}\right)^r\left(1-\frac{b_1}{a+d(b-b_2)}\right)^{a+d(b-j)-r}\left(\frac{b_2}{b}\right)^j\left(1-\frac{b_2}{b}\right)^{b-j}.$$

\begin{example}[$a=b=d=1$]\label{ex:trapezoid111}
Consider the simplest nontrivial trapezoid model with $a=b=d=1$. It is a 2-dimensional toric model in $\Delta_4$ where
$$A=\begin{pmatrix}1&1&1&1&1\\0&0&1&1&2\\0&1&0&1&0\end{pmatrix}$$
and the chamber complex is shown in the middle of Figure \ref{fig:trapezoid-example}. Note that two-dimensional chambers will not contribute any projection points by Proposition \ref{prop:support-reasons-dimension}. There are only three interior vertices: $(1/2,1/2),(2/3,2/3)$ and $(1,1/2)$. All of them have triangles for logarithmic Voronoi polytopes, with supports $\{14,25,234\}, \{23,14,125\}$, and $\{34,25,145\}$, respectively. Therefore, all potential projection points will come from the edges. Running our algorithm on the ten interior edges, we find that there are exactly four projection vertices, corresponding to two different points on the model $\T_{1,1,1}$. The first point maps to $b_1=(2\sqrt{5}/5,\;-\sqrt{5}/5+1)\in\conv(A)$, and the two projection vertices are 
$$\left(\sqrt{5}/5, 0, 0, -\sqrt{5}/5 + 1, 0\right)\text{ and }
\left(0, -\sqrt{5}/5 + 1, 3\sqrt{5}/5- 1, 0, -2\sqrt{5}/5 + 1\right).$$
The latter vertex yields the maximum divergence $\log 2 + \log\left(\frac{1}{3-\sqrt{5}}\right)$. Similarly, the second point on the model maps to $b_2=(-\sqrt{5}/5+1,-\sqrt{5}/5+1)$, and the two projection vertices are 
$$\left(0, \sqrt{5}/5 + 1, 0, 0, \sqrt{5}/5\right) \text{ and }
\left(-2\sqrt{5}/5 + 1, 0, 3\sqrt{5}/5 - 1, -\sqrt{5}/5 + 1, 0\right).$$
The former vertex yields the same maximum divergence $\log 2 + \log\left(\frac{1}{3-\sqrt{5}}\right)$. 

The logarithmic Voronoi polytopes corresponding to both $b_1$ and $b_2$ are quadrilaterals. After projecting onto two-dimensional planes, we plot both polytopes in Figure \ref{fig:trapezoid-example}.

\end{example}
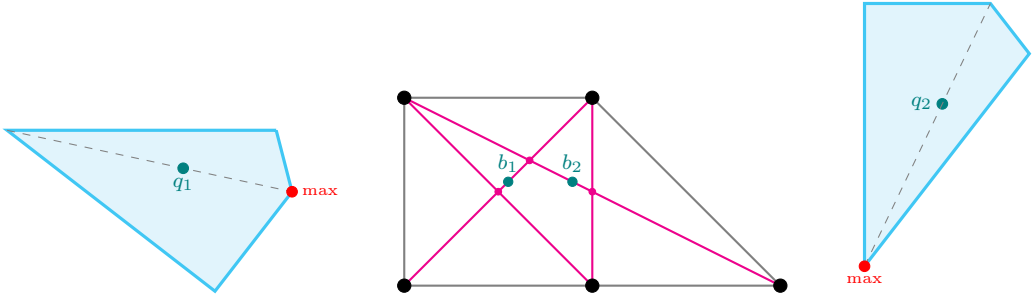
\begin{figure}[H]
    \centering
    \begin{tikzpicture}[scale=4]
\draw[color=cyan!60,fill=cyan!10, very thick] (0.39442719099991597, 0.1563582269576575) -- (-0.5, 0.15635822695765744) --(0.19098300562505255, -0.37887490770197735)  -- (0.4472135954999579, -0.048082638577173556) --(0.39442719099991597, 0.1563582269576575);
\draw[gray, dashed] (-0.5, 0.15635822695765744)--(0.4472135954999579, -0.048082638577173556);
\filldraw[teal] (0.08541019662496842, 0.030006823367684902) circle (0.5pt) node[anchor=north] {\scriptsize{$q_1$}};
\filldraw[red] (0.4472135954999579, -0.048082638577173556) circle (0.5pt)node[anchor=west] {\tiny max};
\end{tikzpicture}\;\;\;\;\;
\begin{tikzpicture}[scale=2.5]
\draw[gray, thick] (0,0) -- (0,1);
\draw[gray, thick] (0,0) -- (1,0);
\draw[gray, thick] (1,0) -- (2,0);
\draw[gray, thick] (0,1) -- (1,1);
\draw[gray, thick] (1,1) -- (2,0);
\draw[magenta, thick] (0,0) -- (1,1);
\draw[magenta, thick] (0,1) -- (1,0);
\draw[magenta, thick] (0,1) -- (2,0);
\draw[magenta, thick] (1,1) -- (1,0);
\filldraw[black] (0,0) circle (1pt);
\filldraw[black] (0,1) circle (1pt);
\filldraw[black] (1,0) circle (1pt);
\filldraw[black] (2,0) circle (1pt);
\filldraw[black] (1,1) circle (1pt);
\filldraw[magenta] (2/3,2/3) circle (0.5pt);
\filldraw[magenta] (1/2,1/2) circle (0.5pt);
\filldraw[magenta] (1,1/2) circle (0.5pt);
\filldraw[teal] (0.894427191, 0.5527864045) circle (0.7pt) node[anchor=south] {\scriptsize{$b_2$}};
\filldraw[teal] (0.5527864045, 0.5527864045) circle (0.7pt) node[anchor=south] {\scriptsize{$b_1$}};
\end{tikzpicture}  \;\;\;\;\;\vspace{5pt}\begin{tikzpicture}[scale=4]
\draw[color=cyan!60,fill=cyan!10, very thick] (-0.18257418578161505, -0.49103417581798225)-- (0.3651483716701109, 0.21607260540996018)--(0.2358486152762385, 0.3829978731112827)--(-0.18257418584767093, 0.38299787307966)--(-0.18257418578161505, -0.49103417581798225); 
\filldraw[teal] (0.07602532694007387, 0.049147337677014624) circle (0.5pt) node[anchor=east] {\scriptsize{$q_2$}};
\draw[gray, dashed] (-0.18257418578161505, -0.49103417581798225)--(0.2358486152762385, 0.3829978731112827);
\filldraw[red] (-0.18257418578161505, -0.49103417581798225) circle (0.5pt)node[anchor=north] {\tiny max};

\end{tikzpicture}
    \caption{The chamber complex and two logarithmic Voronoi polytopes that yield maximum~divergence.}
    \label{fig:trapezoid-example}
\end{figure}

\begin{theorem}\label{thm:trapezoid-upper-bound}
    The divergence to the trapezoid model $\T_{a,b,d}$ is bounded above by~$(a+bd+b)\log2$.
\end{theorem}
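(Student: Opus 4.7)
The plan is to exhibit a single explicit point $q^\ast \in \T_{a,b,d}$ with full support and then invoke the elementary pointwise bound
\begin{equation*}
D(p \parallel q) \;\leq\; -\log \min_i q_i,
\end{equation*}
valid whenever $\mathrm{supp}(p)\subseteq\mathrm{supp}(q)$. This follows from the identity $D(p\parallel q) = -H(p) - \sum_i p_i \log q_i$, combined with $H(p) \geq 0$ and $-\log q_i \leq -\log \min_j q_j$. Since $D_{\T_{a,b,d}}(p) \leq D(p \parallel q^\ast)$ for every $p \in \Delta_{n-1}$, the problem reduces to computing $\min_{(r,j)} q^\ast_{r,j}$ for a well-chosen~$q^\ast$.

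The natural symmetric choice is the point $q^\ast \in \T_{a,b,d}$ obtained by setting the coin biases $s = t = 1/2$. Noting that Vandermonde's identity collapses the definition of $c_{r,j}$ to $c_{r,j} = \binom{b}{j}\binom{a+d(b-j)}{r}$ and substituting into~\eqref{eq:reparam-trapezoid}, we obtain
\begin{equation*}
q^\ast_{r,j} \;=\; \binom{b}{j}\binom{a+d(b-j)}{r}\,2^{-(a+d(b-j)+b)}.
\end{equation*}
For fixed $j$, the factor $\binom{a+d(b-j)}{r}\,2^{-(a+d(b-j))}$ is the probability mass function of a $\mathrm{Binomial}(a+d(b-j), 1/2)$ distribution, whose minimum over $r$ is attained at $r \in \{0,\, a+d(b-j)\}$ and equals $2^{-(a+d(b-j))}$. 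Hence $\min_r q^\ast_{r,j} = \binom{b}{j}\,2^{dj-(a+bd+b)}$. Varying $j$: at $j=0$ this equals $2^{-(a+bd+b)}$, while for $j \geq 1$ we have $\binom{b}{j}\,2^{dj} \geq b\cdot 2^d \geq 2 > 1$ since $b, d \geq 1$, so the overall minimum is achieved at $j=0$. Therefore $\min_{(r,j)} q^\ast_{r,j} = 2^{-(a+bd+b)}$, attained at $(r,j)=(0,0)$ and $(a+db, 0)$.

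Combining these pieces, for every $p \in \Delta_{n-1}$
\begin{equation*}
D_{\T_{a,b,d}}(p) \;\leq\; D(p \parallel q^\ast) \;\leq\; -\log 2^{-(a+bd+b)} \;=\; (a+bd+b)\log 2,
\end{equation*}
which gives the claimed bound. No real obstacle arises here: the entire content lies in choosing a point of $\T_{a,b,d}$ whose smallest coordinate is as large as possible among easily parametrizable members of the model, and the symmetric choice $s=t=1/2$ accomplishes this by rendering each conditional binomial as flat as possible. As Example~\ref{ex:trapezoid111} already demonstrates, this bound is generally not tight, leaving room for a sharper analysis via the chamber-complex algorithm of Section~\ref{sec:chamber-complex}.
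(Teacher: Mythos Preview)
Your proof is correct and reaches the same bound, but by a genuinely different and more elementary route than the paper's argument. The paper computes the MLE $\hat q$ of an arbitrary point $u$ explicitly, writes $D(u\parallel \hat q)$ as
\[
-H(u)-\sum u_{r,j}\log c_{r,j}\;+\;\Bigl(\sum u_{r,j}(a+d(b-j))\Bigr)h(\hat s)\;+\;b\,h(\hat t),
\]
observes that the first block is nonpositive, and then bounds the remaining terms via $h(\cdot)\le\log 2$. Your approach bypasses the MLE entirely: you fix the single symmetric point $q^\ast$ at $s=t=\tfrac12$, compute $\min_{(r,j)} q^\ast_{r,j}=2^{-(a+bd+b)}$, and invoke $D_\M(p)\le D(p\parallel q^\ast)\le -\log\min_i q^\ast_i$. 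This is shorter and requires no calculus; the paper's route, on the other hand, yields the intermediate inequality $D(u\parallel\T_{a,b,d})\le (a+db+b)\log 2 - d\sum u_{r,j}j$, which is strictly sharper for any $u$ with mass at $j>0$ and hints at why the bound should not be tight.

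One minor slip: your claim that $\binom{b}{j}2^{dj}\ge b\cdot 2^d$ for all $j\ge 1$ fails at $j=b$ (where $\binom{b}{b}=1$). This does not affect the argument, since all you actually need is $\binom{b}{j}2^{dj}\ge 1$ for $j\ge 1$, which is immediate.
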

\begin{proof}
Fix a model $\T_{a,b,d}$ in $\Delta_{n-1}$, parametrized by $s$ and $t$. Let $u=(u_{r,j})\in\Delta_{n-1}$ be a general data vector. Then the log-likelihood function is
\begin{align*}
    \ell_u(p)=\sum u_{r,j} \log(p_{r,j})
    =\sum u_{r,j} \log c_{r,j}&+ \sum u_{r,j} [r \log s + (a+d(b-j)-r)\log (1-s)]\\
    &+\sum u_{r,j} [j\log(t)+(b-j)\log(1-t)].
\end{align*}
Taking the partial derivatives and solving for the parameters, we get that
$\hat{s}=\frac{\sum u_{r,j}r} {\sum u_{r,j}(a+d(b-j))}$ and $\hat{t}=\frac{\sum u_{r,j}j}{b}$. The MLE $q$ is obtained by plugging in these parameters into (\ref{eq:reparam-trapezoid}). Hence, the divergence function from the general point $u$ to the model $\T_{a,b,d}$ is
\begin{align*}
    D(u||\T_{a,b,d})&=\sum u_{r,j}\log(u_{r,j}/q_{r,j})
    =\underbrace{-H(u)-\sum u_{r,j }\log(c_{r,j})}_{\leq 0}\\
    &-\sum u_{r,j}r\log(\hat{s})-\sum u_{r,j}(a+(b-j)-r)\log(1-\hat{s})\\
    & -\sum u_{r,j}j\log(\hat{t})-\sum u_{r,j}(b-j)\log(1-\hat{t}),
\end{align*}
where $H(u)=-\sum_{i=1}^nu_i\log(u_i)$ is the entropy. Let $h(p)$ denote the entropy of a binary random variable with the probability of success $p$, i.e. $h(p)=-p\log(p)-(1-p)\log(1-p)$. Note that $h(p)$ always attains its maximum value at $p=1/2$. Therefore, we have
\begin{align*}
     D(u||\T_{a,b,d})&\leq \left(\sum u_{r,j}(a+d(b-j))\right)h(\hat{s})+ b\,h(\hat{t})\\
      &\leq  \left(\sum u_{r,j}(a+d(b-j)+b)\right)h(1/2)\\
    &=(a+db+b)\log 2 -d\sum u_{r,j}j\\
    &\leq(a+db+b)\log 2,
\end{align*}
as desired.
\end{proof}
Note that the polytope of the trapezoid model $\T_{a,b,d}$ sits in-between the polytopes of the two box models $\B^{(1)}_{a,b}$ and $\B^{(1)}_{a+db,b}$. However, the weights assigned to the lattice points are different. This presents the question of whether $D(\T_{a,b,d})$ is bounded by $D(\B^{(1)}_{a,b})$ and $D(\B^{(1)}_{a+db,b})$ below and above, respectively. Note that this is indeed the case for Example \ref{ex:trapezoid111}. We present the following conjecture.

\begin{conj}
    The divergence from the trapezoid model $\T_{a,b,d}$ is at least $(a+b-1)\log2$ and at most $(a+bd+b-1)\log2$. This upper bound is sharp if and only if $d=0$.
\end{conj}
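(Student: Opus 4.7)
My plan is to establish the lower and upper bounds separately, and then read off sharpness. Throughout I will use the identity derived in the proof of Theorem~\ref{thm:trapezoid-upper-bound},
\[
D_{\T_{a,b,d}}(u) \;=\; -H(u) - \sum_{r,j} u_{r,j}\log c_{r,j} + \bigl(a+d(b-\overline{j})\bigr)h(\hat{s}) + b\,h(\hat{t}),
\]
where $\overline{j}=b\hat t$ and $h$ is the binary entropy.

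For the lower bound, I would exhibit a concrete distribution achieving divergence at least $(a+b-1)\log 2$. Motivated by the diagonal maximizers of the box model $\B_{a,b}^{(1)}$, the natural candidate is $u = \tfrac{1}{2}\delta_{(0,0)} + \tfrac{1}{2}\delta_{(a,b)}$. A direct computation using the rational MLE gives $\hat{t}=1/2$, $\hat{s}=a/(2a+db)$, $H(u)=\log 2$, and $c_{0,0}=c_{a,b}=1$, so the identity above simplifies to
\[
D_{\T_{a,b,d}}(u) = (b-1)\log 2 + \tfrac{1}{2}\,g(d), \qquad g(d) := (2a+db)\log(2a+db) - (a+db)\log(a+db) - a\log a.
\]
Then $g(0)=2a\log 2$ and $g'(d) = b\log\bigl((2a+db)/(a+db)\bigr) \geq 0$ with strict inequality whenever $a,d>0$, which yields $D(\T_{a,b,d}) \geq (a+b-1)\log 2$, with equality if and only if $d=0$.

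For the upper bound, I would sharpen the argument of Theorem~\ref{thm:trapezoid-upper-bound}. Bounding $h(\hat s)\leq\log 2$ and maximizing the univariate function $\varphi(\hat{t}) := b\,h(\hat{t})-db\hat{t}\log 2$ over $[0,1]$ yields $\varphi(\hat{t}^*)=b\log(1+2^{-d})$ at $\hat{t}^* = 1/(1+2^d)$, giving
\[
D_{\T_{a,b,d}}(u) \;\le\; (a+db)\log 2 + b\log(1+2^{-d}) - H(u) - \sum_{r,j} u_{r,j}\log c_{r,j}.
\]
The parameter-only inequality $b\log(1+2^{-d})\leq (b-1)\log 2$ is equivalent to $(1+2^{-d})^b\leq 2^{b-1}$, which covers $d\geq 2,\,b\geq 2$ and $d=1,\,b\geq 3$; in this regime the conjectured bound is immediate. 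For the remaining small-parameter cases the cross-entropy term must absorb the deficit. I would split by the distance of $\hat{t}$ from $\hat{t}^*$: away from $\hat{t}^*$, a quantitative strict concavity estimate for $h$ replaces $\log 2$ by a strictly smaller quantity and supplies the missing $\log 2$; near $\hat{t}^*$, the distribution $u$ is forced to place non-negligible mass on states with $c_{r,j}>1$ (since $\hat{t}^*\in(0,1)$ rules out being concentrated at a corner), making $-H(u)-\sum u_{r,j}\log c_{r,j}$ strictly negative by a uniform amount. Patching the two regimes uniformly in $(a,b,d)$ is the main obstacle.

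Sharpness in the direction $d=0$ is immediate: the weights $c_{r,j}=\binom{a}{r}\binom{b}{j}$ identify $\T_{a,b,0}$ with $\B_{a,b}^{(1)}$, whose maximum divergence is $(a+b-1)\log 2$ by the corollary to Theorem~\ref{thm:box-max-divergence}, matching the upper bound evaluated at $d=0$. Conversely, for $d\geq 1$ both the concavity gap and the cross-entropy gap in the two-regime analysis are strictly positive, yielding $D(\T_{a,b,d}) < (a+bd+b-1)\log 2$ and completing the sharpness characterization.
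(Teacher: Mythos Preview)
This statement is labeled a \emph{Conjecture} in the paper; the paper offers no proof, only the weaker upper bound $(a+bd+b)\log 2$ of Theorem~\ref{thm:trapezoid-upper-bound}. So there is nothing to compare against, and the question is simply whether your argument settles the conjecture.

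Your lower bound is correct and complete. The test point $u=\tfrac12\delta_{(0,0)}+\tfrac12\delta_{(a,b)}$ indeed has $c_{0,0}=c_{a,b}=1$, the divergence identity reduces exactly as you wrote, and the monotonicity of $g$ gives $D(\T_{a,b,d})\ge (a+b-1)\log 2$, with strict inequality for $d\ge 1$. This part is genuine progress beyond what the paper proves.

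The upper bound, however, has a real gap, which you yourself flag. The parameter-only inequality $(1+2^{-d})^b\le 2^{b-1}$ fails for \emph{every} $d$ when $b=1$, and for $d=1,\,b=2$; these are not isolated corner cases but entire infinite families. Your proposed two-regime patch relies on the claim that near $\hat t^*$ the distribution $u$ must place non-negligible mass on states with $c_{r,j}>1$. That claim is false: supporting $u$ on the four corners $(0,0),(a+db,0),(0,b),(a,b)$ of the trapezoid gives $c_{r,j}=1$ at every support point while allowing $\hat t$ to take any value in $[0,1]$, in particular $\hat t=\hat t^*$. So the weight term $-\sum u_{r,j}\log c_{r,j}$ contributes nothing there, and the whole deficit must come from $H(u)$ together with the slack in $h(\hat s)\le \log 2$. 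You have not quantified either contribution, and ``patching the two regimes uniformly'' is precisely the missing argument, not a technicality. Consequently the ``conversely'' part of your sharpness claim for $d\ge 1$ is also unproven, since it rests on the strict upper bound. The identification $\T_{a,b,0}=\B^{(1)}_{a,b}$ and the resulting sharpness at $d=0$ are fine.
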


In \cite{Davies_2022}, the authors present several families of 3-dimensional models that have ML degree one. We compute the maximum divergence for the simplest nontrivial examples in those families in the table below. 
\begin{center}
\begin{tabular}{|c|c|c|c|}
\hline
    \textbf{polytope} & \textbf{$\conv(A)$} & \textbf{$D(\M)$} & \textbf{notes} \\
\hline
\begin{minipage}{0.15\textwidth}\begin{tikzpicture}[every path/.style={>=latex},every node/.style={draw,circle,fill=black,scale=0.5},scale=0.6]
\begin{scope}[scale=3]
\draw (1.1,0.75) node[draw,circle,inner sep=2pt,fill](5){};
    \draw (0,0.75) node[draw,circle,inner sep=2pt,fill](4){};
    \draw (0,1.25) node[draw,circle,inner sep=2pt,fill](6){};
    \draw (0.25,1.35) node[draw,circle,inner sep=2pt,fill](7){};
     \draw (0.3,1.25) node[draw,circle,inner sep=2pt,fill](9){};
  \draw (0.33,1) node[draw,circle,inner sep=2pt,fill](1){};
  \draw (0.35,0.75) node[draw,circle,inner sep=2pt,fill](3){};
  \draw (0.8,1) node[draw,circle,inner sep=2pt,fill](2){};
  \draw[-,dotted] (1) edge[thick] (4);
  \draw[-] (4) edge[thick] (3);
  \draw[-,dotted] (2) edge[thick] (1);
  \draw[-] (3) edge[thick] (5);
  \draw[-] (2) edge[thick] (5);
  \draw[-,dotted] (1) edge[thick] (7);
  \draw[-] (2) edge[thick] (7);
  \draw[-] (4) edge[thick] (6);
  \draw[-] (5) edge[thick] (9);
   \draw[-] (6) edge[thick] (7);
  \draw[-] (7) edge[thick] (9);
  \draw[-] (6) edge[thick] (9);
  \end{scope}
 \end{tikzpicture} \end{minipage}& $\substack{\conv\{(0,0,0),(0,0,1),(0,1,0),\\(0,1,1),(1,1,0),(2,0,0)\}}$ & $2\log 2$ & conjectured\\
 \hline
\begin{minipage}{0.15\textwidth}\begin{tikzpicture}[every path/.style={>=latex},every node/.style={draw,circle,fill=black,scale=0.5}, scale=0.6]
\begin{scope}[scale=3]
\draw (1.2,0.75) node[draw,circle,inner sep=2pt,fill](5){};
    \draw (0,0.75) node[draw,circle,inner sep=2pt,fill](4){};
    \draw (0,1.25) node[draw,circle,inner sep=2pt,fill](6){};
  \draw (0.3,1) node[draw,circle,inner sep=2pt,fill](1){};
  \draw (0.6,0.75) node[draw,circle,inner sep=2pt,fill](3){};
  \draw (0.6,1) node[draw,circle,inner sep=2pt,fill](2){};
  \draw (0.5,1.25) node[draw,circle,inner sep=2pt,fill](7){};
  \draw[-,dotted] (1) edge[thick] (4);
  \draw[-] (4) edge[thick] (3);
  \draw[-,dotted] (2) edge[thick] (1);
  \draw[-] (3) edge[thick] (5);
  \draw[-,dotted] (2) edge[thick] (5);
  \draw[-,dotted] (1) edge[thick] (6);
  \draw[-,dotted] (2) edge[thick] (7);
  \draw[-] (6) edge[thick] (7);
  \draw[-] (4) edge[thick] (6);
  \draw[-] (5) edge[thick] (7);
  \end{scope}
 \end{tikzpicture}\end{minipage}& $\substack{\conv\{(0,0,0),(0,0,1),(0,1,0),\\(1,0,1),(1,1,0),(2,0,0)\}}$ & $\log 2 + \log\left(\frac{1}{3-\sqrt{5}}\right)$ &\\
 \hline
\begin{minipage}{0.15\textwidth}\begin{tikzpicture}[every path/.style={>=latex},every node/.style={draw,circle,fill=black,scale=0.5}, scale=0.6]
\begin{scope}[scale=3]
\draw (1.2,0.75) node[draw,circle,inner sep=2pt,fill](5){};
    \draw (0,0.75) node[draw,circle,inner sep=2pt,fill](4){};
    \draw (0,1.35) node[draw,circle,inner sep=2pt,fill](6){};
  \draw (0.15,1) node[draw,circle,inner sep=2pt,fill](1){};
  \draw (0.6,0.75) node[draw,circle,inner sep=2pt,fill](3){};
  \draw (0.55,1) node[draw,circle,inner sep=2pt,fill](2){};
  \draw[-,dotted] (1) edge[thick] (4);
  \draw[-] (4) edge[thick] (3);
  \draw[-,dotted] (2) edge[thick] (1);
  \draw[-] (3) edge[thick] (5);
  \draw[-,dotted] (2) edge[thick] (5);
  \draw[-,dotted] (1) edge[thick] (6);
  \draw[-,dotted] (2) edge[thick] (6);
  \draw[-] (3) edge[thick] (6);
  \draw[-] (4) edge[thick] (6);
  \draw[-] (5) edge[thick] (6);
  \end{scope}
 \end{tikzpicture}\end{minipage}& $\substack{\conv\{(0,0,0),(0,0,1),(0,1,0),\\ (1,1,0),(2,0,0)\}}$ & $\log 2 + \log\left(\frac{1}{3-\sqrt{5}}\right)$ & boundary\\
 \hline
 \begin{minipage}{0.15\textwidth}\begin{tikzpicture}[every path/.style={>=latex},every node/.style={draw,circle,fill=black,scale=0.5}, scale=0.6]
\begin{scope}[scale=3]
\draw (0.35,0.75) node[draw,circle,inner sep=2pt,fill](8){};
\draw (0.55,1) node[draw,circle,inner sep=2pt,fill](8){};
    \draw (0,0.75) node[draw,circle,inner sep=2pt,fill](4){};
    \draw (0,1.25) node[draw,circle,inner sep=2pt,fill](6){};
    \draw (0.2,1.35) node[draw,circle,inner sep=2pt,fill](7){};
  \draw (0.2,1) node[draw,circle,inner sep=2pt,fill](1){};
  \draw (0.7,0.75) node[draw,circle,inner sep=2pt,fill](3){};
  \draw (1,1) node[draw,circle,inner sep=2pt,fill](2){};
  \draw[-,dotted] (1) edge[thick] (4);
  \draw[-] (4) edge[thick] (3);
  \draw[-,dotted] (2) edge[thick] (1);
  \draw[-] (2) edge[thick] (3);
  \draw[-,dotted] (1) edge[thick] (7);
  \draw[-] (2) edge[thick] (7);
  \draw[-] (4) edge[thick] (6);
   \draw[-] (6) edge[thick] (7);
  \draw[-] (3) edge[thick] (6);
  \end{scope}
 \end{tikzpicture}\end{minipage}&$\substack{\conv\{(0,0,0),(0,0,1),(0,1,0),\\(0,1,1), (2,0,0),(2,1,0)\}}$ &$\log 3$ & conjectured \\
 \hline
 \begin{minipage}{0.15\textwidth}\begin{tikzpicture}[every path/.style={>=latex},every node/.style={draw,circle,fill=black,scale=0.5},scale=0.6]
\begin{scope}[scale=3]
    \draw (0,0.75) node[draw,circle,inner sep=2pt,fill](4){};
    \draw (0,1.25) node[draw,circle,inner sep=2pt,fill](6){};
  \draw (0.2,1) node[draw,circle,inner sep=2pt,fill](1){};
  \draw (0.8,0.75) node[draw,circle,inner sep=2pt,fill](3){};
  \draw (0.9,1) node[draw,circle,inner sep=2pt,fill](2){};
  \draw[-,dotted] (1) edge[thick] (4);
  \draw[-] (4) edge[thick] (3);
  \draw[-,dotted] (2) edge[thick] (1);
  \draw[-] (2) edge[thick] (3);
  \draw[-] (4) edge[thick] (6);
   \draw[-,dotted] (6) edge[thick] (1);
  \draw[-] (6) edge[thick] (2);
  \draw[-] (6) edge[thick] (3);
  \end{scope}
 \end{tikzpicture}\end{minipage}&$\substack{\conv\{(0,0,0),(0,0,1),(0,1,0),\\ (1,0,0),(1,1,0)\}}$ & $\log 2$& boundary\\
 \hline
 \begin{minipage}{0.15\textwidth}
\begin{tikzpicture}[every path/.style={>=latex},every node/.style={draw,circle,fill=black,scale=0.5},scale=0.6]
\begin{scope}[scale=3]
    \draw (0,0.75) node[draw,circle,inner sep=2pt,fill](4){};
    \draw (0,1.25) node[draw,circle,inner sep=2pt,fill](6){};
    \draw (0.3,1.5) node[draw,circle,inner sep=2pt,fill](7){};
     \draw (1,1.5) node[draw,circle,inner sep=2pt,fill](8){};
     \draw (0.7,1.25) node[draw,circle,inner sep=2pt,fill](9){};
  \draw (0.3,1) node[draw,circle,inner sep=2pt,fill](1){};
  \draw (0.7,0.75) node[draw,circle,inner sep=2pt,fill](3){};
  \draw (1,1) node[draw,circle,inner sep=2pt,fill](2){};
  \draw[-,dotted] (1) edge[thick] (4);
  \draw[-] (4) edge[thick] (3);
  \draw[-,dotted] (2) edge[thick] (1);
  \draw[-] (2) edge[thick] (3);
  \draw[-,dotted] (1) edge[thick] (7);
  \draw[-] (2) edge[thick] (8);
  \draw[-] (4) edge[thick] (6);
   \draw[-] (6) edge[thick] (7);
  \draw[-] (7) edge[thick] (8);
  \draw[-] (6) edge[thick] (9);
  \draw[-] (8) edge[thick] (9);
  \draw[-] (3) edge[thick] (9);
  \end{scope}
 \end{tikzpicture}\end{minipage}&$\substack{\conv\{(0,0,0),(0,0,1),(0,1,0),(1,0,0),\\ (0,0,1),(1,0,1),(1,1,0),(1,1,1)\}}$&$2\log 2$&box model\\
 \hline
 \begin{minipage}{0.15\textwidth}\begin{tikzpicture}[every path/.style={>=latex},every node/.style={draw,circle,fill=black,scale=0.5},scale=0.6]
\begin{scope}[scale=3]
    \draw (0,0.75) node[draw,circle,inner sep=2pt,fill](4){};
    \draw (0,1.25) node[draw,circle,inner sep=2pt,fill](6){};
    \draw (0.3,1.5) node[draw,circle,inner sep=2pt,fill](7){};
     \draw (0.7,1.25) node[draw,circle,inner sep=2pt,fill](9){};
  \draw (0.3,1) node[draw,circle,inner sep=2pt,fill](1){};
  \draw (0.7,0.75) node[draw,circle,inner sep=2pt,fill](3){};
  \draw[-,dotted] (1) edge[thick] (4);
  \draw[-] (4) edge[thick] (3);
  \draw[-,dotted] (1) edge[thick] (3);
  \draw[-,dotted] (1) edge[thick] (7);
  \draw[-] (4) edge[thick] (6);
   \draw[-] (6) edge[thick] (7);
  \draw[-] (7) edge[thick] (9);
  \draw[-] (6) edge[thick] (9);
  \draw[-] (3) edge[thick] (9);
  \end{scope}
 \end{tikzpicture}\end{minipage}&$\substack{\conv\{(0,0,0),(0,0,1),(0,1,0),\\(1,0,0),(0,0,1),(1,0,1)\}}$
 &$\log 2$&$2\times 3$ independence\\
  \hline
\end{tabular}
\end{center}
Interestingly, the second example in the table has infinitely many maximizers. For the third and the fifth models, all the maximizers of information divergence lie on the boundary of the simplex. For the conjectured examples, we were able to compute most of the ideals in step 3 of the algorithm, but not all. Some of the higher-dimensional ideals that arise in those cases are very complicated and we were not able to solve them using numerical tools. 

\section*{Acknowledgements} We wish to thank Guido Mont\'ufar for drawing our attention to the problem of maximizing information divergence and for encouraging us to study this problem from the perspective of logarithmic Voronoi polytopes. We are also grateful for the discussions we had and his valuable comments. Y.A. was partially supported by the National Science Foundation Graduate Research Fellowship under Grant No. DGE 2146752.

\bibliographystyle{plain}
\bibliography{references}
\end{document}